  \theoremstyle{plain}
  \newtheorem{thm}{Theorem}[section]
  \crefname{thm}{theorem}{theorems}
  \newtheorem{prop}[thm]{Proposition}
  \crefname{prop}{proposition}{propositions}
  \newtheorem{lemma}[thm]{Lemma}
  \crefname{lemma}{lemma}{lemmas}
  \newtheorem{coro}[thm]{Corollary}
  \crefname{coro}{corollary}{corollaries}
  \theoremstyle{definition}
  \newtheorem{defi}[thm]{Definition}
  \crefname{defi}{definition}{definitions}
  \newtheorem{example}[thm]{Example}
  \crefname{example}{example}{examples}
  \newtheorem{question}{Question}
  \crefname{question}{question}{Questions}
  \newtheorem{prob}{Problem}
  \crefname{prob}{problem}{problems}
  \theoremstyle{remark}
  \newtheorem{remark}[thm]{Remark}
  \crefname{remark}{remark}{remarks}
  \DeclareMathOperator{\id}{Id}
  \DeclareMathOperator{\ord}{order}
  \DeclareMathOperator{\aff}{AGL}
  \DeclareMathOperator{\aut}{Aut}
  \DeclareMathOperator{\inn}{Inn}
  \DeclareMathOperator{\psl}{PSL}
  \DeclareMathOperator{\gl}{GL}
  \DeclareMathOperator{\irr}{Irr}
  \DeclareMathOperator{\irrnt}{Irr^*}
  \DeclareMathOperator{\syl}{Syl}
  \DeclareMathOperator{\sym}{Sym}
  \DeclareMathOperator{\Z}{\mathbb{Z}}
  \def\bn{\mathbb{N}}
  \def\bz{\mathbb{Z}}
  \def\br{\mathbb{R}}
  \def\bc{\mathbb{C}}
  \def\ff{\mathbb{F}}
  \def\bh{\mathbb{H}}
  \def\bq{\mathbb{Q}}
  \def\qb{\mathcal{Q}_8}
  \def\trp{\mathbf{1}}
  \def\divides{\;\Big|\;}
  \def\ds{\displaystyle}
  \newcommand{\abs}[1]{\left\lvert#1\right\rvert}
  \newcommand{\norm}[1]{\left\lVert#1\right\rVert}
  \newcommand{\set}[1]{\left\{#1\right\}}
  \newcommand{\iprod}[2]{\left[{#1},{#2}\right]}
  \newcommand{\trep}[1]{\mathfrak{1}_{#1}}
  \newcommand{\sgnrep}[1]{\mathfrak{1}^-_{#1}}
  \newcommand{\indrep}[2]{{#1^#2}}
  \newcommand{\resrep}[2]{{{#1}_{#2}}}
  \newcommand{\sconj}[2]{{{#1}^{#2}}}
  \newcommand{\gconj}[2]{{{#1}^{({#2})}}}
  \newcommand{\subgind}[2]{\abs{{#1}:{#2}}}
  \newcolumntype{Y}{>{\raggedleft\arraybackslash$}X<{$}}
  \newcolumntype{C}{>{$}c<{$}}
  \newcolumntype{L}{>{$}l<{$}}
  \newcolumntype{R}{>{$}r<{$}}
  \newcommand\blfootnote[1]{%
    \begingroup
    \renewcommand\thefootnote{}\footnote{#1}%
    \addtocounter{footnote}{-1}%
    \endgroup
  }
  \newcommand{\thetitle}{On Some Applications of Group Representation Theory to Algebraic Problems Related to the Congruence Principle for Equivariant Maps}
  \date{}
  \title{\thetitle}
  \author[1]{Zalman Balanov\thanks{balanov@utdallas.edu}}
  \author[2]{Mikhail Muzychuk\thanks{misha.muzychuk@gmail.com}}
  \author[1]{Hao-pin Wu\thanks{hxw132130@utdallas.edu}}
  \affil[1]{Department of Mathematical Sciences,
      the University of Texas at Dallas,
      Richardson, Texas, USA 75080-3021}
  \affil[2]{Department of Computer Science and Mathematics,
      Netanya Academic College, Netanya, Israel 4223587}
\begin{document}
  \titlepage
  \maketitle

  \begin{abstract}
  Given a finite group $G$ and two unitary $G$-representations
  $V$ and $W$, possible restrictions on Brouwer degrees of
  equivariant maps between representation spheres $S(V)$ and $S(W)$
  are usually expressed in a form  of congruences modulo the
  greatest common divisor  of lengths of orbits in $S(V)$
  (denoted $\alpha(V)$).
  Effective applications of these congruences is limited by
  answers to the following questions:
  (i) under which conditions, is $\alpha(V)>1$?
  and (ii) does there exist an equivariant map with the degree
  easy to calculate?
  In the present paper, we address both questions.
  We show that $\alpha(V)>1$ for each irreducible non-trivial
  $\mathbb{C}[G]$-module if and only if $G$ is solvable.
  For non-solvable groups, we use $2$-transitive actions
  to construct complex representations with non-trivial
  $\alpha$-characteristic.
  Regarding the second question, we suggest a class
  of Norton algebras without 2-nilpotents giving
  rise to equivariant quadratic maps, which
  admit an explicit formula for the Brouwer degree.
  \end{abstract}

  \blfootnote{{\em 2010 AMS Mathematics Subject Classification:}
      15A69, 20B20, 20C15, 20D20, 55M25}
  \blfootnote{{\em Key Words:}
      Brouwer degree,
      equivariant map,
      ordinary representations of finite groups,
      solvable groups,
      doubly transtive groups,
      Norton algebras}
  \blfootnote{{\em Acknowledgement.}
      The authors are thankful to A.~Kushkuley for useful discussions.
      The first and third authors acknowledge the support from
      National Science Foundation through grant DMS-1413223.
      The second author acknowledge the support from
      Israeli Ministry of Absorption.}

  \section{Introduction}
  \subsection{Topological motivation}
  The methods based on the usage of Brouwer degree and
  its infinite dimensional generalizations are unavoidable
  in many mathematical areas which, at first glance,
  have nothing in common: (i) qualitative investigation of
  differential and integral equations arising in mathematical physics
  (existence, uniqueness, stability, bifurcation of solutions
  (see \cite{KrasnoselskiiZabreiko,Nirenberg,Deimling})), 
  (ii) combinatorics (equipartition of mass (see \cite{BlagoZig,Z})),
  geometry (harmonic maps between surfaces (see \cite{EelsWood,JostSchoen})),
  to mention a few.
  In short, given a continuous map $\Phi:M\to N$ of manifolds
  of the same dimension, the Brouwer degree $\deg(\Phi)$ is an integer 
  which can be considered as an algebraic count of solutions to equation
  $\Phi(x)=y$ for a given $y\in N$
  (for continuous functions from $\mathbb R$ to $\mathbb R$,
  the Brouwer degree theory can be traced to Bolzano-Cauchy
  Intermediate Value Theorem).

  In general, practical computation of the Brouwer degree is a
  problem of formidable complexity.
  However, if $\Phi$ respects some group symmetries of 
  $M$ and $N$ (expressed in terms of the so-called equivariance,
  see \Cref{subsec:groups_actions}), then the computation of
  $\deg(\Phi)$ lies in the interplay between topology and group 
  representation theory.
  Essentially, symmetries lead to restrictions on possible
  values of the degree.  These restrictions
  (typically formulated in a form of {\it congruencies})
  have been studied by many authors using different techniques
  (see, for example, \cite{Kras1955,Kushkuley-Balanov,
  Ize-Vignoli,Borisovich-Fomenko,Bartsch,Atiyah-Tall,tomDieck,
  Komiya,Dancer,Marzantowicz}
  and references therein (see also the survey \cite{Steinlein1})).
  The following statement (which is  
  a particular case of the so-called {\em congruence principle}
  established in \cite{Kushkuley-Balanov}, Theorems 2.1 and 3.1)
  is the starting point for our discussion.

  \medskip
  \noindent
  {\bf Congruence principle:}
  {\it Let $M$ be  a compact, connected, oriented,
  smooth $n$-dimensional manifold on which a finite group $G$ acts smoothly, 
  and let $W$ be an orthogonal $(n+1)$-dimensional
  $G$-representation. Denote by $\alpha(M)$
  the greatest common divisor of lengths of $G$-orbits occurring in $M$.
  Assume that there exists an equivariant map $\Phi:M\to W\setminus\set{0}$.} 
  {\it Then, for {\bf any} equivariant map}
  $\Psi:M\to W\setminus\set{0}$, {\it one has} 
  \begin{equation}\label{eq:congruence-principle-general}
    \deg(\Psi)\equiv\deg(\Phi)\pmod{\alpha(M)}.
  \end{equation}
 
  Clearly, the congruence principle contains a non-trivial
  information only if $\alpha(M)>1$
  (for example, if a non-trivial group $G$ acts freely on $M$,
  then $\alpha(M)=\abs{G}>1$).
  Also, the congruence principle can be effectively applied
  only if there exists a ``canonical" equivariant map
  $\Phi:M\to W\setminus\set{0}$ with $\deg(\Phi)$ easy to calculate
  (for example, if $M$ coincides (as a $G$-space)
  with the unit sphere $S(W)$ in $W$, 
  then one can take $\Phi:=\id$, in which case,
  for {\em any} equivariant map $\Psi:S(W)\to W\setminus\set{0})$,
  one has:
  $\deg(\Psi)\equiv\deg(\id)=1\pmod{\alpha(S(W))}$;
  in particular, $\deg(\Psi)\neq0$ provided $\alpha(S(W))>1$).
  This way, we arrive at the following two problems:

  \begin{prob}\label{prob:A}
    Under which conditions on $M$, is $\alpha(M)$ greater than 1? 
  \end{prob}

  \begin{prob}\label{prob:B}
    Under which conditions on $M$ and $W$,
    does there exist an equivariant map
    $\Phi:M\to W\setminus\set{0}$
    with $\deg(\Phi)$ easy to calculate?
  \end{prob}

  Assume, in addition, that $V$ is an orthogonal
  $G$-representation and $M=S(V)$
  (recall that $S(V)$ is called a $G$-representation sphere). 
  Then: (i) Problem A can be traced to the classical result of
  J.~Wolf \cite{Wolf} on classification of finite groups acting
  freely on a finite-dimensional sphere,
  (ii) both Problems A and B are intimately related to a
  classification of $G$-representations up to a certain
  (non-linear) equivalence (see \cite{Atiyah-Tall,tomDieck,Adams,LW}). 


  A study of numerical properties of orbit lengths of finite linear groups
  has a long history and can be traced back to H.~Zassenhaus \cite{Z}.
  A special attention was paid to studying
  regular orbits, orbits of coprime lengths, etc.,
  in the case of the ground field of positive characteristic
  (see \cite{GLPT} for a comprehensive account about the current
  research in this area).
  To the best of our knowledge,
  the case of zero characteristic was not as well studied
  as the one of positive characteristic.
  It seems that the invariant introduced in our paper
  (the $\alpha$-characteristic of a linear representation)
  has not been studied in detail before.
  
  
  The {\em goal} of this paper is to develop some algebraic
  techniques allowing one to study \Cref{prob:A,prob:B} for finite
  solvable and $2$-transitive groups.  We are focused on the
  situation when  $V$ and $W$ are complex unitary $G$-representations
  of the same dimension and $M=S(V)$ is a $G$-representation sphere
  (in this case, we set $\alpha(V)=\alpha(S(V))$ and call it
  $\alpha$-characteristic of $V$).
  However, some of our results 
  (see \Cref{coro:maps-manifold-solv}) are formulated for
  equivariant maps of $G$-manifolds.

  \subsection{Main results and overview} 
  \paragraph{(A)}
  If $V$ and $U$ are (complex unitary) $G$-representations, then
  $\alpha(U\oplus V)=\gcd\set{\alpha(U),\alpha(V)}$.
  This simple observation suggests to study \Cref{prob:A}
  first for $S(V)$, where $V$ is an irreducible representation.
  By combining the main result from \cite{KaplanLevy} with several
  group theoretical arguments, we obtain the following result:
  $G$ is solvable if and only if $\alpha(V)>1$ for
  {\em any} non-trivial irreducible $G$-representation
  (see \Cref{thm:solvable_groups}).
  Among many known characterizations of the class of
  (finite) solvable groups,
  we would like to refer to Theorem 3.7 from \cite{Bartsch}
  (where a concept of admissible representations is used)
  as the result close in spirit to ours.
  Also, if $G$ is nilpotent, we show that for any non-trivial
  irreducible $G$-representation, there exists an orbit 
  $G(x)$ in $S(V)$ such that $\abs{G/G_x}=\alpha(V)$
  (see \Cref{prop:nilpotent_realize_alpha}). 

  On the other hand, we discovered that a sporadic group
  (the Janko Group $J_1$ (see \cite{Janko}))
  satisfies the following property: {\em all}
  irreducible $J_1$-representations have the $\alpha$-characteristic
  equals $1$ (recall that $J_1$ is of order $175560$
  and admits $15$ irreducible representations).
  With these results in hand, we arrived at the following question:
  Given a (finite) non-solvable group $G$ different from $J_1$,
  does there exist an easy way to point out an
  irreducible $G$-representation $V$ with $\alpha(V)>1$?
  In this paper, we focus on the following setting:
  Given $H<G$, take the $G$-action on $G/H$ by left translations
  and denote by $V$ the augmentation submodule of the associated
  permutation $G$-representation $\mathbb C\oplus V$.
  It turns out that $\alpha(V)>1$ if and only if $|G/H|=q^{k}$,
  where $q$ is a prime (see \Cref{lemma:2trans_groups}).
  Combining this observation with the classification of
  $2$-transitive groups (see \cite{Cameron}, for example)
  allows us to completely describe faithful augmented
  modules $V$ associated with $2$-transitive group
  $G$-actions on $G/H$ such that $\alpha(V)>1$
  (see \Cref{thm:2trans_groups}). 

  Finally, it is possible to show that if $H\trianglelefteq G$,
  $V$ is an $H$-representation and $W$ is a $G$-representation
  induced from $V$, then, $\alpha(V)$ divides $\alpha(W)$.
  This observation suggests the following question:
  under which conditions, does $\alpha(V)=1$ imply $\alpha(W)=1$?
  We answer this question affirmatively
  assuming that $V$ is irreducible and
  $G/H$ is solvable (see \Cref{prop:normal}). \\

  \noindent
  \paragraph{(B)}
  In general, \Cref{prob:B} is a subject of the equivariant
  obstruction theory (see \cite{tomDieck,Bartsch} and references therein)
  and is far away from being settled even in relatively simple cases.
  On the other hand, if $W$ is a subrepresentation of the
  $m$-th symmetric power of $V$, then one can look for a required map
  in the form of a $G$-equivariant {\em $m$-homogeneous} map
  $\Phi:S(V)\to W\setminus\set{0}$,
  in which case $\deg(\Phi)=m^n$.
  In particular case when $m=2$,
  \Cref{prob:B} reduces to the existence of a commutative
  (in general, non-associative) bilinear multiplication
  $\ast:V\times V\to V\subset\mathrm{Sym}^2(V)$ satisfying two properties:
  (i) $\ast$ commutes with the $G$-actions,
  and (ii) the complex algebra $(V,\ast)$ is free from $2$-nilpotents.
  Combining this idea with the techniques related to the
  so-called Norton algebra (see \cite{Cameron-Goethals-Seidel}),
  we establish the existence of an equivariant quadratic map between
  two non-equivalent $(n-1)$-dimensional
  $S_n$-representations (having the same symmetric square)
  taking non-zero vectors to non-zero ones, provided that $n$ is odd
  (see \Cref{thm:sn}). For $n=5$, we give an explicit
  formula of such a map. \\
  
  After the Introduction, the paper is organized as follows.
  \Cref{subsec-prelim} contains preliminaries related to groups
  and their representations.
  In \Cref{sec:alpha-cracteristic},
  we first consider functorial properties of $\alpha$-characteristic
  (see \Cref{prop:alpha_func}). Next, we focus on solvable and
  nilpotent groups and prove \Cref{thm:solvable_groups} and
  \Cref{prop:nilpotent_realize_alpha}. 
  2-transitive actions are considered in \Cref{sec:4},
  while induced representations with trivial
  $\alpha$-characteristic are considered in \Cref{sec:5}.
  \Cref{sec:6} is devoted to the existence of quadratic maps
  relevant to \Cref{prob:B}. In the concluding
  \Cref{sec:7}, we consider applications of the obtained results
  to the congruence principle.    


  \section{Preliminaries}\label{subsec-prelim}
  \subsection{Groups and Their Actions}\label{subsec:groups_actions}
  This subsection collects some basic facts about finite
  groups and their actions that are used in our paper.
  Although the material given here is well-known to any
  group theorist, we decided to include it here,
  because we expect that the paper could be of interest for
  mathematicians working outside the group theory.

  Throughout the paper, we consider only
  finite groups if no otherwise is stated,
  and by $G$, we always mean a finite group.

  For any $G$, denote by $\aut(G)$ (resp.~$\inn(G)$)
  the group of automorphisms
  (resp.~inner automorphisms) of $G$,
  by $e$ the identity of $G$ and
  by $\trp$ the trivial group or
  the trivial subgroup of $G$.

  Given $H,K<G$, set
  $HK:=\set{hk\in G:h\in H,\,k\in K}$.
  Given a prime $p$, denote by
  $\syl_p(G)$ the collection of
  Sylow $p$-subgroups of $G$.
  Recall an important characterization of
  solvable groups from \cite{KaplanLevy}:

  \begin{thm}\label{thm:KaplanLevy}
    Let $p_1,p_2,\cdots,p_k$ be
    a sequence of all distinct prime
    factors of $\abs{G}$.
    Then, $G$ is solvable if and only if
    $G=P_1P_2\cdots P_k$ for {\em any} choice of
    $P_j\in\syl_{p_j}(G)$, $j=1,\dots,k$.
  \end{thm}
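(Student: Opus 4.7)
The plan is to handle the two implications separately, since they rely on genuinely different techniques. For the \emph{only if} direction (solvability implies the product formula) I would argue by induction on $|G|$. Let $N$ be a minimal normal subgroup of the solvable group $G$; by solvability, $N$ is an elementary abelian $p$-group for some prime $p$, which after relabeling we may take to be $p_1$. Since $N$ is a normal $p_1$-subgroup, it is contained in every Sylow $p_1$-subgroup of $G$, so in particular $N\le P_1$. Passing to $G/N$, the images $P_1/N$ and $P_jN/N$ (for $j\ge 2$) are Sylow subgroups of $G/N$, so by induction
\[
    (P_1/N)(P_2N/N)\cdots(P_kN/N)=G/N,
\]
which lifts to the set-theoretic equality $P_1P_2\cdots P_kN=G$. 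One then uses normality of $N$ to conjugate it leftwards across $P_k,P_{k-1},\ldots,P_2$, and finally absorbs $N$ into $P_1$ via $N\le P_1$, concluding $P_1P_2\cdots P_k=G$.

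For the \emph{if} direction, I would argue by contrapositive: assuming $G$ is nonsolvable, I want to exhibit an ordering of the primes and a choice of Sylow subgroups $P_1,\ldots,P_k$ with $P_1P_2\cdots P_k\subsetneq G$. Take a minimal counterexample $G$. Every proper subgroup and every proper quotient of $G$ inherits the product condition (via the appropriate restriction of Sylows), hence by minimality is solvable, so $G$ is either simple or very close to simple. The substantive step is then to invoke the Classification of Finite Simple Groups and proceed family by family: for alternating groups one exhibits explicit Sylow subgroups whose product misses an element by a direct permutation computation; for finite simple groups of Lie type one exploits the Borel/parabolic structure to force two Sylows to share a large common overgroup and hence produce a set product that is too small; for the sporadic groups one checks the claim individually using their character tables or known subgroup structure.

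The main obstacle is clearly the \emph{if} direction and its dependence on CFSG — this is exactly the substantive content of Kaplan--Levy's paper and there is no obvious elementary shortcut, since even ruling out a single family of simple groups requires delicate information about the interaction of distinct Sylow subgroups. The \emph{only if} direction, by contrast, needs nothing beyond the existence of elementary abelian minimal normal subgroups in solvable groups together with the routine observation that a normal subgroup can be slid across a product of subgroups.
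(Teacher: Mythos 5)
The paper does not prove this statement at all: it is quoted verbatim from the cited work of Kaplan and Levy (``Recall an important characterization of solvable groups from \cite{KaplanLevy}''), so there is no in-paper argument to compare yours against. Judged on its own, your \emph{only if} direction is essentially correct and standard: a minimal normal subgroup $N$ of a solvable group is an elementary abelian $p$-group, it lies in $O_p(G)$ and hence in every Sylow $p$-subgroup, the induction on $G/N$ applies, and $N$ can be slid across the product by normality and absorbed. One small repair: the ordering $p_1,\dots,p_k$ is part of the data of the statement, and since products of subgroups are not commutative you are not free to relabel so that $N$'s prime comes first. This costs nothing --- if $N$ is a $p_i$-group, slide the trailing $N$ leftward to the position of $P_i$ and absorb it there using $NP_i=P_i$ --- but as written the relabeling proves the claim only for one ordering.

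The \emph{if} direction contains a genuine gap beyond the (honestly acknowledged) fact that the CFSG case analysis is never carried out. Your minimal-counterexample reduction asserts that ``every proper subgroup and every proper quotient of $G$ inherits the product condition.'' For quotients this is fine: a factorization $G=P_1\cdots P_k$ maps onto $G/N=(P_1N/N)\cdots(P_kN/N)$, and every Sylow subgroup of $G/N$ is such an image. For proper subgroups $H<G$ there is no such mechanism: a factorization of $G$ into Sylow subgroups of $G$ does not restrict to a factorization of $H$ into Sylow subgroups of $H$, and the hypothesis gives you no control over products of Sylow subgroups of $H$. Without subgroup inheritance you cannot conclude that a minimal nonsolvable group satisfying the condition is ``simple or very close to simple,'' so the reduction that is supposed to set up the CFSG case-check does not get off the ground. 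The remaining outline (family-by-family verification for alternating, Lie type, and sporadic groups) is precisely the substantive content of the Kaplan--Levy paper; deferring to it is reasonable for a result the paper itself only cites, but it means your proposal is a proof of one direction and a sketch of a strategy --- with a broken first step --- for the other.
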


  Recall that $N\trianglelefteq G$
  is called a {\em minimal normal subgroup}
  if $N$ is non-trivial and contains
  no other non-trivial normal subgroups of $G$.
  The {\em socle} of $G$
  is the subgroup generated by
  all minimal normal subgroups of $G$.
  The following result is well-known
  (see\cite{Isaacs}).

  \begin{prop}\label{prop:solvable_minimal_normal}
    A minimal normal subgroup of a
    solvable group is elementary abelian.
  \end{prop}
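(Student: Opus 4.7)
The plan is to use the defining property of a minimal normal subgroup twice: once to force commutativity, and once more to force every non-identity element to have the same prime order. The main tool is the observation that any characteristic subgroup of $N$ is normal in $G$ (since conjugation by $G$ induces automorphisms of $N$), so minimality of $N$ forces every proper characteristic subgroup of $N$ to be trivial.

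First I would establish abelianness. Let $N$ be a minimal normal subgroup of the solvable group $G$. Then $N$ is itself solvable (as a subgroup of a solvable group), and non-trivial, so its derived subgroup $N'$ is a proper subgroup of $N$. Since $N'$ is characteristic in $N$, it is normal in $G$. By minimality of $N$, this forces $N' = \trp$, so $N$ is abelian.

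Next I would promote ``abelian'' to ``elementary abelian''. Pick a prime $p$ dividing $\abs{N}$ (which exists by Cauchy's theorem since $N$ is non-trivial). Since $N$ is abelian, the set
\[
  N[p] := \set{x \in N : x^p = e}
\]
is a subgroup of $N$, and it is preserved by every automorphism of $N$, hence characteristic in $N$ and therefore normal in $G$. Cauchy's theorem gives $N[p] \neq \trp$, so by minimality of $N$ we get $N[p] = N$. Thus every element of $N$ has order dividing $p$, which together with abelianness means $N$ is an elementary abelian $p$-group.

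There is no real obstacle here; the only point requiring a little care is to ensure that the subgroups used to invoke minimality ($N'$ and $N[p]$) are truly normal in $G$ and not merely in $N$, which is handled by noting that both are characteristic in $N$ and that $N \trianglelefteq G$ implies $\aut$-invariant subgroups of $N$ are $G$-invariant. Once that is observed, the argument is a direct two-step application of minimality.
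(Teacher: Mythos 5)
Your proof is correct. The paper itself gives no argument for this proposition, citing it as well known from Isaacs; your two-step argument (minimality kills the derived subgroup $N'$ because it is characteristic in $N$ and hence normal in $G$, then minimality forces $N[p]=N$ for a prime $p$ dividing $\abs{N}$) is exactly the standard textbook proof, and you correctly flag the one subtle point, namely that characteristic subgroups of a normal subgroup are normal in the ambient group.
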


  Let $X$ be a $G$-space.
  For any $x\in X$, denote
  by $G_x$ the {\em isotropy} (stabilizer) of $x$ and by
  $G(x)$ the $G$-orbit of $x$ in $X$.
  We call the conjugacy class of $G_x$ the
  {\em orbit type} of $x$ and denote by
  $\Phi(G;S)$ the collection of orbit
  types of points in $S\subset V$.
  For any $H<G$, denote by
  $X^H:=\set{x\in X:hx=x\mbox{ for all } h\in H}$
  the set of $H$-fixed points in $X$.

  If $\abs{X}\geq2$,
  we say that $G$ acts {\em $2$-transitively} on $X$
  if for any $a,b,c,d\in X$, $a\neq b$, $c\neq d$,
  there exists $g\in G$ such that $ga=c$ and $gb=d$.
  Since any transitive (in particular, $2$-transitive)
  action is equivalent to the $G$-action on the coset space
  $G/H$ by left translation for some $H<G$,
  the existence of $2$-transitive
  $G$-action is actually an intrinsic property of $G$.
  Therefore, we adopt the following definition.

  \begin{defi}\label{def:2trans_group}
    $G$ is called a {\em $2$-transitive group}
    if it admits a faithful $2$-transitive action,
    or equivalently, $G$ acts $2$-transitively
    on $G/H$ (by left translation) for some $H<G$.
  \end{defi}

  Suppose $X$ and $Y$ are (topological) $G$-spaces.
  A continuous map $f:X\to Y$ is called {\em $G$-equivariant}
  if $f(gx)=gf(x)$ for any $g\in G$ and $x\in X$.
  Note that in this case, $f$ takes $H$-fixed points
  in $X$ to $H$-fixed points in $Y$
  (i.e., $f(X^H)\subset Y^H$) for any $H<G$).
  If, in addition, $X$ and $Y$ are {\em linear}
  $G$-spaces, a $G$-equivariant map $f:X\to Y$ is
  called {\em admissible} if $f^{-1}(0)=\set{0}$.
  We refer to \cite{Bredon} and \cite{Kushkuley-Balanov}
  for the equivariant topology background.

  \subsection{Group Representations}\label{subsec:repn}
  Throughout the paper, we consider only
  {\em finite-dimensional complex unitary}
  representations, and by $\rho$ (resp.~$V$ and $\chi$),
  we always mean a $G$-representation (resp.~the
  associated vector space and the affording
  character) if no otherwise is stated.

  Let $K$ be an arbitrary field.
  Denote by $K[G]$ the group algebra of $G$ over $K$.
  For any $\rho$, we will simply denote by the same symbol
  the extension of $\rho$ to $K[G]$
  (i.e., depending on the context,
  it is possible that $\rho:G\to\gl(V)$ or $\rho:K[G]\to\mathrm{End}(V)$).

  For any $G$, denote by $\trep{G}$ the
  {\em trivial representation} or
  {\em trivial character} of $G$ (depending on the context)
  and by $\irr(G)$ (resp.~$\irrnt(G)$) the collection of
  {\em irreducible} (resp.~{\em non-trivial irreducible})
  $G$-representations.

  For any representation $\rho:G\rightarrow GL(V)$,
  denote by $\rho(G)(x)$
  or $G(x)$ the $G$-orbit of $x$ for any $x\in V$.
  In addition, set $\Phi(\rho):=\Phi(G;S(V))$,
  where $S(V)$ stands for the unit sphere in $V$.

  If $\rho$ and $\sigma$ are $G$-representations,
  then $\iprod{\rho}{\sigma}$ will stand for the {\em scalar product}
  of their characters.


  Let $H<G$. For any $G$-representation $\rho$
  with character $\chi$, denote by $\resrep{\rho}{H}$
  and $\resrep{\chi}{H}$ the {\em restriction} of
  $\rho$ and $\chi$ to $H$, respectively.
  On the other hand, for any $H$-representation $\psi$
  with character $\omega$, denote by $\indrep{\psi}{G}$
  and $\indrep{\omega}{G}$ the {\em induced representation}
  and the {\em induced character} of $\psi$ to $G$,
  respectively.

  Let $\sigma$ be an automorphism of $G$.
  Denote by $\sconj{\rho}{\sigma}$ (resp.~$\sconj{\chi}{\sigma}$)
  the composition $\rho\circ\sigma$ (resp.~$\chi\circ\sigma$).
  It is clear that: (i) $\sconj{\rho}{\sigma}$ is a
  $G$-representation affording character $\sconj{\chi}{\sigma}$,
  and (ii) if $\rho$ is irreducible, so is $\rho^\sigma$.
  If, in particular, $\sigma:g\mapsto ugu^{-1}$
  for some $u\in U\trianglerighteq G$,
  denote by $\gconj{\rho}{u}$ (resp.~$\gconj{\chi}{u}$)
  the composition $\rho\circ\sigma$ (resp.~$\chi\circ\sigma$)
  instead of $\sconj{\rho}{\sigma}$ (resp.~$\sconj{\chi}{\sigma}$).
  In such a case, $\gconj{\rho}{u}$ (resp.~$\gconj{\chi}{u}$)
  is said to be {\em $U$-conjugate} to $\rho$ (resp.~$\chi$).

  Recall the following result
  for permutation representations
  associated to $2$-transitive actions
  (see \cite{Isaacs1976}).

  \begin{prop}\label{prop:2trans_perep}
    Let $G$ act transitively on $X$.
    Then, the permutation representation 
    associated to this action
    is equivalent to $\trep{G}\oplus\rho$,
    where all irreducible components of $\rho$
    are non-trivial. If, in addition, $G$
    acts $2$-transitively on $X$,
    then $\rho$ is irreducible.
  \end{prop}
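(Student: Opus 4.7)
The plan is to use the standard character-theoretic calculation of inner products for permutation representations. Let $\pi$ denote the permutation representation associated to the action of $G$ on $X$, with affording character $\chi_\pi$. The first observation I would record is that for each $g\in G$, one has $\chi_\pi(g)=\abs{X^g}$, where $X^g$ denotes the set of fixed points of $g$ on $X$.

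Next, I would invoke Burnside's orbit-counting lemma (a direct consequence of the character inner product formula applied to $\chi_\pi$ and $\trep{G}$): the multiplicity $\iprod{\chi_\pi}{\trep{G}}$ equals $\frac{1}{\abs{G}}\sum_{g\in G}\abs{X^g}$, which is precisely the number of $G$-orbits on $X$. Since the action is transitive, this multiplicity equals $1$. Writing $\pi\cong\trep{G}\oplus\rho$ via complete reducibility, this forces $\rho$ to have no trivial irreducible constituent, which is the first assertion.

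For the second assertion, I would pass to the product action on $X\times X$. Its associated permutation character is $\chi_\pi\cdot\chi_\pi=\chi_\pi^2$, since for any $g\in G$ the number of fixed pairs is $\abs{X^g}^2$. Applying Burnside's lemma once more,
\begin{equation*}
  \iprod{\chi_\pi}{\chi_\pi}=\iprod{\chi_\pi^2}{\trep{G}}=\#\{G\text{-orbits on }X\times X\}.
\end{equation*}
Under the hypothesis that $G$ acts $2$-transitively on $X$, the orbits on $X\times X$ are exactly the diagonal $\{(x,x):x\in X\}$ and its complement, so this count equals $2$. Therefore $\iprod{\chi_\pi}{\chi_\pi}=2$, which means $\pi$ decomposes into exactly two pairwise non-isomorphic irreducible constituents, each appearing with multiplicity one. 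Combining with the first part, one summand is $\trep{G}$, so $\rho$ itself must be irreducible.

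There is no real obstacle here beyond bookkeeping; the only subtle point to flag is the identification $\chi_{\pi\otimes\pi}=\chi_\pi^2$ together with the identification of the diagonal-action permutation representation on $X\times X$ with $\pi\otimes\pi$, which is immediate from the definitions but worth stating explicitly so that the Burnside computation on $X\times X$ is justified without ambiguity.
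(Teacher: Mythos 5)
Your proof is correct and complete. The paper itself gives no proof of this proposition, citing it as a standard fact from Isaacs' book; the argument you give (Burnside's orbit-counting lemma applied to $\iprod{\chi_\pi}{\trep{G}}$ for transitivity and to $\iprod{\chi_\pi}{\chi_\pi}$ via the orbit count on $X\times X$ for $2$-transitivity) is precisely the classical argument in that reference, so there is nothing to reconcile.
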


  The $G$-representation $\rho$ in
  \Cref{prop:2trans_perep} will play
  an essential role in our consideration.
  We adopt the following definition.

  \begin{defi}\label{def:aug_subm}
    Following \cite{Griess},
    we call the representation $\rho$ from
    \Cref{prop:2trans_perep}
    the {\em augmentation representation}
    associated to the
    transitive $G$-action on $X$
    (resp.~$G/H$ by left translation)
    and denote it by
    $\rho_{(G;X)}^a$ (resp.~$\rho_{[G;H]}^a$)\footnote{The
    corresponding module will be called the augmentation module.
    We do not use a special notation for it.}.
    In particular, denote by $\varrho_2(G)$
    the collection of all its non-isomorphic augmentation representations
    arised from $2$-transitive actions of $G$.
  \end{defi}

  We refer to \cite{Serre}, \cite{Brocker-tomDieck} and \cite{Isaacs1976}
  for the representation theory background and
  notation frequently used in this paper.

  \section{$\alpha$-characteristic of $G$-representations}
      \label{sec:alpha-cracteristic}
  The following definition is crucial for our discussion.

  \begin{defi}\label{defi:alpha-char}
    For a $G$-representation $\rho:G\to\gl(V)$,
    we call
    \begin{align*}
      \alpha(\rho)=\alpha(G,S(V)):=\;&
          \gcd\set{\abs{G(x)}:x\in S(V)}\\
      =\;&\gcd\set{\abs{G/H}:(H)\in\Phi(\rho)}
    \end{align*}
    the {\em $\alpha$-characteristic} of $\rho$.
    We will call the $\alpha$-characteristic
    of a representation {\em trivial} if it takes value $1$.
  \end{defi}

  \noindent
  Note that $\alpha$-characteristic admits
  the following functorial properties.

  \begin{prop}\label{prop:alpha_func}
    Suppose $\rho$ is a $G$-representation.
    \begin{enumerate}[leftmargin=1.75em]
    \renewcommand{\labelenumi}{\rm(\alph{enumi})}
    \item Let $H<G$.
      Then, $\alpha(\resrep{\rho}{H})$ divides $\alpha(\rho)$.
    \item Let $H\trianglelefteq G$ and $\theta$ be an
      $H$-representation.
      Then, $\alpha(\theta)$ divides $\alpha(\theta^G)$.
    \item Let $\sigma$ be an automorphism of $G$.
      Then, $\alpha(\rho^\sigma)=\alpha(\rho)$.
    \item Let $\ff$ be a splitting field of the
      group algebra $\bq[G]$ and $\sigma$
      an automorphism of $\ff$.
      Then, $\alpha(\rho^\sigma)=\alpha(\rho)$
      for $\rho\in\irr(G)$.
    \item Let $\psi$ be another $G$-representation.
      Then, $\alpha(\rho\oplus\psi)=
      \gcd\set{\alpha(\rho),\alpha(\psi)}$.
    \end{enumerate}
  \end{prop}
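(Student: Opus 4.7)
The plan is to prove the five parts in the order (a), (c), (e), (b), (d). The first three go by direct arguments; part (b) then follows from Clifford/Mackey for a normal subgroup combined with (a), (c), and (e); part (d) requires a separate character-theoretic argument and is the main obstacle.

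For (a), I would observe that each $G$-orbit $G(x) \subset S(V)$ decomposes as a disjoint union of $H$-orbits, so $|G(x)|$ is a sum of terms each divisible by $\alpha(\resrep{\rho}{H})$. Taking $\gcd$ over $x$ then gives $\alpha(\resrep{\rho}{H}) \mid \alpha(\rho)$. Part (c) is essentially tautological: since $\sigma$ is a bijection of $G$, the orbits of $G$ on $V$ via $\rho$ and via $\sconj{\rho}{\sigma}$ literally coincide as subsets of $V$. For (e), I use $G_{(u,w)} = G_u \cap G_w$: testing at vectors of the form $(u,0)$ and $(0,w)$ shows that $\alpha(\rho \oplus \psi)$ divides both $\alpha(\rho)$ and $\alpha(\psi)$; conversely, for any nonzero $(u,w)$, the index $[G : G_u \cap G_w]$ is a multiple of $[G : G_u]$ when $u \neq 0$ (and of $[G:G_w]$ when $w \neq 0$), hence always a multiple of $\gcd(\alpha(\rho),\alpha(\psi))$.

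For (b), the crucial input is the Clifford/Mackey decomposition for a normal subgroup: if $T$ is a transversal of $H$ in $G$, then
\[
\resrep{(\indrep{\theta}{G})}{H} \cong \bigoplus_{g \in T} \gconj{\theta}{g},
\]
where each summand $\gconj{\theta}{g}$ is the $H$-representation $h \mapsto \theta(ghg^{-1})$, which is defined precisely because $H \trianglelefteq G$. Applying (c) to the inner automorphism $h \mapsto ghg^{-1}$ of $H$ gives $\alpha(\gconj{\theta}{g}) = \alpha(\theta)$ for each summand; part (e) then yields $\alpha(\resrep{(\indrep{\theta}{G})}{H}) = \alpha(\theta)$; and part (a) gives $\alpha(\resrep{(\indrep{\theta}{G})}{H}) \mid \alpha(\indrep{\theta}{G})$. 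Chaining these yields (b).

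The main obstacle is (d): here $\sigma$ is a field automorphism rather than a group automorphism, so $G$-orbits of $\rho$ and $\sconj{\rho}{\sigma}$ cannot be compared directly. My plan is to pass to the level of isotropy subgroups via fixed-point dimensions. For each subgroup $H < G$,
\[
\dim V^H = \iprod{\trep{H}}{\resrep{\rho}{H}} = \frac{1}{|H|}\sum_{h \in H}\chi(h)
\]
is a non-negative integer, hence fixed by $\sigma$; so the function $H \mapsto \dim V^H$ is identical for $\rho$ and for $\sconj{\rho}{\sigma}$. Over $\bc$, a subgroup $H$ lies in $\Phi(\rho)$ iff $\dim V^K < \dim V^H$ for every $K \supsetneq H$ (so that $V^H$ is not covered by the finite union of proper subspaces $V^K$), which is a purely combinatorial condition on the dimension function. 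Therefore $\Phi(\rho) = \Phi(\sconj{\rho}{\sigma})$, and $\alpha(\rho) = \alpha(\sconj{\rho}{\sigma})$.
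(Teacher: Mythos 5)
Your proposal is correct. The paper only writes out a proof of part (b), dismissing (a), (c), (d), (e) as immediate from the definition, so the real point of comparison is (b). There the paper argues directly on elements of the induced module: it writes $v=\sum_i g_i w_i$ with some $w_1\neq 0$, uses normality of $H$ to conclude $H_v\leq g_1H_{w_1}g_1^{-1}$, hence that $\abs{H(w_1)}$ divides $\abs{H(v)}$, which in turn divides $\abs{G(v)}$. Your route --- restrict $\theta^G$ to $H$ via the Mackey/Clifford decomposition $\resrep{(\theta^G)}{H}\cong\bigoplus_{g\in T}\gconj{\theta}{g}$ and then chain (c), (e), (a) --- is the same mechanism packaged structurally: the paper's computation is precisely the elementwise verification of that decomposition. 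Your version is arguably cleaner, since it isolates the one place normality enters (making $\gconj{\theta}{g}$ an $H$-representation on each summand $gW$) and yields the slightly stronger statement $\alpha(\resrep{(\theta^G)}{H})=\alpha(\theta)$. Your argument for (d) --- that $H\mapsto\dim V^H=\frac{1}{\abs{H}}\sum_{h\in H}\chi(h)$ is a rational integer, hence Galois-invariant, and that this function determines the lattice of isotropy subgroups because over $\bc$ a nonzero subspace is never a finite union of proper subspaces --- is the right one, and (d) is the one part that genuinely needs an argument rather than being tautological like (c). One cosmetic slip there: the criterion ``$H\in\Phi(\rho)$ iff $\dim V^K<\dim V^H$ for every $K\supsetneq H$'' is vacuously satisfied by $H=G$ even when $V^G=\set{0}$, so you should add the requirement $\dim V^H\geq 1$; this does not affect the conclusion, since the corrected criterion still depends only on the fixed-point dimension function, which you have shown is preserved by $\sigma$.
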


  \begin{proof}
    Here we prove part (b) only
    since other properties are quite
    straightforward from \Cref{defi:alpha-char}.
    Denote by $V$ and $W$ the
    representation spaces
    of $\indrep{\theta}{G}$ and $\theta$, respectively.
    Take an arbitrary non-zero $v\in V$.
    It suffices to show that $\alpha(\theta)$ divides $\abs{N(v)}$.
    Since $V$ is induced by $W$, one has
    $v=\sum{g_iw_i}$, where
    $\set{g_i}$ is the complete set of representatives
    of $N$-cosets in $G$ and $w_i\in W$.
    Without loss of generality, assume that $w_1\neq 0$.
    Take $n\in N_v$.
    Since $nv=\sum{g_i(g_i^{-1}ng_i)w_i}$
    and $N$ is normal,
    we conclude that $n\in N_v$ if and only if 
    $n\in g_iN_{w_i}g_i^{-1}$ for every $i$. In particular,
    $n\in g_1N_{w_1}g_1^{-1}$ implying
    $N_v<g_1N_{w_1}g_1^{-1}<N$. Therefore,
    $\abs{N(w_1)}=\subgind{N}{N_{w_1}}$
    divides $\abs{N(v)}=\subgind{N}{N_v}$ and
    the result follows from the fact that
    $\alpha(\theta)$ divides $\abs{N(w_1)}$.
  \end{proof}

  \begin{remark}\label{rmk:alpha_func}
    (i) According to \Cref{prop:alpha_func}(e),
    given a $G$-representation $\rho$ (possibly reducible),
    one can evaluate $\alpha(\rho)$ by
    computing $\alpha$-characteristics
    of all irreducible components of $\rho$.
    \vskip .3em
    
    \noindent
    (ii) The conclusion of \Cref{prop:alpha_func}(b)
    is not true if $H$ is not normal in $G$.
    The simplest example is provided by the group
    $G=S_3$ with $H$ to be an order two subgroup.
    If $\theta$ is a non-trivial reperesentation of $H$,
    then $\theta^G = \rho_1\oplus\rho_2$ where
    $\dim(\rho_1)=1,\dim(\rho_2)=2$. 
    In this case,
    $\alpha(\theta)=2$ while $\alpha(\rho_1)=2$ and $\alpha(\rho_2)=3$,
    so that $\alpha(\theta^G)=1$.
    \vskip .3em

    \noindent
    (iii) One could think that there always exists an irreducible
    constituent $\rho$ of $\theta^G$ with $\alpha(\theta)=\alpha(\rho)$.
    But this is not true as the following example shows.
    Take $G=\qb$, a quaternion group of order eight, and let $H$ be
    its cyclic subgroup of order $4$. If $\theta$ is a faithful
    irreducible representation of $H$, then $\theta^G$ is an irreducible
    $2$-dimensional $G$-representation.
    In this case, $\alpha(\theta)=4$ while $\alpha(\theta^G)=8$.
  \end{remark}

  \begin{example}
    Computation of $\alpha(\rho)$
    for $\rho\in\irrnt(G)$ involves finding
    maximal orbit types $(H)\lneq (G)$ of $\rho$.
    For example, the group $A_5$
    admits four non-trivial irreducible
    representations with
    the lattices of orbit types shown in 
    \Cref{fig:a5_irr_lat_orbtypes}.
    Then, for each $\rho\in\irrnt(A_5)$,
    $\alpha(\rho)$ is the greatest
    common divisor of indices of
    proper subgroups which appear
    in the lattice.
    The result is shown in \Cref{tbl:a5_chartbl}.
  \end{example}

  \begin{remark}
    Note that the character table of a group does not determine
    the $\alpha$-characteristic of its irreducible representations.
    For example, $D_8$ and $Q_8$ have the same character table
    while the $\alpha$-characteristic of their unique
    $2$-dimensional irreducible representations are
    distinct ($4$ for $D_8$ and $8$ for $Q_8$).
  \end{remark}

  \subsection{$\alpha$-characteristic of Solvable Group Representations}
  \Cref{prob:A} together with \Cref{rmk:alpha_func}
  give rise to the following questions.

  \begin{question}\label{q:A}
    Does there exist a non-trivial group $G$ such that
    $\alpha(\rho)=1$ for {\em any} $\rho\in\irr(G)$?
  \end{question}

  \begin{question}\label{q:B}
    Does there exist a reasonable class of groups $\mathcal{A}$
    such that for {\em any} $G\in\mathcal{A}$, one has
    \begin{align}\label{eq:dagger}
      \mbox{$\alpha(\rho)>1$ for {\em any} $\rho\in\irrnt(G)$?}
      \tag{$\dagger$}
    \end{align}
  \end{question}

  \begin{question}\label{q:C}
    Given a group $G$ which is neither in the case of
    \Cref{q:A} nor \Cref{q:B},
    how can one find a
    $G$-representation $\rho$ with $\alpha(\rho)>1$?
  \end{question}

  \noindent
  An affirmative answer to \Cref{q:A} is given by
  the following example.

  \begin{example}
    The Janko Group $J_1$ has 15 irreducible
    representations---all of them admit
    trivial $\alpha$-characteristics.
  \end{example}

  \noindent
  We give a complete answer to \Cref{q:B}
  in the rest of this subsection and
  address \Cref{q:C} in \Cref{sec:4,sec:5}.
  The following example is the
  starting point for our discussion.

  \begin{example}\label{example:basic_A_groups}
    If $G$ is abelian or a $p$-group, then
    \eqref{eq:dagger} is true.
  \end{example}

  \noindent
  We will show that the following statement is true.

  \begin{thm}\label{thm:solvable_groups}
    $G$ is solvable if and only if
    $\alpha(\rho)>1$ for any $\rho\in\irrnt(G)$.
  \end{thm}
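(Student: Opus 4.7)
The plan is to prove the two implications separately.

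For the forward direction, I will induct on $|G|$. Pick a minimal normal subgroup $N\trianglelefteq G$; by \Cref{prop:solvable_minimal_normal} $N$ is an elementary abelian $p$-group for some prime $p$. Let $\rho\in\irrnt(G)$ have representation space $V$. Since $N$ is normal, $V^N$ is $G$-stable, and irreducibility of $V$ forces $V^N\in\{0,V\}$. If $V^N=0$, pick any Sylow $p$-subgroup $P$ of $G$ containing $N$; then $V^P\subseteq V^N=0$, so no isotropy $G_v$ contains a Sylow $p$-subgroup and every orbit length in $S(V)$ is divisible by $p$, giving $\alpha(\rho)>1$. Otherwise $N\subseteq\ker\rho$, so $\rho$ descends to an irreducible of $G/N$; orbit lengths in $S(V)$ are identical under the $G$- and $G/N$-actions, hence $\alpha_G(\rho)=\alpha_{G/N}(\rho)>1$ by the induction hypothesis applied to the smaller solvable group $G/N$.

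For the reverse direction I will use \Cref{thm:KaplanLevy} to reduce the problem to a single identity in $\bc[G]$. Fix an arbitrary tuple of Sylow subgroups $(P_1,\ldots,P_k)$ with $P_j\in\syl_{p_j}(G)$. Set $e_j:=|P_j|^{-1}\sum_{g\in P_j}g\in\bc[G]$---the averaging operator that realises, in any representation $\rho:G\to\gl(V)$, the orthogonal projection of $V$ onto $V^{P_j}$---and consider
\begin{equation*}
E:=e_1 e_2\cdots e_k\in\bc[G].
\end{equation*}
Evaluating on the trivial representation: each $e_j$ acts as the identity on $\bc$, so $E$ acts as $1$ on $V_{\trp}$. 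Evaluating on an arbitrary $\chi\in\irrnt(G)$ with space $V_\chi$: the hypothesis $\alpha(\chi)>1$ combined with \Cref{defi:alpha-char} and Sylow conjugacy means some prime $p_j$ satisfies $V_\chi^{P_j}=0$, whence the corresponding projector vanishes and $E$ acts as $0$ on $V_\chi$. The Wedderburn decomposition $\bc[G]\cong\bigoplus_{\chi\in\irr(G)}\mathrm{End}(V_\chi)$ determines $E$ by its action on each irreducible, so $E$ coincides with the primitive central idempotent $e_{\trp}=|G|^{-1}\sum_{g\in G}g$ attached to the trivial character.

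Since $\prod_{j=1}^k|P_j|=|G|$, multiplying $E=e_{\trp}$ through by $|G|$ yields
\begin{equation*}
\hat P_1\,\hat P_2\cdots\hat P_k=\sum_{g\in G}g,\qquad\text{where } \hat P_j:=\sum_{p\in P_j}p.
\end{equation*}
Expanding the left-hand side as $\sum_{g\in G}m(g)\,g$ with $m(g):=|\{(p_1,\ldots,p_k)\in P_1\times\cdots\times P_k:p_1\cdots p_k=g\}|$ and comparing coefficients forces $m(g)=1$ for every $g\in G$; in particular every $g\in G$ admits a factorisation $g=p_1\cdots p_k$, i.e.\ $G=P_1 P_2\cdots P_k$. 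Since the tuple $(P_j)$ was arbitrary, \Cref{thm:KaplanLevy} forces $G$ to be solvable.

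The expected obstacle is locating the correct algebraic packaging in the reverse direction: the observation that the hypothesis $\alpha(\chi)>1$ for every non-trivial irreducible is precisely what forces the idempotent $E$ above to equal $e_{\trp}$, bridging representation-theoretic data with the combinatorial factorisation criterion of Kaplan--Levy. Once this identification is in place, a one-line coefficient comparison closes the argument; the forward direction, by contrast, is a routine reduction to a minimal normal subgroup.
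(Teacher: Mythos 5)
Your proof is correct and follows essentially the same route as the paper: the forward direction is the same induction on a minimal normal (elementary abelian $p$-)subgroup, splitting on whether $V^N=0$ or $N\le\ker\rho$, and the reverse direction is the same group-algebra argument, with your Wedderburn/primitive-central-idempotent identification of $E$ with $e_{\trp}$ playing the role of the paper's appeal to the injectivity of the regular representation before invoking the Kaplan--Levy factorisation criterion.
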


  \begin{remark}
    As it will follow from the proof,
    the conclusion of the \Cref{thm:solvable_groups}
    remains true if one replace the complex field by
    an algebraically closed field of a characteristic
    coprime to $|G|$.
  \end{remark}

  Let us first present two lemmas required
  for the proof of necessity in
  \Cref{thm:solvable_groups}.

  \begin{lemma}\label{lemma:solvable_group1}
    Let $\rho\in\irrnt(G)$ and $P\in\syl_p(G)$.
    Then $\alpha(\rho_P)=\alpha(\rho)_p$,
    where $\alpha(\rho)_p$ is the highest $p$-power that divides
    $\alpha(\rho)$. In addition,
    the following statements are equivalent.

    \begin{enumerate}
    \renewcommand{\labelenumi}{(\roman{enumi})}
    \item $p$ divides $\alpha(\rho)$.
    \item $P_x\lneq P$ for any $x\in S(V)$.
    \item $\iprod{\resrep{\chi}{P}}{\trep{P}}=0$.
    \end{enumerate}
  \end{lemma}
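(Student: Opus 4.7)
The plan is to establish the equality $\alpha(\rho_P)=\alpha(\rho)_p$ by proving divisibility in both directions, and then to deduce the three equivalences from this plus a standard character-theoretic identification of $\langle\resrep{\chi}{P},\trep{P}\rangle$.

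First I would exploit the standard Sylow correspondence for point stabilizers. For any $x\in S(V)$, $P_x=P\cap G_x$ is a $p$-subgroup of $G_x$ and hence contained in some Sylow $p$-subgroup of $G_x$; in particular $|P_x|$ divides $|G_x|_p$. Writing $|G_x|_p = k\cdot|P_x|$ for some positive integer $k$, a direct computation gives
\[
[P:P_x]\;=\;\frac{|G|_p}{|P_x|}\;=\;k\cdot\frac{|G|_p}{|G_x|_p}\;=\;k\cdot[G:G_x]_p.
\]
Since $\alpha(\rho)_p$ divides $[G:G_x]_p$ for every $x$, it divides $[P:P_x]$ for every $x$, so $\alpha(\rho)_p\mid\alpha(\rho_P)$.

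For the reverse divisibility, I would use that every Sylow $p$-subgroup of $G_x$ arises as $gPg^{-1}\cap G_x$ for a suitable $g\in G$. Translating this by setting $y:=g^{-1}x$ (so that $G_y=g^{-1}G_xg$), the subgroup $P_y=P\cap G_y$ becomes an honest Sylow $p$-subgroup of $G_y$, and the computation above specializes to $[P:P_y]=[G:G_y]_p=|G(x)|_p$. Thus every orbit $G(x)$ contains a representative $y$ for which $[P:P_y]$ equals the $p$-part of $|G(x)|$. Consequently $\alpha(\rho_P)$ divides $|G(x)|_p$ for every $x$; since the $\gcd$ of a family of $p$-powers is the $p$-part of the $\gcd$ of the original family, this gives $\alpha(\rho_P)\mid\alpha(\rho)_p$, finishing the equality.

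For the equivalences, I note that $\alpha(\rho_P)$ is a power of $p$, so $p\mid\alpha(\rho)$ is equivalent (by the first part) to $\alpha(\rho_P)>1$, which in turn says every $P$-orbit on $S(V)$ has size $>1$, i.e.\ $P_x\lneq P$ for all $x\in S(V)$; this is (i)$\Leftrightarrow$(ii). Finally, $\langle\resrep{\chi}{P},\trep{P}\rangle=\dim V^P$ by Frobenius reciprocity / the standard projection formula, so (iii) is equivalent to $V^P=\{0\}$, which is exactly the statement that no unit vector is fixed by $P$, i.e.\ (ii). No step is a serious obstacle; the only subtlety is the Sylow-intersection argument identifying a point in each $G$-orbit whose $P$-stabilizer is as large as possible, and once that is noted the rest is bookkeeping.
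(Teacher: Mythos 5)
Your proof is correct and follows essentially the same route as the paper's: both directions of $\alpha(\rho_P)=\alpha(\rho)_p$ come from comparing $P$-stabilizers with Sylow $p$-subgroups of the $G$-stabilizers, and the three equivalences are handled identically, via the identity $\iprod{\resrep{\chi}{P}}{\trep{P}}=\dim V^P$. The only notable (cosmetic) difference is in the direction $\alpha(\rho_P)\mid\alpha(\rho)_p$: the paper gets it immediately from the trivial divisibility $\alpha(\rho_P)\mid\alpha(\rho)$ (restriction to a subgroup, \Cref{prop:alpha_func}(a)) combined with $\alpha(\rho_P)$ being a $p$-power, whereas you prove the stronger fact that every $G$-orbit contains a representative whose $P$-orbit has length exactly the $p$-part of the orbit length --- correct, but more than is needed.
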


  \begin{proof}
    By \Cref{prop:alpha_func}(a), $\alpha(\rho_P)$ divides $\alpha(\rho)$.
    Since $\alpha(\rho_P)$ is a $p$-power, we conclude that
    $\alpha(\rho_P)\,|\,\alpha(\rho)_p$.

    Let us show that $\alpha(\rho)_p$ divides the
    cardinality of every $G$-orbit in $S(V)$.
    Let $O\subseteq S(V)$ be a $G$-orbit.
    By Exercise 1.4.17 in \cite{DM}, the length of every
    $P$-orbit in $O$ is divisible by $\abs{O}_p$. Therefore,
    $\alpha(\rho)_p$ divides length of every $P$-orbit in $O$.
    Hence, $\alpha(\rho)_p$ divides the length of every $P$-orbit in $S(V)$.
    Hence, $\alpha(\rho)_p\mid\alpha(\rho_P)$.

    \begin{description}[leftmargin=0cm]
    \item[\rm(i)$\implies$(ii).]
      Since $\alpha(\rho_P)=\alpha(\rho)_p\geq p$,
      each $P$-orbit in $S(V)$ is non-trivial,
      i.e., $[P:P_x]\geq p$ for each $x\in S(V)$.
    \item[\rm(ii)$\implies$(i).]
      Suppose (ii) is true.
      Then, $p$ divides $\abs{P/P_x}=\abs{P(x)}$, which
      divides $\abs{G(x)}$, for any $x\in S(V)$.
      It follows that $p$ divides $\alpha(\rho)$.
    \item[\rm(ii)$\iff$(iii).]
      Both (ii) and (iii) are equivalent to $\dim{V^P}=0$.
    \end{description}
  \end{proof}

  \begin{remark}\label{rmk:solvable_group1}
    Notice that in \Cref{lemma:solvable_group1},
    for (ii) to imply (i),
    it is enough to assume that $P<G$ is a $p$-subgroup.
  \end{remark}
  
  \begin{remark}\label{rmk:solvable_group2}
    In what follows, denote $\underline{H}:={\ds\sum_{g\in H}}{g}\in\bz[G]$
    and $\hat{\underline{H}}:=\frac{1}{\abs{H}}\underline{H}\in\bq[G]$
    for any $H<G$.
    Under this notation, \Cref{lemma:solvable_group1} (iii)
    reads $\chi(\underline{P})=0$ or $\chi(\hat{\underline{P}})=0$.
    In addition, note that \Cref{lemma:solvable_group1} (iii)
    is equivalent to saying that $\rho$ is not a constituent of
    $\indrep{\trep{P}}{G}$.
  \end{remark}

  \begin{prop}\label{lemma:solvable_group2}
    Let $V$ be an non-trivial irreducible $G$-representation
    and $N\trianglelefteq G$.
    Then, $N$ acts non-trivially on $V$
    if and only if $N_x\lneq N$ for any $x\in S(V)$.
  \end{prop}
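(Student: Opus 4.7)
The plan is to reduce everything to a dichotomy on the subspace of $N$-fixed vectors $V^N := \{v \in V : nv = v \text{ for all } n \in N\}$, using irreducibility together with normality of $N$.

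First I would observe that $V^N$ is a $G$-invariant subspace of $V$. This is the only place where $N \trianglelefteq G$ enters the argument: if $v \in V^N$ and $g \in G$, then for any $n \in N$, the element $g^{-1}ng$ lies in $N$, so $n(gv) = g(g^{-1}ng)v = gv$, hence $gv \in V^N$. Since $V$ is irreducible as a $G$-representation, the subrepresentation $V^N$ must be either $\{0\}$ or all of $V$.

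Next I would translate each side of the claimed equivalence into a statement about $V^N$. On one hand, saying that $N$ acts non-trivially on $V$ means precisely that $V^N \neq V$; by the dichotomy above, this is equivalent to $V^N = \{0\}$. On the other hand, the condition $N_x \lneq N$ for every $x \in S(V)$ just says that no unit vector is fixed by all of $N$, i.e., $V^N \cap S(V) = \emptyset$; since $V^N$ is a linear subspace, this is again equivalent to $V^N = \{0\}$. Combining these two reformulations finishes the proof.

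There is no real obstacle here; the only point worth double-checking is that $V^N$ is genuinely $G$-stable (which requires the normality hypothesis, and fails in general if $N$ is only a subgroup). One could also note in a remark that the non-triviality hypothesis on $V$ is not actually needed: if $V$ is the trivial representation then $V^N = V$ and both sides of the equivalence become false simultaneously, so the statement still holds vacuously.
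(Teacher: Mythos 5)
Your argument is correct and is essentially identical to the paper's proof: both reduce the statement to the dichotomy $V^N=\set{\mathbf 0}$ or $V^N=V$, which follows from irreducibility once one notes that normality of $N$ makes $V^N$ a $G$-invariant subspace. The extra remark that non-triviality of $V$ is not needed is also accurate.
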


  \begin{proof}
    Since $N$ is normal in $G$, the subspace $V^N$ is $G$-invariant.
    Therefore, either $V^N=\set{\mathbf 0}$
    or $V^N=V$ from which the claim follows.
  \end{proof}

  \noindent
  The next result immediately follows from
  \Cref{lemma:solvable_group1} and \Cref{lemma:solvable_group2}
  (see also \Cref{rmk:solvable_group1}).

  \begin{coro}\label{coro:nameless2}
    Let $N\trianglelefteq G$ be a $p$-subgroup and let $V$ be a
    non-trivial irreducible $G$-representation
    where $N$ acts non-trivially.
    Then, $p$ divides $\alpha(\rho)$.
  \end{coro}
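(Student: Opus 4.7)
The corollary should drop out by composing the two cited results, so my plan is essentially to verify that the hypotheses of \Cref{lemma:solvable_group1}(ii) (in the extended form permitted by \Cref{rmk:solvable_group1}) are met with $N$ playing the role of the $p$-subgroup.

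First, I would invoke \Cref{lemma:solvable_group2}: since $N\trianglelefteq G$ acts non-trivially on the irreducible module $V$, for every $x\in S(V)$ one has $N_x\lneq N$. This is exactly condition (ii) of \Cref{lemma:solvable_group1}, but for $N$ instead of a Sylow $p$-subgroup.

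Next, I would point out that \Cref{rmk:solvable_group1} explicitly allows the implication (ii)$\implies$(i) to be applied to an arbitrary $p$-subgroup, not just a Sylow one. Concretely, since $N$ is a $p$-group and $N_x$ is a proper subgroup, the index $[N:N_x]=\abs{N(x)}$ is divisible by $p$ for every $x\in S(V)$. Because each $N$-orbit is contained in a $G$-orbit, $[N:N_x]$ divides $\abs{G(x)}$, and therefore $p$ divides $\abs{G(x)}$ for every $x\in S(V)$. Taking the gcd over all $x\in S(V)$ yields $p\mid\alpha(\rho)$.

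No real obstacle is anticipated: the content of the corollary is entirely packaged in the two preceding results, and the only thing to check carefully is that the cited remark indeed relaxes the Sylow hypothesis to an arbitrary $p$-subgroup, which is what lets us apply the lemma directly to $N$. The proof should therefore be only a couple of lines.
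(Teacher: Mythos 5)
Your proposal is correct and matches the paper's own (unwritten) argument exactly: the paper simply states that the corollary follows immediately from \Cref{lemma:solvable_group1}, \Cref{lemma:solvable_group2} and \Cref{rmk:solvable_group1}, and you have filled in precisely those details. The chain $N_x\lneq N$ $\Rightarrow$ $p\mid[N:N_x]=\abs{N(x)}$ $\Rightarrow$ $p\mid\abs{G(x)}$ for all $x\in S(V)$ is the intended reading of the remark's relaxation of the Sylow hypothesis.
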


  As for sufficiency in \Cref{thm:solvable_groups},
  we will need the following lemma.

  \begin{lemma}\label{lemma:solvable_group3}
    Let $\rho$ be a $G$-representation with character $\chi$ and $H\leq G$.
    Then,
    \begin{description}[leftmargin=0pt]
    \item[\rm(i)] $\rho(\hat{\underline{H}})$ is an idempotent.
    \item[\rm(ii)] If, in addition, $\chi(\hat{\underline{H}})=0$,
      then both $\rho(\hat{\underline{H}})$ and 
      $\rho(\underline{H})$ are zero matrices.
    \end{description}
  \end{lemma}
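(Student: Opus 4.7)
The proof is essentially a computation in $\bq[G]$ followed by the standard trace-versus-rank identity for idempotents, so I would organize it into the two short steps below.

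For part (i), my plan is to work directly inside the group algebra. For any $h\in H$, left multiplication by $h$ permutes $H$, so $h\underline{H}=\underline{H}$, and summing over $h\in H$ gives $\underline{H}^2=\abs{H}\underline{H}$ in $\bz[G]$. Dividing by $\abs{H}^2$ yields $\hat{\underline{H}}^2=\hat{\underline{H}}$ in $\bq[G]$. Since $\rho:\bq[G]\to\mathrm{End}(V)$ is an algebra homomorphism (the extension explicitly set up in \Cref{subsec:repn}), applying $\rho$ gives $\rho(\hat{\underline{H}})^2=\rho(\hat{\underline{H}})$, which is the desired idempotency.

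For part (ii), I would exploit the fact that any idempotent linear operator on a finite-dimensional complex vector space is diagonalizable with eigenvalues in $\set{0,1}$, so its trace equals the dimension of its image. By hypothesis
\[
  \tr\rho(\hat{\underline{H}})=\chi(\hat{\underline{H}})=0,
\]
hence the image of $\rho(\hat{\underline{H}})$ is zero, i.e.\ $\rho(\hat{\underline{H}})=0$. Multiplying by the scalar $\abs{H}$ then gives $\rho(\underline{H})=\abs{H}\rho(\hat{\underline{H}})=0$.

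Since both parts reduce to an elementary identity in $\bq[G]$ together with the $\tr=\mathrm{rank}$ property of idempotents, there is no real obstacle; the only thing to be slightly careful about is that one is genuinely using the extension of $\rho$ to the group algebra (so that $\rho(\hat{\underline{H}})$ makes sense and is multiplicative on products of algebra elements), and that the equality $\tr\rho(\hat{\underline{H}})=\chi(\hat{\underline{H}})$ is just the definition of the extended character.
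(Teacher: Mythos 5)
Your proposal is correct and follows essentially the same route as the paper: the paper likewise notes that $\hat{\underline{H}}$ is an idempotent in $\bq[G]$ by direct computation (your identity $h\underline{H}=\underline{H}$ is exactly that computation spelled out), transfers idempotency through the algebra homomorphism $\rho$, and then uses the fact that an idempotent matrix with zero trace is the zero matrix, with $\rho(\underline{H})=\abs{H}\rho(\hat{\underline{H}})$ handling the last claim. No gaps.
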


  \begin{proof}
    Direct computation shows that
    $\hat{\underline{H}}\in\bq[G]$
    is an idempotent, therefore,
    so is $\rho(\hat{\underline{H}})$.
    If, in addition, $\chi(\hat{\underline{H}})=0$,
    i.e., $\rho(\hat{\underline{H}})$
    is an idempotent matrix with zero trace,
    then $\rho(\hat{\underline{H}})$ is a zero matrix.
    In this case,
    $\rho(\underline{H})=\abs{H}\rho(\hat{\underline{H}})$
    is also a zero matrix.
  \end{proof}

  \noindent
  The next result follows immediately from
  \Cref{lemma:solvable_group1,lemma:solvable_group3}
  (see also \Cref{rmk:solvable_group2}).

  \begin{coro}\label{coro:solvable_group}
    Let $\rho\in\irrnt(G)$ with $\alpha(\rho)>1$.
    Then, there exists a prime factor $p$ of $\abs{G}$
    such that $\rho(\underline{P})$ is a zero matrix
    for any $P\in\syl_p(G)$.
  \end{coro}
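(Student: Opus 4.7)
The plan is to chain together \Cref{lemma:solvable_group1} and \Cref{lemma:solvable_group3} through a prime divisor of $\alpha(\rho)$. Since $\alpha(\rho)>1$ by hypothesis, I would pick any prime $p$ dividing $\alpha(\rho)$; because each $G$-orbit in $S(V)$ has length dividing $|G|$, so does their gcd $\alpha(\rho)$, and hence $p$ is automatically a prime factor of $|G|$. In particular, the set $\syl_p(G)$ is non-empty.

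First I would apply \Cref{lemma:solvable_group1} to this $p$: the implication (i)$\Rightarrow$(iii) gives $\iprod{\resrep{\chi}{P}}{\trep{P}}=0$ for any $P\in\syl_p(G)$. By \Cref{rmk:solvable_group2}, this equality can be rewritten as $\chi(\hat{\underline{P}})=0$. I would then feed this vanishing into \Cref{lemma:solvable_group3}(ii), which tells us that the idempotent $\rho(\hat{\underline{P}})$ has trace zero and is therefore the zero matrix; consequently $\rho(\underline{P})=|P|\,\rho(\hat{\underline{P}})$ is zero as well.

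There is no serious obstacle; the two preceding lemmas were set up precisely to deliver this conclusion, and the corollary really is immediate once they are in hand. The only point worth a sentence of checking is that the statement quantifies over \emph{every} $P\in\syl_p(G)$. This is automatic: on the one hand, \Cref{lemma:solvable_group1}(iii) already holds uniformly in $P$ because $p\mid\alpha(\rho)$ is an intrinsic property of $\rho$; on the other hand, if $P'=gPg^{-1}$ is another Sylow $p$-subgroup, then $\underline{P'}=g\underline{P}g^{-1}$, so $\rho(\underline{P'})=\rho(g)\rho(\underline{P})\rho(g)^{-1}$ is zero as soon as $\rho(\underline{P})$ is. Either way, the conclusion transfers to all of $\syl_p(G)$.
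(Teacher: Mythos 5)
Your argument is exactly the one the paper intends: it presents the corollary as following immediately from \Cref{lemma:solvable_group1} and \Cref{lemma:solvable_group3} via \Cref{rmk:solvable_group2}, which is precisely the chain you spell out. Your additional checks (that $p\mid\abs{G}$ and that the conclusion holds uniformly over all Sylow $p$-subgroups) are correct and only make explicit what the paper leaves implicit.
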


  \noindent
  The following elementary statement is an immediate consequence 
  of the injectivity of a regular representation of a finite group.

  \begin{prop}\label{prop:solvable_group4}
    Let $\rho$ be the regular $G$-representation. Given
    two elements $x,y$ of the group algebra $\bq[G]$,
    if $\rho(x)=\rho(y)$, then $x=y$.
  \end{prop}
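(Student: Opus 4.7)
The plan is to use the fact that the regular representation's defining action of $\bq[G]$ on itself is \emph{faithful in a strong sense}, namely that $\rho$, viewed as an algebra homomorphism $\bq[G]\to\mathrm{End}(\bq[G])$, is injective. By linearity of $\rho$, it suffices to reduce the statement $\rho(x)=\rho(y)\Rightarrow x=y$ to the kernel statement $\rho(x)=0\Rightarrow x=0$.

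To establish the kernel statement, I would simply evaluate $\rho(x)$ at a well-chosen vector. The natural choice is the basis vector of $\bq[G]$ corresponding to the identity $e\in G$. Writing $x=\sum_{g\in G}a_g\,g$ with $a_g\in\bq$, the definition of the regular representation gives
\begin{equation*}
\rho(x)(e)\;=\;x\cdot e\;=\;\sum_{g\in G}a_g\,g.
\end{equation*}
If $\rho(x)=0$, then in particular $\rho(x)(e)=0$, so $\sum_g a_g\,g=0$ in $\bq[G]$. Since $\{g:g\in G\}$ is a $\bq$-basis of $\bq[G]$, all coefficients $a_g$ vanish, i.e., $x=0$.

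Applying this to $x-y$ in place of $x$ yields the proposition. There is no real obstacle here; the statement is essentially a tautology once one unwinds the definition of the regular representation, which is why the authors present it as an elementary companion to \Cref{coro:solvable_group} rather than a substantive lemma. Its role in the sequel is presumably to let one pass from an identity among matrices $\rho(\underline{P})=0$ back to an identity $\underline{P}=0$ in $\bq[G]$ (or more generally between group-algebra elements) so that algebraic manipulations in $\bq[G]$ can be carried out.
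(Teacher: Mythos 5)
Your proof is correct and matches the paper's treatment: the paper simply declares the proposition an immediate consequence of the injectivity (faithfulness) of the regular representation of $\bq[G]$ on itself, and your evaluation of $\rho(x-y)$ at the basis vector $e$ is exactly the standard one-line verification of that injectivity. No gap and no divergence in approach.
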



  We are now in a position to prove \Cref{thm:solvable_groups}.

  \begin{proof}[Proof of \Cref{thm:solvable_groups}]
    \begin{description}[leftmargin=0cm]
    \item[Necessity.]
      We will prove the necessity by induction.
      Clearly, \eqref{eq:dagger} is true for $\abs{G}=1$.
      For the inductive step, assume that $\eqref{eq:dagger}$
      is true for solvable groups of order less than $m$.
      Suppose $\abs{G}=m$.
      Let $N$ be a minimal normal subgroup of $G$.
      Then, $N$ is a $p$-subgroup
      (see \Cref{prop:solvable_minimal_normal}).
      If $N=G$, then the result follows
      (see \Cref{example:basic_A_groups}).
      Otherwise, if $N\neq G$, consider
      an arbitrary $\rho\in\irrnt(G)$.
      If $N$ is not contained in the kernel of $\rho$,
      then $p$ divides $\alpha(\rho)$
      (see \Cref{coro:nameless2})
      and hence, $\alpha(\rho)>1$.
      If $N$ is contained in the kernel of $\rho$,
      then $\rho$ can be viewed as a non-trivial
      irreducible $(G/N)$-representation.
      Since $G/N$ is solvable and $\abs{G/N}<m$,
      by inductive assumption,
      $\alpha(\rho)>1$.

    \item[Sufficiency.]
      Assume \eqref{eq:dagger} is true.
      Then, for any $\rho\in\irrnt(G)$,
      there exists a prime divisor $p$ (depending on $\rho$)
      such that $\rho(\underline{P})$ is a zero matrix for
      any $P\in\syl_p(G)$ (see \Cref{coro:solvable_group}).
      Let $(p_1,\dots,p_k)$ be a sequence of all distinct
      prime divisors of $\abs{G}$ (no matter what the order is).
      Take an arbitrary collection of Sylow subgroups
      $\set{P_i:P_i\in\syl_{p_i}(G)}_{i=1}^k$.
      We claim that
      $\rho(\underline{G})=\rho(\mathcal{P})$,
      where $\mathcal{P}=\underline{P_1}\cdots\underline{P_k}$,
      for any $\rho\in\irr(G)$.
      Indeed,

      \begin{enumerate}
      \renewcommand\labelenumi{(\alph{enumi})}
      \item if $\rho$ is trivial, then
        $\rho(\mathcal{P})=\abs{G}=\rho(\underline{G})$;
      \item if $\rho$ is non-trivial, then
        since $\rho(\underline{P_j})$ is a
        zero matrix for some $1\leq j\leq k$
        (see \Cref{lemma:solvable_group1}, \Cref{rmk:solvable_group2} and
        \Cref{lemma:solvable_group3}),
        so is $\rho(\mathcal{P})$.
        On the other hand, since $\rho\in\irrnt(G)$, it follows that
        $\chi(\hat{\underline{G}})=\iprod{\chi}{\trep{G}}=0$ and
        therefore, $\rho(\underline{G})$ is also a zero matrix
        (see \Cref{lemma:solvable_group3}).
      \end{enumerate}
   
      Then, $\underline{G}=\mathcal{P}$
      (see \Cref{prop:solvable_group4}),
      from which it follows $G=P_1\cdots P_k$.
      Since $P_j$ is arbitrarily taken from $\syl_{p_j}(G)$,
      it follows from \Cref{thm:KaplanLevy} that $G$ is solvable.
    \end{description}
  \end{proof}

  \begin{example}
    Since $A_5$ is not solvable,
    there exists $\rho\in\irrnt(A_5)$
    such that $\alpha(\rho)=1$.
    According to \Cref{tbl:a5_chartbl}, this is the only
    non-trivial irreducible representation of $A_5$
    with trivial $\alpha$-characteristic.
  \end{example}

  \subsection{$\alpha$-characteristic of Nilpotent Group Representations}
  If $G$ is a nilpotent group, then one can
  strengthen the necessity part of \Cref{thm:solvable_groups}
  as follows.

  \begin{prop}\label{prop:nilpotent_realize_alpha}
    If $G$ is a nilpotent group, then
    for any $\rho\in\irr(G)$,
    \begin{align}\label{eq:ddagger}
      \mbox{there exists $v\in S(V)$ such that
      $\alpha(\rho)=\abs{G(v)}$}.\tag{$\ddagger$}
    \end{align}
  \end{prop}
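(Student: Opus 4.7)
The plan is to reduce the statement to the case of a $p$-group via the structure theorem for nilpotent groups, where it becomes almost tautological, and then to lift a solution back to $G$ using a suitable tensor product.

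First I would write $G=P_1\times\cdots\times P_k$ as a direct product of its Sylow subgroups, which is possible since $G$ is nilpotent. Every $\rho\in\irr(G)$ then has the form $\rho_1\otimes\cdots\otimes\rho_k$ on $V=V_1\otimes\cdots\otimes V_k$ with $\rho_i\in\irr(P_i)$. Restricting $\rho$ to the embedded Sylow subgroup $P_i\hookrightarrow G$ (as the $i$th factor) gives a $P_i$-representation isomorphic to a direct sum of copies of $\rho_i$; by \Cref{prop:alpha_func}(e), $\alpha(\rho\vert_{P_i})=\alpha(\rho_i)$. Combining this with \Cref{lemma:solvable_group1} yields $\alpha(\rho)_{p_i}=\alpha(\rho_i)$, and since prime divisors of $\alpha(\rho)$ lie among the $p_i$, I obtain the multiplicative decomposition
\[
  \alpha(\rho)=\prod_{i=1}^k\alpha(\rho_i).
\]

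Next I would handle the $p$-group case, which is the easy part: for a $p$-group $P_i$ acting on the finite set $S(V_i)$ all orbit sizes $\abs{P_i(v_i)}$ are powers of $p_i$, so $\alpha(\rho_i)$, being the $\gcd$ of finitely many $p_i$-powers, coincides with the \emph{minimum} of those orbit sizes and is therefore realized by some $v_i\in S(V_i)$ with $\abs{P_i(v_i)}=\alpha(\rho_i)$.

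Finally, I would assemble $v:=v_1\otimes\cdots\otimes v_k$ (rescaled to lie in $S(V)$) and show $\abs{G(v)}=\prod_i\abs{P_i(v_i)}$. The upper bound follows at once from $G_v\supseteq\prod_i(P_i)_{v_i}$. For the lower bound, the subgroup $P_i\leq G$ acts on $v$ solely through its action on the $i$th tensor slot, so its orbit through $v$ has size exactly $\abs{P_i(v_i)}$; hence each $\abs{P_i(v_i)}$ divides $\abs{G(v)}$, and since the integers $\abs{P_i(v_i)}$ are pairwise coprime (distinct prime powers), their product divides $\abs{G(v)}$. Combining the two bounds gives $\abs{G(v)}=\prod_i\abs{P_i(v_i)}=\prod_i\alpha(\rho_i)=\alpha(\rho)$, as desired.

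The only step requiring genuine care is the orbit-size computation for the tensor product: a priori, tensor scalars of product $1$ could enlarge the stabilizer beyond $\prod(P_i)_{v_i}$, which is why I bound $\abs{G(v)}$ from both sides rather than identifying $G_v$ outright. Everything else follows from the functorial properties in \Cref{prop:alpha_func} and \Cref{lemma:solvable_group1} already established.
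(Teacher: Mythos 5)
Your proof is correct and follows essentially the same route as the paper's: decompose $G$ into its Sylow factors, realize $\alpha(\rho_i)$ in each $p$-group factor (where the gcd of the $p$-power orbit lengths is their minimum), and show that the tensor product of the realizing vectors realizes $\alpha(\rho)=\prod_i\alpha(\rho_i)$ using coprimality. The paper organizes this as an induction on the number of prime divisors and computes $\abs{(G_1\times G_2)(v_1\otimes v_2)}$ by an explicit injectivity argument (\Cref{lemma:nilpotent2}), whereas you use a two-sided divisibility bound; your lower-bound step (``each $\abs{P_i(v_i)}$ divides $\abs{G(v)}$'') is valid precisely because each $P_i$ is a \emph{normal} (direct) factor, so all $P_i$-orbits inside $G(v)$ have equal length --- worth saying explicitly, since orbit lengths of non-normal subgroups need not divide the ambient orbit length.
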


  \noindent
  We say that $\alpha(\rho)$ is {\em realized by
  the orbit $G(v)$} or simply {\em realizable}
  if it satisfies \eqref{eq:ddagger}.
  The proof of \Cref{prop:nilpotent_realize_alpha}
  is based on the following two lemmas.

  \begin{lemma}\label{lemma:nilpotent1}
    Let $\rho:A\to\gl(V)$ and $\psi:B\to\gl(W)$
    be two $G$-representations.
    Then, 
    $\mathrm{lcm}(\alpha(\rho),\alpha(\psi))$ divides
    $\alpha(\rho\otimes\psi)$ and $\alpha(\rho\otimes\psi)$
    divides $\alpha(\rho)\alpha(\psi)$.
  \end{lemma}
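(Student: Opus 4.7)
The statement is most naturally read with $V\otimes W$ being the outer tensor product, i.e.~an $A\times B$-representation via $(a,b)\cdot(v\otimes w):=\rho(a)v\otimes\psi(b)w$, and I will proceed under this interpretation (the diagonal interpretation would already fail for $V=\bc_\chi$, $W=\bc_{\chi^{-1}}$ over a cyclic group). My plan is to treat the two divisibilities separately, exploiting the central embedding of $A\times\set{e}$ in $A\times B$.

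For the upper bound $\alpha(\rho\otimes\psi)\mid\alpha(\rho)\alpha(\psi)$, I would restrict attention to pure tensors $v\otimes w$ with $v\in S(V)$, $w\in S(W)$. Directly from the action one reads off $A_v\times B_w\leq(A\times B)_{v\otimes w}$, so $\abs{(A\times B)(v\otimes w)}$ divides $\abs{A(v)}\cdot\abs{B(w)}$. Since $\alpha(\rho\otimes\psi)$ is a $\gcd$ over all unit vectors (in particular over pure tensor ones), it must divide each such product. The remaining routine step is the identity $\gcd_{v,w}\abs{A(v)}\abs{B(w)}=\alpha(\rho)\alpha(\psi)$, which I would obtain by a one-line Bezout argument applied to one factor at a time.

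For the lower bound $\mathrm{lcm}(\alpha(\rho),\alpha(\psi))\mid\alpha(\rho\otimes\psi)$, by symmetry it suffices to show that $\alpha(\rho)$ divides $\abs{(A\times B)(z)}$ for every $z\in S(V\otimes W)$. The key observation is that, viewed as an $A$-representation via restriction to $A\times\set{e}$, $V\otimes W$ is canonically isomorphic to a direct sum of $\dim W$ copies of $V$. Iterating \Cref{prop:alpha_func}(e) then gives that the $\alpha$-characteristic of this $A$-representation equals $\alpha(\rho)$, so the $A$-orbit of $z$ has length $[A:A_z]$ divisible by $\alpha(\rho)$.

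The final, and I expect only genuinely delicate, step is to connect this $A$-orbit length to the full $(A\times B)$-orbit length of $z$. Here I plan to use the fact that $A\times\set{e}$ is not merely normal in $A\times B$ but is centralized elementwise by $\set{e}\times B$; a short calculation then yields $(A\times\set{e})_{(e,b)z}=(A\times\set{e})_z$ for every $b\in B$, so the $(A\times B)$-orbit of $z$ partitions into $(A\times\set{e})$-orbits all of the same length $[A:A_z]$. This forces $[A:A_z]\mid\abs{(A\times B)(z)}$, hence $\alpha(\rho)\mid\abs{(A\times B)(z)}$; the $B$-side is entirely symmetric, and taking $\gcd$ over $z$ concludes the argument.
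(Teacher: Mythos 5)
Your argument is correct and matches the paper's proof essentially step for step: the upper bound via pure tensors, the inclusion $A_v\times B_w\leq(A\times B)_{v\otimes w}$, and the multiplicativity of the $\gcd$; the lower bound via restriction to $A\times\set{e}$, under which $V\otimes W$ becomes $\dim W$ copies of $V$ with $\alpha$-characteristic $\alpha(\rho)$. The only difference is that your ``delicate'' final step needs no centrality at all --- it is exactly \Cref{prop:alpha_func}(a), which the paper simply cites, and which holds for an arbitrary subgroup because any orbit of the big group partitions into orbits of the subgroup, each of length divisible by the subgroup's $\alpha$-characteristic.
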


  \begin{proof}
    Take $\tilde{A}:=A\times\trp<A\times B$.
    Then, $\resrep{(\rho\otimes\psi)}{\tilde{A}}$
    is equivalent to the direct sum of
    $\dim W$ copies of $\rho$ and hence,
    $\alpha(\resrep{(\rho\otimes\psi)}{\tilde{A}})=\alpha(\rho)$.
    Now \Cref{prop:alpha_func}(a) implies
    $\alpha(\rho)\mid\alpha(\rho\otimes\psi)$.
    Similarly, $\alpha(\psi)\mid\alpha(\rho\otimes\psi)$.
    It follows that
    $\mathrm{lcm}(\alpha(\rho),\alpha(\psi))\mid\alpha(\rho\times\psi)$.

    Let $v\in V,w\in W$ be non-zero vectors.
    Then, one has
    $A_v\times B_w\leq (A\times B)_{v\otimes w}\leq A\times B$,
    which implies that $\abs{(A\times B)(v\otimes w)}$ divides
    $\abs{A(v)}\cdot\abs{B(w)}$.
    Therefore, $\alpha(\rho\otimes\psi)$ divides
    every product $\abs{A(v)}\cdot\abs{B(w)}$.
    This implies that $\alpha(\rho\otimes\psi)$ divides
    \begin{align*}
      &\mathrm{gcd}\set{|A(v)|\cdot |B(w)|:v\in S(V),w\in S(W)}\\
      =\;&\mathrm{gcd}\set{\mathrm{gcd}\set{\abs{A(v)}:v\in S(V)}\cdot
          \abs{B(w)}:w\in S(W)}\\
      =\;&\mathrm{gcd}\set{|A(v)|:v\in S(V)}\cdot
          \mathrm{gcd}\set{|B(w)|:w\in S(W)}\\
      =\;&\alpha(\rho)\alpha(\psi).
    \end{align*}
  \end{proof}

  \begin{remark}
  If the orders $|A|$ and $|B|$ are coprime,
  then the numbers $\alpha(\rho),\alpha(\psi)$ are coprime too,
  and, therefore,
  $\mathrm{lcm}(\alpha(\rho),\alpha(\psi))=\alpha(\rho)\alpha(\psi)$
  implying $\alpha(\rho\otimes\psi)=\alpha(\rho)\alpha(\psi)$.
  If the group orders are not coprime,
  then it could happen that $\alpha(\rho\otimes\psi)$ satisfy 
  $\mathrm{lcm}(\alpha(\rho),\alpha(\psi))<\alpha(\rho\otimes\psi)
  <\alpha(\rho)\alpha(\psi) $.
  As an example, one could take $A=B$ to be an extra special group
  of order $p^3,p$ is an odd prime.
  This group has $p-1$ Galois conjugate representations of dimension $p$.
  Each of these representations is induced
  from a one-dimensional
  representation of $\bz_p\times\bz_p$, from which it follows
  that the $\alpha$-characteristic of each representation
  is equal to $p^2$.
  Let $\rho$ be one of these representations.
  Then, the irreducible repesentation $\rho\otimes\rho$
  of $A\times A$ has $\alpha$-characteristic equal to $p^3$.
  \end{remark}

  \begin{lemma}\label{lemma:nilpotent2}
    Let $\rho:A\to\gl(V)$ and $\psi:B\to\gl(W)$
    be two $G$-representations with
    $\gcd\set{\abs{A},\abs{B}}=1$.
    Then, $\abs{(A\times B)(v\otimes w)}=
    \abs{A(v)}\cdot\abs{B(w)}$
    for any $v\in S(V)$ and $w\in S(W)$.
  \end{lemma}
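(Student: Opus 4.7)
The plan is to show that the stabilizer $(A\times B)_{v\otimes w}$ equals $A_v\times B_w$; once this is established, the orbit size statement follows immediately from
$$|(A\times B)(v\otimes w)|=\frac{|A||B|}{|A_v\times B_w|}=|A(v)|\cdot|B(w)|.$$
The inclusion $A_v\times B_w\subseteq (A\times B)_{v\otimes w}$ is obvious, so the whole content is the reverse inclusion, which is where coprimality enters.

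First I would take $(a,b)\in (A\times B)_{v\otimes w}$ and unpack what $av\otimes bw=v\otimes w$ means in the tensor product $V\otimes W$. Since $v\neq 0$ and $w\neq 0$, an elementary uniqueness argument for rank-one tensors shows that $av=\lambda v$ and $bw=\lambda^{-1}w$ for some scalar $\lambda\in\mathbb{C}^\times$. So $v$ is an eigenvector of $a$ with eigenvalue $\lambda$ and $w$ is an eigenvector of $b$ with eigenvalue $\lambda^{-1}$.

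Next, I would exploit the finiteness of the groups. Since $a^{|A|}=e_A$ in $A$, we get $\lambda^{|A|}=1$, and similarly $b^{|B|}=e_B$ gives $\lambda^{|B|}=1$. The hypothesis $\gcd(|A|,|B|)=1$ then yields integers $x,y$ with $x|A|+y|B|=1$, so $\lambda=\lambda^{x|A|+y|B|}=1$. Hence $av=v$ and $bw=w$, i.e.\ $(a,b)\in A_v\times B_w$, completing the reverse inclusion.

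I do not foresee a real obstacle here; the only mildly delicate point is the rank-one tensor uniqueness (which is standard: if $v'\otimes w'=v\otimes w$ with $v,w$ nonzero, the two must differ by inverse scalars). The coprimality of $|A|$ and $|B|$ is used in exactly one place, namely to force $\lambda=1$ from $\lambda^{|A|}=\lambda^{|B|}=1$, and without it the lemma genuinely fails (as the remark preceding the statement already illustrates via $A=B$ extraspecial of order $p^3$).
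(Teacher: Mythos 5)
Your proof is correct and uses essentially the same argument as the paper: both reduce to the fact that equality of rank-one tensors forces $av=\lambda v$ and $bw=\lambda^{-1}w$, and then use $\gcd(\abs{A},\abs{B})=1$ to conclude $\lambda=1$ since its order divides both group orders. The only cosmetic difference is that you phrase this as computing the stabilizer $(A\times B)_{v\otimes w}=A_v\times B_w$, whereas the paper shows the map $(av,bw)\mapsto av\otimes bw$ is injective on $A(v)\times B(w)$; these are equivalent packagings of the same step.
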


  \begin{proof}
    Suppose $\gcd\set{\abs{A},\abs{B}}=1$.
    Note that by definition,
    \begin{align*}
      (A\times B)(v\otimes w)=A(v)\otimes B(w)
          :=\set{av\otimes bw:a\in A,\;b\in B}
    \end{align*}
    for any $v\in V$ and $w\in W$.
    Hence, it suffices to show that
    the map $(av,bw)\mapsto av\otimes bw$
    is an injection from $A(v)\times B(w)$
    to $A(v)\otimes B(w)$ for
    any $v\in S(V)$ and $w\in S(W)$.

    Suppose that $a_1v\otimes b_1w=a_2v\otimes b_2w$
    for some $a_1,a_2\in A$ and $b_1,b_2\in B$.
    This is true if and only if
    \begin{align*}
      a_2v=\lambda a_1v\quad\mbox{and}\quad
          b_2w=\lambda^{-1} b_1w
    \end{align*}
    or, equivalently,
    \begin{align*}
      {a_1}^{-1}a_2v=\lambda v\quad\mbox{and}\quad
          {b_2}^{-1}b_1w=\lambda w
    \end{align*}
    for some $\lambda\in\bc$.  Hence,
    $\ord(\lambda)$ divides both
    $\ord({a_1}^{-1}a_2)$ and $\ord({b_2}^{-1}b_1)$,
    which are factors of $\abs{A}$ and $\abs{B}$,
    respectively.
    It follows that $\ord(\lambda)=1$ and hence, $\lambda=1$.
    Therefore, $a_1u=a_2u$ and $b_1u=b_2u$ and the result follows.
  \end{proof}

  We are now in a position to prove
  \Cref{prop:nilpotent_realize_alpha}.

  \begin{proof}[Proof of \Cref{prop:nilpotent_realize_alpha}]



    We use induction on the number $k$ of distinct prime divisors
    of $\abs{G}$, i.e., $\abs{G}=\prod_{i=1}^k{{p_i}^{j_i}}$.

    If $k=1$, then $G$ is a $p$-group and, therefore,
    the length of every $G$-orbit is a power of $p$.
    Hence, $\alpha(\rho)=\min_{v\in S(V)}\set{\abs{G(v)}}$.

    Assume now that $k\geq 2$ and $p_1,\dots,p_k$ are the
    prime divisors of $\abs{G}$. Then, the group $G$ is a
    direct product of its Sylow subgroups $P_i$, $i=1,\dots,k$,
    where $P_i$ is a Sylow $p_i$-subgroup of $G$.
    Therefore, $G\cong G_1\times G_2$ where
    $G_1=P_1$ and $G_2=P_2\times\dots\times P_k$.
    Pick an arbitrary $\rho\in\irr(G)$.
    Then, $\rho$ is equivalent to $\psi_1\otimes\psi_2$
    where $\psi_1\in\irr(G_1)$ and $\psi_2\in\irr(G_2)$
    (see \cite{Serre}).

    Hence, it suffices to show that
    $\alpha(\psi_1\otimes\psi_2)$ is
    realizable for any $\psi_j\in\irr(G_j)$
    ($\psi_j:G_j\to\gl(V_j)$), $j=1,2$.
    By inductive assumption,
    $\alpha(\psi_j)=\abs{G_j(v_j)}$
    for some $v_j\in S(V_j)$, $j=1,2$.
    Since $\gcd\set{\abs{G_1},\abs{G_2}}=1$,
    \begin{align}\label{eq:nilpotent_realize_alpha_proof1}
      \abs{(G_1\times G_2)(v_1\otimes v_2)}
          =\abs{G_1(v_1)}\cdot\abs{G_2(v_2)}
          =\alpha(\psi_1)\cdot\alpha(\psi_2)
    \end{align}
    (see \Cref{lemma:nilpotent2}). On the other hand,
    one has
    \begin{align}\label{eq:nilpotent_realize_alpha_proof2}
      \alpha(\psi_1)\cdot\alpha(\psi_2)\divides
      \alpha(\psi_1\otimes\psi_2)\divides
      \abs{(G_1\times G_2)(v_1\otimes v_2)}
    \end{align}
    (see \Cref{lemma:nilpotent1}).
    Combining \eqref{eq:nilpotent_realize_alpha_proof1}
    and \eqref{eq:nilpotent_realize_alpha_proof2}
    yields
    \begin{align*}
      \alpha(\psi_1\otimes\psi_2)
        =\abs{(G_1\times G_2)(v_1\otimes v_2)}.
    \end{align*}

    \noindent
    The result follows.
  \end{proof}

  \begin{example}
    In general, for a solvable group $G$,
    $\alpha(\rho)$ may not be realizable for
    some $\rho\in\irr(G)$:
    Let $\rho$ be the 3-dimensional
    irreducible $A_4$-representation.
    Then, $\Phi(\rho)= \set{(\bz_1),(\bz_2),(\bz_3)}$,
    and $\alpha(\rho)=2$ is not realizable.
  \end{example}
    
  \begin{example}
    If $G$ is a $p$-group,
    then $\alpha(\rho)$ is realizable for
    {\em any} $G$-representation $\rho$,
    which is not the case if $G$ is a
    nilpotent group but not a $p$-group.
    In fact, consider the (reducible) $\bz_6$-representation
    $\rho:=\psi_1\otimes\trep{\bz_3}
    \oplus\trep{\bz_2}\otimes\psi_2$,
    where $\psi_1$ and $\psi_2$ are arbitrary
    non-trivial irreducible representations
    of $\bz_2$ and $\bz_3$, respectively.
    Then, $\Phi(\rho)=\set{(\bz_1),(\bz_2),(\bz_3)}$,
    and $\alpha(\rho)=1$ is not realizable.
  \end{example}

  \section{$\alpha$-characteristic of an augmentation module related to $2$-transitive Group actions}
  \label{sec:4}

  The following example is the
  starting point of our discussion.

  \begin{example}\label{example:a5}
    Note that $A_5$, the smallest {\em non-solvable} group,
    is a $2$-transitive group (see \Cref{def:2trans_group,def:aug_subm}).
    To be more explicit, $A_5$ admits two non-equivalent irreducible
    augmentation representations $\psi_3=\rho^a_{[A_5;A_4]}$
    and $\psi_4=\rho^a_{[A_5;D_5]}$
    (see \Cref{tbl:a5_chartbl}).
    Since $\alpha(\psi_3)=5$ while $\alpha(\psi_4)=1$,
    we arrive at the following question:
    given an augmentation submodule $\rho\in\varrho_2(G)$
    associated to the $2$-transitive $G$-action on $G/H$
    by left translation, under which conditions
    does one have $\alpha(\rho)>1$?
  \end{example}

  \subsection{2-transitive Groups}
  \label{subsec:2trans_groups}
  Let $G$ be a $2$-transitive group acting faithfully on a set $X$.
  According to Burnside Theorem (see \cite[Theorem 4.1B]{DM}),
  the socle $S$ of $G$ is either a non-abelian simple group or
  an elementary abelian group which acts regularly on $X$.
  Thus, 2-transitive groups are naturally divided into two classes

  \begin{itemize}
\item {\em Almost simple groups.}
    $G$ is called {\em almost simple} if
    $S\leq G\leq\aut(S)$ for some
    non-abelian simple group $S$.
  \item {\em Affine groups.}
    If $S$ is elementary abelian, then $G$ admits the following description.

    Let $V$ be a $d$-dimensional vector space over
    a finite field $\ff$. A group
    $G$ is called {\em affine} if
    $V\leq G\leq\aff(V)$, where
    $V$ is considered as an additive group
    and $\aff(V)$ is the group of all
    invertible affine transformations
    of $V$. A group $G$ admits a decompostion
    $G= V G_o$ where $G_o = G\cap\gl(V) $ is a zero stabilizer in $G$.
    Thus, $G\cong V\rtimes G_o$  ($\rtimes$ stands
    for the semidirect product).
    The group $G$ acts 2-transitively on $V$ if and only if $G_o$
    acts transitively on the set of non-zero vectors of $V$.
    In this case, $V$ is the socle of $G$.
  
  \end{itemize}


  \begin{remark}\label{rmk:2trans1}
  Note that a solvable $2$-transitive group is always affine.
  However, the converse is not true: for example, the full affine group
  $\aff(V)$ of the vector space $V$ is solvable if and only if $\gl(V)$ is.
  The latter happens only when $d=1$ or $d=2$ and $\abs{\ff}\in\{2,3\}$.
  \end{remark}

  \subsection{Main Result}
  Our main result provides a complete description of
  all augmentation modules related to 2-transitive 
  group actions with non-trivial $\alpha$-characteristic.  

  \begin{thm}\label{thm:2trans_groups}
    Let $(G;X)$ be a $2$-transitive group action.
    \begin{enumerate}
    \renewcommand{\labelenumi}{(\roman{enumi})}
    \item If $G$ is affine and acts 
      faithfully on $X$, then $\alpha(\rho_{(G;X)}^a)>1$.
    \item If $G$ is almost simple, then
      all $\rho\in\varrho_2(G)$ satisfying
      $\alpha(\rho)>1$ are described in
      \Cref{tbl:almost_simple_2trans}
      provided that $\abs{X}$ is a prime power.
    \end{enumerate}
  \end{thm}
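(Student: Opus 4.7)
The plan is to reduce both parts of the theorem to the combinatorial criterion already recorded in \Cref{lemma:2trans_groups}: for a 2-transitive action $(G;X)$, one has $\alpha(\rho^a_{(G;X)}) > 1$ if and only if $|X|$ is a prime power. Under this reduction, part (i) becomes an immediate consequence of the definition of ``affine,'' while part (ii) becomes a finite case-check built on top of the CFSG-based classification of 2-transitive groups.

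The core of the criterion combines \Cref{lemma:solvable_group1} --- a prime $p$ divides $\alpha(\rho)$ exactly when $V^P = 0$ for some $P \in \syl_p(G)$ --- with the identification of $\dim V^P$ for the augmentation as the number of $P$-orbits on $X$ minus one. Thus $p \mid \alpha(\rho^a_{(G;X)})$ is equivalent to transitivity of $P$ on $X$. An orbit-stabilizer estimate (namely $|P_x| \leq |G_x|_p = |P|/|X|$ whenever $|X|=p^k$) shows that a Sylow $p$-subgroup of a group transitive on $p^k$ points is itself transitive; conversely, transitivity of a $p$-group forces $|X|$ to be a $p$-power. Aggregating over primes yields the criterion.

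Given the criterion, part (i) is immediate: an affine 2-transitive group has $|X|=|V|=q^d$ by construction, so $\alpha(\rho^a_{(G;X)})>1$. A more hands-on alternative uses \Cref{coro:nameless2}: $V$ is a normal $p$-subgroup of $G$ acting non-trivially on the augmentation (faithfulness of $G$ on $X$ rules out triviality), hence $p$ divides $\alpha(\rho^a_{(G;X)})$.

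For part (ii), the criterion turns the problem into a pure group-theoretic census: list all 2-transitive almost simple groups $G$ whose degree $|X|$ is a prime power. I would invoke the CFSG-based classification of 2-transitive groups (as in \cite{Cameron}) and inspect each family: the symmetric and alternating actions $A_n \le G \le S_n$ on $n$ points (retained when $n$ is a prime power); the projective groups $\psl_d(q) \le G \le \mathrm{P}\Gamma\mathrm{L}_d(q)$ acting on $(q^d-1)/(q-1)$ points (retained for the scattered prime-power values of this expression, including $q+1$ when $d=2$); the unitary $\mathrm{PSU}_3(q)$, Suzuki $\mathrm{Sz}(q)$ and Ree $\mathrm{Ree}(q)$ families with their natural 2-transitive actions; and the sporadic 2-transitive actions of the Mathieu, Higman--Sims, Conway and related groups. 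Each surviving entry contributes a row of \Cref{tbl:almost_simple_2trans}. The main obstacle is the bookkeeping: ensuring the census is complete (which rests on CFSG) and carrying out the elementary but non-trivial arithmetic of deciding when $(q^d-1)/(q-1)$ is a prime power.
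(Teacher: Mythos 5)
Your proposal is correct and follows essentially the same route as the paper: reduce everything to the criterion of \Cref{lemma:2trans_groups} (whose proof you reconstruct via the same Sylow-orbit/fixed-space argument), deduce (i) from the fact that the elementary abelian normal socle forces $\abs{X}$ to be a prime power (your alternative via \Cref{coro:nameless2} is a minor variant of the paper's Burnside-regularity argument), and settle (ii) by inspecting the CFSG-based table of almost simple $2$-transitive actions of prime-power degree, exactly as the paper does with Table~7.4 of \cite{Cameron}.
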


    \begin{table}[h!]
    \tiny
    \centering
    \begin{tabularx}{\textwidth}{@{}lllll@{}}
      \toprule
      $\abs{X}$ & condition & $N$ & $\max\abs{G/N}$ &
          \# of non-equivalent actions \\
      \midrule
      $2$ & $n\geq 5$ & $A_n$ & $2$ & $1$ \\
      $n$ & $n\geq 5$ & $A_n$ & $2$ & $4$ if $n=6$; $2$ otherwise \\
      $(q^d-1)/(q-1)$ & $d\geq2$, $(d,q)\neq(2,2),(2,3)$ &
          $\psl(d,q)$ & $\gcd\set{d,q-1}\cdot e$ &
          $2$ if $d>2$; $1$ otherwise \\
      $11$ & & $\psl(2,11)$ & $1$ & $2$ \\
      $11$ & & $M_{11}$ & $1$ & $1$ \\
      $23$ & & $M_{23}$ & $1$ & $1$ \\
      \midrule
      \multicolumn{5}{l}{$q=p^e$ for some prime $p$ and $e\in\bn$;
          $N$ is the socle of $G$;
          $M_{11}$ and $M_{23}$ are Mathieu groups} \\
      \bottomrule
    \end{tabularx}
    \caption{almost simple $2$-transitive
        groups with $\rho\in\varrho_2(G)$
        satisfying $\alpha(\rho)>1$}
    \label{tbl:almost_simple_2trans}
  \end{table}

  \noindent
  The proof of \Cref{thm:2trans_groups} is
  based on the classification of finite
  2-transitive groups (see \cite{Cameron})
  and the following lemma.

  \begin{lemma}\label{lemma:2trans_groups}
    Let $H$ be a proper subgroup of $G$.
    Then, $\alpha(\rho_{[G;H]}^a)>1$
    if and only if $\subgind{G}{H}$
    is a prime power. In the latter case,
    $p$ divides $\alpha(\rho_{[G;H]}^a)$ where
    $p$ is the unique prime divisor of $\subgind{G}{H}$.
  \end{lemma}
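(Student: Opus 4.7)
The plan is to translate the statement about $\alpha(\rho_{[G;H]}^a)$ into a statement about orbits of Sylow $p$-subgroups on the coset space $G/H$, combining \Cref{lemma:solvable_group1} with a short Sylow-style computation.

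First, using \Cref{prop:2trans_perep}, I identify the permutation representation $\bc[G/H]$ with $\trep{G}\oplus\rho_{[G;H]}^a$; the standard identity for permutation characters then gives $\dim V^P = (\text{number of }P\text{-orbits on }G/H) - 1$ for any subgroup $P\leq G$, where $V$ denotes the space of $\rho_{[G;H]}^a$. Coupling this with the equivalence (i)$\Leftrightarrow$(iii) of \Cref{lemma:solvable_group1}, one obtains the key reformulation: for a prime $p$ dividing $\abs{G}$ and any $P\in\syl_p(G)$,
\begin{align*}
p \mid \alpha(\rho_{[G;H]}^a) \iff V^P = 0 \iff P \text{ acts transitively on } G/H.
\end{align*}

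Next, I would show that $P\in\syl_p(G)$ acts transitively on $G/H$ if and only if $\subgind{G}{H}$ is a $p$-power. The ``only if'' direction is immediate from orbit--stabilizer, which forces $\subgind{G}{H}$ to divide $\abs{P}$. For the ``if'' direction, if $\subgind{G}{H}=p^k$, write $\abs{G}_p=\abs{H}_p\cdot p^k$; since $P\cap H$ is a $p$-subgroup of $H$, one has $\abs{P\cap H}\leq \abs{H}_p$, so the $P$-orbit of $H\in G/H$ has size $\abs{P}/\abs{P\cap H}\geq p^k=\subgind{G}{H}$, forcing the orbit to be all of $G/H$.

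Finally, combining the two equivalences gives the lemma. If $\subgind{G}{H}=p^k$ is a prime power, then the unique prime divisor $p$ of $\subgind{G}{H}$ divides $\alpha(\rho_{[G;H]}^a)$, yielding both $\alpha(\rho_{[G;H]}^a)>1$ and the supplementary claim about the divisor. Conversely, if $\subgind{G}{H}$ is not a prime power, then no prime $p$ can make a Sylow $p$-subgroup act transitively on $G/H$ (as that would force $\subgind{G}{H}$ to be a power of $p$), so no prime divides $\alpha(\rho_{[G;H]}^a)$, forcing $\alpha(\rho_{[G;H]}^a)=1$. There is no serious obstacle; the content lies in the observation that \Cref{lemma:solvable_group1} reduces the problem entirely to the transitivity of Sylow subgroups on $G/H$, which is then settled by the elementary $\abs{P\cap H}$ bound above.
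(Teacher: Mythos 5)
Your proof is correct and follows essentially the same route as the paper: both reduce the statement, via \Cref{lemma:solvable_group1} and the orbit-counting identity for the permutation character, to whether a Sylow $p$-subgroup acts transitively on $G/H$. The only difference is cosmetic — you spell out the elementary $\abs{P\cap H}\leq\abs{H}_p$ computation showing that $\subgind{G}{H}=p^k$ forces Sylow transitivity, a fact the paper simply asserts.
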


  \begin{proof}
    If $[G:H]$ is not a prime power, then for each prime divisor
    $p$ of $|G|$ a Sylow subgroup $P\in\syl_p(G)$ has at
    least two orbits on $G/H$. Therefore, $[(1_H^G)_P,\trep{P} ]\geq2$
    implying $[\rho_{[G;H]}^a,\trep{P}]>0$.
    By \Cref{lemma:solvable_group1},
    $p$ does not divide $\alpha(\rho_{[G;H]}^a)$.
    Since this holds for any prime divisor of $|G|$, 
    we conclude that $\alpha(\rho_{[G;H]}^a) = 1$.

    Conversely, suppose $\abs{G:H}=p^k$ for some prime $p$.
    Then, a Sylow $p$-subgroup $P\in\syl(G)$ acts transitively
    on the coset space $G/H$ implying that
    $[\trep{H}^G,\trep{P}]=1$. Therefore 
    $[\rho_{[G;H]}^a,\trep{P}]=0$ and we are done by
    \Cref{lemma:solvable_group1}.
  \end{proof}



%

 \begin{remark}\label{rmk:2trans_groups}
    Note that although any non-trivial
    irreducible representation of a solvable group admits
    a non-trivial $\alpha$-characteristic (see \Cref{thm:solvable_groups}),
    it may not be true for their direct sum
    (see \Cref{prop:alpha_func}(e)).
    However, by \Cref{lemma:2trans_groups}, an augmentation
    submodule associated to a transitive $G$-set of order prime power
    would admit non-trivial $\alpha$-characteristic. In particular,
    the $\alpha$-characteristic of every non-trivial irreducible
    constituent of the augmentation submodule is non-trivial.
  \end{remark}

  Now we can prove the aforementioned \Cref{thm:2trans_groups}.

  \begin{proof}[Proof of \Cref{thm:2trans_groups}]
    If $G$ is an affine $2$-transitive group,
    then its socle $S$ is an elementary abelian group which
    acts faithfully on $X$. By the Burnside Theorem,
    $S$ acts regularly on $X$. Therefore, $|X|=|S|$ is a prime power
    and we are done by \Cref{lemma:2trans_groups}.
   
    If $G$ is an almost simple
    $2$-transitive group, then all $2$-transitive
    $G$-sets of power prime order are obtained by the inspection
    of Table 7.4 from \cite{Cameron}, which yields
    \Cref{tbl:almost_simple_2trans}.
  \end{proof}

  \begin{remark}
    For the complete description of 2-transitive faithful
    actions of affine groups, we refer to Table 7.3
    in \cite{Cameron}.
  \end{remark}


  \subsection{Examples}
  In this subsection, we will give some
  concrete examples of $2$-transitive groups
  illustrating \Cref{thm:2trans_groups} and
  \Cref{rmk:2trans_groups}.

  \begin{example}\label{ex:gl2}
    The group $G:=\mathrm{AGL}_3(2) = \Z_2^3\rtimes\gl(3,2)$
    is a non-solvable 2-transitive affine group
    (see \Cref{rmk:2trans1})
    with four augmentation representations (see \Cref{tbl:gl_chartbl})
    arising from 2-transitive actions.
    By \Cref{thm:2trans_groups} (i),
    $\alpha(\rho)>1$ for all $\rho\in\varrho_2(G)$.
  \end{example}

  \begin{example}\label{ex:s5}
    The group $S_5$ is an almost simple group with three
    2-transitive actions: $S_5/A_5$, $S_5/S_4$ and $S_5/\aff_1(5)$.
    The corresponding augmentation representations are denoted by
    $\xi_1$, $\xi_3$ and $\xi_6$ in \Cref{tbl:s5_chartbl}.
    According to Lemma \ref{lemma:2trans_groups},
    only $\xi_1=\rho^a_{[S_5;A_5]}$ and
    $\xi_6=\rho^a_{[S_5;S_4]}$ admit non-trivial
    $\alpha$-characteristics.
  \end{example}

  \begin{remark}\label{rmk:s5}
    In some cases, \Cref{thm:2trans_groups}
    can still help one to determine whether $\alpha(\rho)$
    is trivial even when $\rho$ is
    not an augmentation representation.
    For example, $S_5$ admits
    two 4-dimensional irreducible
    representations $\xi_2$ and $\xi_6$
    (see \Cref{tbl:s5_chartbl}).
    Note that $\xi_6$ is an augmentation
    representation related to a $2$-transitive action while $\xi_2$ is not.
    Let $V$ and $V^-$ be $S_5$-modules corresponding to
    $\zeta_6$ and $\zeta_2$, respectively. In \Cref{sec:6}, we will
    show that there exists an admissible
    equivariant map from $V^-$
    to $V$ from which it follows that
    $\alpha(\xi_2)\geq\alpha(\xi_6)>1$
    (see \Cref{ex:s5})---this agrees with
    \Cref{tbl:s5_chartbl}.
    One can apply similar argument to $(n-1)$-dimensional
    irreducible $S_n$-representations with
    $n>5$ being a prime power.
  \end{remark}

  \noindent
  Our last example illustrates \Cref{rmk:2trans_groups}.

  \begin{example}
    Consider the solvable group $G:=SL_2(\mathbb{Z}_3)$
    acting transitively (but not $2$-transtively,
    in particular, the augmentation representation is reducible,
    see \Cref{def:aug_subm})
    on the set $X$ of eight non-zero vectors of
    $(\mathbb{Z}_3)^2$. It follows from
    \Cref{lemma:2trans_groups} that
    $2\mid\alpha(\rho_{[G;X]}^a)$.
    Therefore, $2$ divides $\alpha$-characteristic
    of every non-trivial constituent of 
    $\rho_{[G;X]}^a$, and there are three of those:
    two $2$-dimensional and one $3$-dimensional.
  \end{example}

  \section{Irreducible representations with trivial $\alpha$-characteristic}
  \label{sec:5}
  As we already know, a finite group $G$ admitting an irreducible
  complex representation $\rho$ with trivial $\alpha$-characteristic
  is non-solvable. 

  In this section, given $N\triangleleft G$ and
  an irreducible $G$-representation (resp.~irreducible $N$-representation)
  with trivial $\alpha$-characteristics,
  we study the $\alpha$-characteristics of
  its restriction to $N$ (resp.~induction to $G$).

  \subsection{Motivating Examples}
  Keeping in mind \Cref{prop:alpha_func},
  consider the following example.

  \begin{example}\label{example:rest_to_normal}
    Consider $N:=A_5\trianglelefteq S_5=:G$
    ($N$ is simple and $G\simeq\aut(N)$).
    The isotypical decomposition of
    $\resrep{\xi}{N}$ for $\xi\in\irr(G)$
    are as follows
    (see also \Cref{tbl:a5_chartbl,tbl:s5_chartbl}):
    \begin{align*}
      \resrep{(\xi_0)}{N}=\resrep{(\xi_1)}{N}&=\psi_0,\\
      \resrep{(\xi_2)}{N}=\resrep{(\xi_6)}{N}&=\psi_3,\\
      \resrep{(\xi_3)}{N}=\resrep{(\xi_5)}{N}&=\psi_4,\\
      \resrep{(\xi_4)}{N}&=\psi_1\oplus\psi_2.
    \end{align*}
    Observe that
    \begin{enumerate}
    \renewcommand{\labelenumi}{(\roman{enumi})}
    \item $\alpha(\psi_0)=1$ divides both
      $\alpha(\xi_0)=1$ and $\alpha(\xi_1)=2$;
    \item $\alpha(\psi_3)=5$ divides both
      $\alpha(\xi_2)=10$ and $\alpha(\xi_6)=5$;
    \item $\alpha(\psi_4)=1$ divides both
      $\alpha(\xi_3)=1$ and $\alpha(\xi_5)=1$;
    \item both $\alpha(\psi_1)=2$ and $\alpha(\psi_2)=2$ divide
      $\alpha(\xi_4)=2$.
    \end{enumerate}
  \end{example}

  \begin{remark}\label{rmk:5.1}
    Clearly, \Cref{example:rest_to_normal} is
    in the complete agreement with \Cref{prop:alpha_func}
    (b).  On the other hand, it also gives
    rise to the following question: under which condition,
    does $\alpha(\theta)=1$ imply $\alpha(\indrep{\theta}{G})=1$
    for $\theta\in\irr(N)$ and $N\trianglelefteq G$?
    The answer is given in the nest subsection. 
  \end{remark}


  \subsection{Induction and restriction of representations with
  trivial $\alpha$-characteristic}

  Let $N$ be a non-trivial proper subgroup of $G$.
  Pick an arbitrary $\rho\in\irr(G)$.
  By \Cref{prop:alpha_func} (a), $\alpha(\rho_N)$ divides $\alpha(\rho)$.
  Therefore, if $\alpha(\rho)=1$, then $\alpha(\rho_N) = 1$.
  Decomposing $\rho_N$ into a direct sum of $N$-irreducible
  representations $\rho_N=\sum_{i=1}^k\theta_i$,
  we obtain $\gcd(\alpha(\theta_1),...,\alpha(\theta_k))=1$
  (see \Cref{prop:alpha_func} (e)).
  If $N$ is normal in $G$, then all constituents $\theta_i$
  are $G$-conjugate by Clifford's theorem (see \cite{Isaacs1976}),
  and have the same $\alpha$-characteristic (see \Cref{prop:alpha_func} (c)).
  Hence, $\alpha(\theta_i)=1$ for each $i=1,...,k$.
  In other words, trivial $\alpha$-characteristic of an irreducible
  $G$-representation $\rho$ is inherited by all constituents of its
  restriction $\rho_N$.
  But this does not happen for the induction.
  More precisely, if $\theta\in\irrnt(N)$
  has trivial $\alpha$-characteristic,
  then some of the constituents of $\theta^G$ may
  have non-trivial $\alpha$-characteristic even in the case of
  $N$ being normal.
  For example, take $G=A_5\times\Z_7, N=A_5$ and $\theta =\rho^a_{A_5:D_5}$.
  Then $\theta^G=\sum_{i=0}^6 \theta\otimes\lambda^i$
  where $\lambda\in\irrnt(\Z_7)$.
  Clearly, $\alpha(\theta\otimes 1_{\Z_7})=\alpha(\theta)=1$.
  By \Cref{lemma:nilpotent1},
  $\alpha(\theta\otimes\lambda^i)=7$ if $i\neq 0$.
  Thus, $\theta^G$ contains only one irreducible constituent
  with trivial $\alpha$-characteristic.

  \begin{prop}\label{090517a} Let $N\trianglelefteq G$
  and $\theta\in\irr(N)$ with $\alpha(\theta)=1$.
  Then, $\alpha(\theta^G)=1$.
  \end{prop}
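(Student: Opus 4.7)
The plan is to apply the criterion from \Cref{lemma:solvable_group1}((i)$\Leftrightarrow$(iii)): although stated for $\rho\in\irrnt(G)$, the equivalence $p\mid\alpha(\rho)\iff V^P=0$ (for $P\in\syl_p(G)$) rests only on the orbit-stabilizer description of $\alpha$ and on $V^P=0\iff\iprod{\resrep{\chi}{P}}{\trep{P}}=0$, so it holds for any $G$-representation. Thus $\alpha(\theta^G)=1$ iff for every prime $p\mid|G|$, the fixed subspace $V^Q$ is non-zero for some (equivalently, every) $Q\in\syl_p(G)$, where $V$ denotes the space of $\theta^G$. The task reduces to verifying this for an arbitrary such $p$ and $Q$.

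Fix $p\mid|G|$, $Q\in\syl_p(G)$, and let $W$ be the space of $\theta$. Two applications of Frobenius reciprocity together with Mackey's decomposition yield
\[
\dim V^Q=\iprod{\theta^G}{\indrep{\trep{Q}}{G}}_G
=\iprod{\theta}{\resrep{(\indrep{\trep{Q}}{G})}{N}}_N
=\sum_{g\in N\backslash G/Q}\dim W^{N\cap gQg^{-1}}.
\]
Since $N\trianglelefteq G$, one has $N\cap gQg^{-1}=g(N\cap Q)g^{-1}$; and by a standard consequence of Sylow's theorem, $N\cap Q\in\syl_p(N)$ (the trivial subgroup if $p\nmid|N|$).

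If $p\nmid|N|$, every summand above equals $\dim W>0$, so $V^Q\neq 0$. If $p\mid|N|$, then $N\cap Q$ is a non-trivial Sylow $p$-subgroup of $N$; since $\alpha(\theta)=1$ by hypothesis, applying the same criterion to $\theta$ yields $W^P\neq 0$ for every $P\in\syl_p(N)$, and in particular the identity double coset contributes $\dim W^{N\cap Q}>0$ to the Mackey sum. In either case $V^Q\neq 0$, and the arbitrariness of $p$ gives $\alpha(\theta^G)=1$.

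The only technical step is extracting the Frobenius--Mackey identity for $\dim V^Q$ and exploiting the normality of $N$ to rewrite $N\cap gQg^{-1}$ as a $G$-conjugate of $N\cap Q$; once this is in hand, the non-negativity of the remaining summands means the $g=e$ term already forces positivity, so I do not anticipate a real obstacle.
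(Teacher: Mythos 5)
Your argument is correct, and it shares the paper's central reduction: for each prime $p$ dividing $\abs{G}$ and each $Q\in\syl_p(G)$ one shows $V^Q\neq 0$, using that $Q\cap N\in\syl_p(N)$ (this is where normality of $N$ enters) together with $W^{Q\cap N}\neq 0$, which follows from $\alpha(\theta)=1$ via \Cref{lemma:solvable_group1}. Where you genuinely diverge is in the final step. The paper argues constructively: it picks a non-zero $w\in W^{Q\cap N}$, averages it over $Q$ to get $v=\sum_{g\in Q}gw$, and uses the decomposition $V=\bigoplus_{t\in T}tW$ over a transversal $T$ of $G/N$ extending a transversal $T_1$ of $Q/(Q\cap N)$ to see that $v=\abs{Q\cap N}\sum_{t\in T_1}tw\neq 0$; hence $V^Q\neq0$. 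You instead compute $\dim V^Q$ by two applications of Frobenius reciprocity and Mackey's decomposition, obtaining a sum of non-negative terms indexed by $(N,Q)$-double cosets whose identity term is $\dim W^{N\cap Q}>0$. Both are valid; your route trades the explicit transversal bookkeeping for standard character-theoretic machinery, and as a by-product gives the exact value of $\dim V^Q$ (indeed every double coset contributes positively, since by normality each $N\cap gQg^{-1}=g(N\cap Q)g^{-1}$ is again a Sylow $p$-subgroup of $N$). Your preliminary remark that the criterion of \Cref{lemma:solvable_group1} extends to reducible representations is needed here (since $\theta^G$ need not be irreducible) and is correctly justified; the paper relies on the same extension implicitly when it passes from $V^P\neq0$ to $\alpha(\theta^G)_p=1$.
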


  \begin{proof}
  Let $W$ (resp.~$V$) be the $N$-representation
  (resp.~$G$-respresentation)
  corresponding to $\theta$ (resp.~$\theta^G$).
  It suffices to show that $\alpha(\theta^G)_p=1$
  for each prime divisor $p$ of $|G|$.

  Pick a Sylow $p$-subgroup $P\leq G$.
  Then, $P\cap N$ is a Sylow $p$-subgroup of $N$.
  It follows from $\alpha(\theta)=1$ that the subspace
  $W_1:=W^{P\cap N}$ is non-trivial (see \Cref{lemma:solvable_group1}).
  Pick an arbitarary non-zero $w\in W_1$.
  Then, the vector $v:=\sum_{g\in P} gw$ is fixed by any element of $P$,
  that is, $Pv=v$. We claim that $v\neq 0$.
  Let $T_1$ be a transversal of $P/(P\cap N)$.
  By isomorphism $P/(P\cap N)\cong PN/N$,
  the set $T_1$ is a transversal of $PN/N$.
  Now we complete $T_1$ to a transversal $T$ of $G/N$
  and set $V=\oplus_{t\in T} tW$.
  
  Now $P = T_1(P\cap N)$ implies
  $v=|P\cap N|\sum_{t\in T_1} t w\neq 0$.
  Thus, $V^P$ is non-trivial, and, consequently, $\alpha(\theta^G)_p=1$.
  \end{proof}

  In general, it is not clear whether $\alpha(\theta)=1$
  implies that $\theta^G$ contains an irreducible constituent
  with trivial $\alpha$-characteristic.
  \Cref{prop:normal} below provides sufficient conditions for that.
  Its proof is based on the following lemma (see \cite{Isaacs1976}).

  \begin{lemma}\label{lemma:clifford}
    Suppose $N\trianglelefteq G$. Let
    $\chi$ and $\omega$ be irreducible
    characters of $G$ and $N$, respectively,
    such that $\iprod{\resrep{\chi}{N}}{\omega}>0$.
    If $p=\subgind{G}{N}$ is a prime,
    then for the decompositions of
    $\resrep{\chi}{N}$ and $\indrep{\omega}{G}$,
    one of the following two statements takes place:

    \begin{enumerate}
    \renewcommand\labelenumi{(\alph{enumi})}
    \item $\chi_N=\ds\sum_{i=0}^{p-1}\gconj{\omega}{g_i}$
      and $\indrep{\omega}{G}=\chi$.
    \item $\chi_N=\omega$ and
      $\indrep{\omega}{G}=\ds\sum_{i=0}^{p-1}{\chi\phi_i}$,
      where $\set{\phi_i}_{i=0}^{p-1}$ is the set of
      all irreducible characters of $G/N\simeq\bz_p$,
      which can be viewed as irreducible characters of $G$ as well.
    \end{enumerate}
  \end{lemma}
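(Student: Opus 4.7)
The plan is to invoke Clifford's theorem together with the inertia subgroup analysis, exploiting the fact that $\subgind{G}{N}=p$ is prime. Let $T:=\set{g\in G:\gconj{\omega}{g}=\omega}$ be the inertia subgroup of $\omega$ in $G$. Since $N\trianglelefteq G$ and $N\leq T\leq G$, the index $\subgind{G}{T}$ divides $\subgind{G}{N}=p$; as $p$ is prime, either $T=N$ or $T=G$. I would treat these two cases separately and show they correspond to cases (a) and (b), respectively.

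In the case $T=N$, Clifford's theorem asserts that $\indrep{\omega}{G}$ is irreducible. Since $\iprod{\resrep{\chi}{N}}{\omega}>0$ implies by Frobenius reciprocity that $\iprod{\chi}{\indrep{\omega}{G}}>0$, and both $\chi$ and $\indrep{\omega}{G}$ are irreducible, we must have $\indrep{\omega}{G}=\chi$. Comparing degrees gives $\chi(1)=p\,\omega(1)$, so the Clifford decomposition of $\resrep{\chi}{N}$ contains each $G$-conjugate of $\omega$ with multiplicity $1$; picking coset representatives $\set{g_i}_{i=0}^{p-1}$ of $G/N$ yields $\resrep{\chi}{N}=\sum_{i=0}^{p-1}\gconj{\omega}{g_i}$. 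This is exactly case (a).

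In the case $T=G$, the character $\omega$ is $G$-invariant. Since $G/N\cong\bz_p$ is cyclic, a classical extendibility result guarantees that $\omega$ admits an extension $\tilde{\omega}\in\irr(G)$ with $\resrep{\tilde\omega}{N}=\omega$. Gallagher's theorem then identifies the irreducible characters of $G$ lying over $\omega$ as precisely $\set{\tilde\omega\phi_i}$, where $\set{\phi_i}_{i=0}^{p-1}$ are the linear characters of $G/N\cong\bz_p$ (pulled back to $G$). Because $\iprod{\resrep{\chi}{N}}{\omega}>0$, $\chi$ must be one of these extensions, say $\chi=\tilde\omega\phi_j$. Restricting gives $\resrep{\chi}{N}=\omega$, and rewriting the full decomposition of $\indrep{\omega}{G}=\sum_i\tilde\omega\phi_i$ in terms of $\chi$ (by relabeling $\phi_i\mapsto\phi_i\phi_j^{-1}$) yields $\indrep{\omega}{G}=\sum_{i=0}^{p-1}\chi\phi_i$, which is case (b).

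The main subtlety, and the part I expect to require the most care, is the extendibility of $\omega$ in the second case. This is not automatic for arbitrary normal subgroups, but follows here because $G/N$ is cyclic: one explicitly constructs $\tilde\omega$ by fixing a representation of $N$ affording $\omega$, choosing a preimage in $\gl(V)$ of a generator of $G/N$ whose $p$-th power matches the required value, and extending linearly. Once extendibility is established, Gallagher's theorem is standard and the rest is a bookkeeping of constituents and multiplicities.
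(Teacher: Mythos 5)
Your proof is correct: the paper does not actually prove this lemma (it is quoted from Isaacs' book), and your argument via the inertia group dichotomy $T=N$ or $T=G$, the Clifford correspondence in the first case, and extendibility over a cyclic quotient plus Gallagher's theorem in the second case is exactly the standard derivation underlying the cited result. The only detail worth making explicit in case (a) is that $T=N$ forces the $p$ conjugates $\gconj{\omega}{g_i}$ to be pairwise distinct, so the degree count $\chi(1)=p\,\omega(1)$ indeed pins the common multiplicity at $1$ — which you essentially do.
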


  \begin{prop}\label{prop:normal}
    Let $N\trianglelefteq G$ and $\omega\in\irr(N)$ with  $\alpha(\omega)=1$.
    If $G/N$ is solvable then, $\indrep{\omega}{G}$ admits
    an irreducible component $\rho$
    satisfying $\alpha(\rho)=1$.
  \end{prop}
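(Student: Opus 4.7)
The plan is to construct, step by step along a subnormal chain from $N$ to $G$, an irreducible constituent of $\omega^G$ with trivial $\alpha$-characteristic. Since $G/N$ is solvable, there exists a chain $N = N_0 \trianglelefteq N_1 \trianglelefteq \cdots \trianglelefteq N_\ell = G$ with $[N_{i+1}:N_i] = p_i$ prime for every $i$. By transitivity of induction, it suffices to build inductively irreducible characters $\omega_i \in \irr(N_i)$ with $\omega_0 = \omega$, each $\omega_{i+1}$ an irreducible constituent of $\omega_i^{N_{i+1}}$, and $\alpha(\omega_i) = 1$ throughout; then $\omega_\ell \in \irr(G)$ is the desired constituent of $\omega^G$.

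At the $i$-th step I apply Clifford's theorem (\Cref{lemma:clifford}) to $N_i \trianglelefteq N_{i+1}$: for any irreducible constituent $\xi$ of $\omega_i^{N_{i+1}}$, either (a) $\omega_i^{N_{i+1}} = \xi$ is irreducible, in which case \Cref{090517a} immediately yields $\alpha(\xi) = 1$, so I take $\omega_{i+1} = \xi$; or (b) $\xi_{N_i} = \omega_i$ and $\omega_i^{N_{i+1}} = \sum_{j=0}^{p_i - 1} \xi \phi_j$ with $\phi_j \in \irr(N_{i+1}/N_i)$.

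Case (b) requires the main computation: I claim some $\xi \phi_j$ has $\alpha(\xi \phi_j) = 1$. For any prime $q \neq p_i$, every Sylow $q$-subgroup $Q$ of $N_{i+1}$ is contained in $N_i$, since $[Q : Q \cap N_i]$ is simultaneously a divisor of $[N_{i+1}:N_i] = p_i$ and a power of $q$, hence equals $1$; such $Q$ is therefore a Sylow $q$-subgroup of $N_i$. Because $\phi_j|_{N_i} = 1$ and $\xi_{N_i} = \omega_i$, one has $(\xi \phi_j)_Q = (\omega_i)_Q$, and \Cref{lemma:solvable_group1} combined with $\alpha(\omega_i) = 1$ gives $\iprod{(\omega_i)_Q}{1_Q} > 0$, so $q \nmid \alpha(\xi \phi_j)$ for every $j$. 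For $q = p_i$, let $P$ be a Sylow $p_i$-subgroup of $N_{i+1}$; then $N_{i+1} = N_i P$ and $N_i \cap P$ is a Sylow $p_i$-subgroup of $N_i$, so Frobenius reciprocity and Mackey's single-double-coset formula give
\[
\iprod{(\omega_i^{N_{i+1}})_P}{1_P} = \iprod{(\omega_i)_{N_i \cap P}}{1_{N_i \cap P}} > 0,
\]
again by \Cref{lemma:solvable_group1}. Hence the module of $\omega_i^{N_{i+1}}$ has a non-zero $P$-fixed subspace, forcing $V_j^P \neq 0$ for some summand corresponding to $\xi \phi_j$; for that $j$, $p_i \nmid \alpha(\xi \phi_j)$. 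Combining both analyses, $\alpha(\xi \phi_j) = 1$ for this specific $j$, and I set $\omega_{i+1} = \xi \phi_j$.

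The main obstacle is this case (b) argument, specifically isolating a \emph{single} constituent $\xi \phi_j$ with $\alpha = 1$ rather than merely showing $\gcd_j \alpha(\xi \phi_j) = 1$. The key is to separate the prime $p_i = [N_{i+1}:N_i]$ from the other primes: for $q \neq p_i$, a uniform Sylow-containment argument $Q \leq N_i$ gives a fixed vector for every constituent, while for $q = p_i$ the Mackey-Frobenius identity on the full induced module produces a $P$-fixed vector that must lie in at least one summand.
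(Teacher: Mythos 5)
Your proof is correct and follows essentially the same route as the paper: both reduce along a prime-index (sub)normal chain afforded by solvability of $G/N$, invoke the Clifford dichotomy of \Cref{lemma:clifford} at each step (with \Cref{090517a} handling the irreducible case), and in the split case isolate a single constituent $\xi\phi_j$ by treating the index prime $p_i$ separately from the remaining primes, for which every Sylow subgroup already lies in $N_i$. The only cosmetic difference is that you obtain the key positivity $\iprod{(\omega_i^{N_{i+1}})_P}{\trep{P}}>0$ via Mackey--Frobenius, whereas the paper evaluates $\indrep{\omega}{G}(\underline{R})=p\,\omega(\underline{R\cap N})$ directly.
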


  \begin{proof}
    We use induction over $|G/N|$.
    Pick a maximal normal subgroup $M$ of $G$
    which contains $N$. Then, $M/N$ is a maximal normal subgroup of $G/N$,
    and, by solvability of $G/N$, $M$ is of prime index, say $p$, in $G$.
    If $M\neq N$, then, by induction hypothesis,
    the representation $\omega^M$ has an irreducible component,
    say $\sigma$, with $\alpha(\sigma)=1$. 
    Applying induction hypothesis to the pair $M\trianglelefteq G$
    and $\sigma$ we conclude that $\sigma^G$ contains an irreducible
    component $\rho\in\irr(G)$ with $\alpha(\rho)=1$.
    Now it follows from $\omega^G = (\omega^M)^G$ that
    $\rho$ is a constituent of $\omega^G$.

    Assume now that $M=N$, that is $[G:N]=p$ is prime.
    By Lemma~\ref{prop:normal} either $\omega^G$ is irreducible or 
    $\omega^G = \sum_{i=0}^{p-1} \chi \xi_i$
    (hereafter, $\omega,\chi,\zeta_j$ stand for
    characters rather than for representations).
    In the first case we are done by \Cref{090517a}.
    Consider now the second case:

    $\indrep{\omega}{G}=\ds\sum_{i=0}^{p-1}{\chi\xi_i}$.
    In this case, in suffices to show that
    there exists $0\leq j\leq p-1$ such that
    $\chi\xi_j(\underline{R})>0$
    for any prime factor $r$ of $\abs{G}$ and
    $R\in\syl_r(G)$. Indeed,
    note that $\ds\sum_{i=0}^{p-1}\xi_i(g)=p$
    if $g\in N$ and $0$ otherwise. Therefore,
      \begin{align*}
        \indrep{\omega}{G}(\underline{R})=
        \sum_{i=0}^{p-1}\chi\xi_i(\underline{R})=\;&
        \sum_{g\in R}\sum_{i=0}^{p-1}\chi(g)\xi_i(g)=
        \sum_{g\in R}\chi(g)\left(\sum_{i=0}^{p-1}\xi_i(g)\right)\\
        =\;&
        p\sum_{g\in R\cap N}\chi(g)=
        p\sum_{g\in R\cap N}\omega(g)=
        p\,\omega(\underline{R\cap N}).
      \end{align*}
      Since $R\cap N$ is either trivial
      or a Sylow $r$-subgroup of $N$,
      one always has
      \begin{align*}
        \sum_{i=0}^{p-1}\chi\xi_i(\underline{R})=
        p\,\omega(\underline{R\cap N})>0.
      \end{align*}
      If $r=p$, then $\chi\xi_j(\underline{R})>0$
      for some $0\leq j\leq p-1$.
      If $r\neq p$, then $R\leq N$ and it follows that
      $\chi\xi_j(\underline{R})=\chi(\underline{R})>0$
      for the same $j$ as well. The result follows.
  \end{proof}

  \noindent
  The following example illustrates \Cref{prop:normal}.
  
  \begin{example}
    Consider $N:=\mathrm{PSL}(2,8)\trianglelefteq
    \mathrm{P\Gamma L}(2,8)=:G$
    ($N$ is simple, $G\simeq\aut(N)$ and $\subgind{G}{N}=3$
    is a prime).
    The relation between $\irr(G)$ and $\irr(N)$
    with respect to restriction and induction
    are described as follows
    (see also \Cref{tbl:psl28_chartbl,tbl:pgammal28_chartbl}):

    \noindent
    \begin{minipage}[c][2.75cm][c]{.8\textwidth}
    \centering
    \begin{tabularx}{.8\textwidth}{@{}R@{\,}LCR@{\,}L@{}}
      \resrep{\xi_0}{N}=\resrep{\xi_1}{N}
          =\resrep{\xi_2}{N}&=\psi_0,& &
          \indrep{\psi_0}{G}&=\xi_0\oplus\xi_1\oplus\xi_2,\\
      \resrep{\xi_3}{N}=\resrep{\xi_4}{N}
          =\resrep{\xi_5}{N}&=\psi_1,& &
          \indrep{\psi_1}{G}&=\xi_3\oplus\xi_4\oplus\xi_5,\\
      \resrep{\xi_6}{N}=\resrep{\xi_7}{N}
          =\resrep{\xi_8}{N}&=\psi_5,&and&
          \indrep{\psi_5}{G}&=\xi_6\oplus\xi_7\oplus\xi_8,\\
      \resrep{\xi_9}{N}&=\psi_2\oplus\psi_3\oplus\psi_4,& &
          \indrep{\psi_2}{G}=\indrep{\psi_3}{G}=\indrep{\psi_4}{G}
          &=\xi_9,\\
      \resrep{\xi_{10}}{N}&=\psi_6\oplus\psi_7\oplus\psi_8.& &
          \indrep{\psi_6}{G}=\indrep{\psi_7}{G}=\indrep{\psi_8}{G}
          &=\xi_{10}.
    \end{tabularx}
    \end{minipage}

    \noindent
    One can observe that
    $\alpha(\psi_6)=\alpha(\psi_7)
    =\alpha(\psi_8)=\alpha(\xi_{10})=1$,
    which agrees with \Cref{prop:normal}.
  \end{example}

  \subsection{Groups with totally trivial $\alpha$ characteristic}

  This section is devoted to the groups which have no irreducile
  representation with non-trivial $\alpha$-characteristic.
  In what follows we denote this class of groups as $\mathfrak{T}$.
  We know that this class is non-empty, since $J_1\in\mathfrak{T}$.
  We also know that all groups in $\mathfrak{T}$ are non-solvable.
  Below we collect elementary properties of $\mathfrak{T}$. 

  \begin{prop}\label{100517a} Let $G\in\mathfrak{T}$. Then
  \begin{enumerate}
  \item[(a)] If $N$ is a normal subgroup of $G$,
    then $N,G/N\in \mathfrak{T}$;
  \item[(b)] All composition factors of $G$ belong to $\mathfrak{T}$;
  \item[(c)] If $H\in\mathfrak{T}$, then $G\times H\in\mathfrak{T}$
  \end{enumerate}
  \end{prop}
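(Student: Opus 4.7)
For part (a), I would handle the quotient and the normal subgroup separately, in each case arguing by contrapositive. For $G/N$, the point is that if $\bar\rho\in\irr(G/N)$ is lifted (inflated) to an irreducible $G$-representation $\rho$, then $N$ acts trivially on the underlying space $V$, so $N\subseteq G_x$ for every $x\in S(V)$, hence $G_x$ is the full preimage of $(G/N)_x$ under $\pi:G\to G/N$. Consequently $|G(x)|=|(G/N)(x)|$ for all $x\in S(V)$, so $\alpha(\rho)=\alpha(\bar\rho)$. Since $G\in\mathfrak T$ forces $\alpha(\rho)=1$, we get $\alpha(\bar\rho)=1$. For the subgroup case, take any $\theta\in\irr(N)$ and decompose $\theta^G=\bigoplus_i\rho_i$ into irreducibles over $G$. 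By \Cref{prop:alpha_func}(b), $\alpha(\theta)\mid\alpha(\theta^G)$, and by \Cref{prop:alpha_func}(e), $\alpha(\theta^G)=\gcd_i\alpha(\rho_i)$. Since $G\in\mathfrak T$, each $\alpha(\rho_i)=1$, so $\alpha(\theta^G)=1$ and therefore $\alpha(\theta)=1$.

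Part (b) follows from part (a) by iteration along any composition series $G=G_0\triangleright G_1\triangleright\cdots\triangleright G_k=\trp$. Inductively, if $G_i\in\mathfrak T$, then $G_{i+1}\in\mathfrak T$ (normal subgroup case of (a)), and $G_i/G_{i+1}\in\mathfrak T$ (quotient case of (a)); the latter is precisely the $i$-th composition factor.

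For part (c), I would use the standard classification of the irreducibles of a direct product: every $\rho\in\irr(G\times H)$ has the form $\rho=\rho_G\boxtimes\rho_H$ with $\rho_G\in\irr(G)$ and $\rho_H\in\irr(H)$ (see \cite{Serre}). Since $G,H\in\mathfrak T$, both $\alpha(\rho_G)=1$ and $\alpha(\rho_H)=1$. Applying \Cref{lemma:nilpotent1} to the external tensor product, we obtain
\[
\alpha(\rho)=\alpha(\rho_G\otimes\rho_H)\;\Big|\;\alpha(\rho_G)\,\alpha(\rho_H)=1,
\]
so $\alpha(\rho)=1$. As $\rho$ was arbitrary, $G\times H\in\mathfrak T$.

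The proposition is really a bookkeeping exercise once \Cref{prop:alpha_func} and \Cref{lemma:nilpotent1} are in hand; there is no single hard step. The only point that requires a moment of care is the subgroup half of (a): one must remember that a \emph{gcd} of the constituent $\alpha$-characteristics equals $\alpha(\theta^G)$, so that all $\alpha(\rho_i)=1$ indeed forces $\alpha(\theta^G)=1$ rather than merely some prime-power divisor statement.
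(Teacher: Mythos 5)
Your proposal is correct and follows essentially the same route as the paper: inflation for the quotient case of (a), \Cref{prop:alpha_func}(b) together with \Cref{prop:alpha_func}(e) for the normal-subgroup case, iteration along a composition series for (b), and the decomposition $\irr(G\times H)=\set{\psi\otimes\phi}$ combined with \Cref{lemma:nilpotent1} for (c). You merely spell out the details that the paper leaves implicit.
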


  \begin{proof}
  Part (a). The inclusion $G/N$ follows from the fact that every
  irreducile representation of $G/N$ may be considered as an irreducile
  representation of $G$.
  The inclusion $N\in\mathfrak{T}$ follows from
  \Cref{prop:alpha_func} (b).

  Part (b) is a direct consequence of (a).

  Part (c) is a direct consequence of Lemma~\ref{lemma:nilpotent1},
  since each irreducible representation of $G\times H$ is a tensor product
  $\psi\otimes\phi$ where $\psi\in\irr(G),\phi\in\irr(H)$
  \end{proof}

  Our computations in GAP suggest the following conjecture.
  \medskip

  \noindent
  {\bf Conjecture.}
  If $G\in\mathfrak{T}$ is a simple group,
  then it is one of the sporadic groups.

  \section{Existence of Quadratic Equivariant Maps}\label{sec:6}
  
  \subsection{General Construction}
  In general, the problem of existence of
  equivariant maps between $G$-manifolds
  is rather complicated.
  We will study \Cref{prob:B} in the following
  setting: $V$ is a {\em faithful}
  {\em irreducible} $G$-representation
  and $W$ is another $G$-representation
  of the same dimension.
  It is well-known that
  if $V$ is faithful, then
  there exists a positive integer $k$ such
  that the symmetric tensor power
  $\sym^k(V)$ contains $W$
  (see, for example, \cite{Isaacs1976}). Thus, assume
  $W\subset\sym^k(V)$ and let
  \begin{align*}
    \begin{cases}
      \triangle:V\to V^{\otimes k},\\
      \triangle(v)=\underbrace{v\otimes
          v\otimes\dots\otimes v}_{k\mbox{ times}}.
    \end{cases}
  \end{align*}
  be the corresponding {\em diagonal} map.
  Observe that
  $\triangle$ is $G$-equivariant and
  $\triangle(V)$ spans $\sym(V)$.
  Let $A:\sym^k(V)\to W$ be
  a $G$-equivariant linear operator
  (e.g.~orthogonal projection).
  Then, $\phi=A\circ\triangle$
  is a $k$-homogeneous
  $G$-equivariant map from
  $V$ to $W$, which admits
  the following criterion
  of admissibility (see, for example, \cite{Kushkuley-Balanov}).
 
  \begin{prop}\label{prop:admissible}
    $\phi$ is admissible if and only if
    $\ker A\cap\triangle(V)=\set{\mathbf 0}$.
  \end{prop}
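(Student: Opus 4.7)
The plan is to unwind the definitions and reduce everything to the elementary observation that the diagonal map $\triangle$ satisfies $\triangle(v)=0$ if and only if $v=0$. Since $\triangle(v)=v^{\otimes k}$, this is just the standard fact that a pure tensor $v\otimes\dots\otimes v$ vanishes iff one (equivalently, every) factor vanishes; I would state this at the outset as a preparatory observation.

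With that in hand, I would argue both implications directly. For the forward direction, assume $\phi=A\circ\triangle$ is admissible and pick any $x\in\ker A\cap\triangle(V)$. Writing $x=\triangle(v)$ gives $\phi(v)=A(\triangle(v))=A(x)=0$, so admissibility forces $v=0$, whence $x=\triangle(0)=0$. For the reverse direction, assume $\ker A\cap\triangle(V)=\{\mathbf 0\}$ and suppose $\phi(v)=0$. Then $\triangle(v)\in\ker A\cap\triangle(V)=\{\mathbf 0\}$, so $\triangle(v)=0$, and by the preparatory observation $v=0$.

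This is essentially a definition-chase, so I do not anticipate any real obstacle; the only subtlety worth flagging is that $\triangle(V)$ is not a linear subspace (it is a cone), so the intersection $\ker A\cap\triangle(V)$ must be understood set-theoretically, and the containment $\{\mathbf 0\}\subseteq \ker A\cap\triangle(V)$ is automatic because $\triangle(0)=0\in\ker A$. I would include a half-line remark to this effect so the equality (rather than inclusion) in the statement is unambiguous, and I would also note in passing that the $G$-equivariance of $A$ and $\triangle$ plays no role in this particular equivalence; it is used only to guarantee that $\phi$ is $G$-equivariant to begin with, which is the content of the sentence preceding the proposition.
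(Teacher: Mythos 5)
Your proof is correct, and it is the straightforward definition-chase that the statement calls for; the paper itself gives no proof, merely citing \cite{Kushkuley-Balanov}, so there is nothing to diverge from. The one substantive ingredient --- that $\triangle(v)=v^{\otimes k}$ vanishes only when $v=0$ --- is exactly the observation you isolate, and your remarks about $\triangle(V)$ being a cone rather than a subspace and about equivariance playing no role here are accurate.
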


  \noindent
  In practice, given characters of $V$ and $W$,
  constructing an admissible
  homogeneous equivarinat map $\phi:V\to W$
  involves the following steps:

  \begin{enumerate}
  \renewcommand\labelenumi{(S\arabic{enumi})}
  \item\label[step]{step:A}
    Finding $k$ satisfying
    $W\subset\sym^k(V)$.

  \item\label[step]{step:B}
    Computing matrices representing
    the $G$-actions on $V$ and $\sym^k(V)$,
    and finding isotypical basis of $W\subset\sym^k(V)$.

  \item\label[step]{step:C}
    Verification of the admissibility
    of $\phi=A\circ\triangle$ by the
    Weak Nullstellensatz (see \cite{Cox-Little-Oshea}):
  \end{enumerate}

  \begin{prop}\label{prop:null}
    Let $Q:=\set{q_i}_{i=1}^m\subset R:=\bc[x_1,\dots,x_N]$
    be a collection of polynomials and
    $I:=\set{\sum_{i=1}^m{r_iq_i}:r_i\in R}$
    the ideal generated by $Q$.
    Then, the following statements are equivalent:

    \begin{itemize}
    \item $Q$ admits no common zeros.

    \item The Gr\"obner basis of $I$ contains
      the constant polynomial $1$.
    \end{itemize}
  \end{prop}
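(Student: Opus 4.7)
The plan is to derive this equivalence by combining two classical ingredients: Hilbert's Weak Nullstellensatz on the one hand, and a standard characterization of the unit ideal in terms of Gr\"obner bases on the other. Both ingredients can be invoked as black boxes; the proposition itself is essentially a rephrasing of their conjunction for the algorithmic verification needed in \cref{step:C}.

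First, I would reduce ``$Q$ has no common zero'' to the ideal-theoretic statement $1\in I$. Since $\bc$ is algebraically closed, the Weak Nullstellensatz applies: the variety $V(I)\subset\bc^N$ is empty if and only if $I=R$, that is, if and only if the constant polynomial $1$ belongs to $I$. Because $V(I)=V(Q)$ (common zeros of a generating set coincide with zeros of the ideal), this gives the equivalence between the first bullet and the condition $1\in I$.

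Second, I would translate $1\in I$ into the Gr\"obner basis condition. Fix any monomial order and let $G=\{g_1,\dots,g_s\}$ be a Gr\"obner basis of $I$. If $1\in I$, then the leading monomial of $1$, which is $1$ itself, lies in the initial ideal $\langle\mathrm{LT}(I)\rangle=\langle\mathrm{LT}(g_1),\dots,\mathrm{LT}(g_s)\rangle$; hence some $\mathrm{LT}(g_i)$ divides $1$, forcing $\mathrm{LT}(g_i)$ to be a nonzero constant and therefore $g_i$ itself to be a nonzero constant. After the usual normalization (dividing by its leading coefficient, or passing to the reduced Gr\"obner basis) this element becomes $1$, so $1\in G$. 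Conversely, if $1\in G$, then since $G\subset I$ one has $1\in I$ immediately.

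Chaining these two equivalences yields the proposition. There is no serious obstacle here; the only small subtlety is the cosmetic one in the preceding paragraph, namely that a raw Gr\"obner basis may contain a nonzero constant rather than exactly the scalar $1$, which is harmless because Gr\"obner bases are defined only up to multiplication by nonzero scalars (equivalently, one may always work with the reduced Gr\"obner basis, which then equals $\{1\}$ precisely when $I=R$). This is exactly the algorithmic criterion that makes \cref{step:C} effective: run a Gr\"obner basis computation on the polynomials cutting out $\ker A\cap\triangle(V)$ and check whether $1$ appears.
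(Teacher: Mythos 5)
Your argument is correct and is exactly the standard one: the paper itself gives no proof of this proposition, citing it as the Weak Nullstellensatz from Cox--Little--O'Shea, and your two-step reduction (Weak Nullstellensatz over the algebraically closed field $\bc$ to get ``no common zeros iff $1\in I$'', then the leading-term divisibility argument to get ``$1\in I$ iff some Gr\"obner basis element is a nonzero constant'') is precisely the proof found in that reference. Your remark about normalizing to the reduced Gr\"obner basis is the right way to handle the only cosmetic subtlety in the statement.
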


  \begin{remark}\
    \begin{enumerate}
    \renewcommand{\labelenumi}{(\roman{enumi})}
    \item \Cref{step:A,step:B}
      are related to the classical
      {\em Clebsch-Gordan problem} of an isotypical
      decompostion of tensor product of representations
      (see, for example, \cite{Cornwell}).

    \item For \Cref{step:C},
      one can use Mathematica
      (see \cite{mathematica})
      to compute the Gr\"obner basis.

    \item If $\phi$ is admissible,
      then $\deg(\phi)$ is
      well-defined and equal to $k^n$,
      where $n=\dim(V)$.
    \end{enumerate}
  \end{remark}

  To illustrate \Cref{prop:admissible} and
  also give a brief idea about
  \Crefrange{step:A}{step:C},
  consider the following example (see also
  \cite{Balanov-Krawcewicz-Kushkuley}
  and \cite{AED}); the detail about
  these steps will be provided later
  (see \Cref{subsection:s5example}).
  In what follows,
  denote by $(V,G)$ a faithful $G$-representation
  and by $(V,G/H)$ a non-faithful
  $G$-representation with
  kernel $H\trianglelefteq G$.

  \begin{example}\label{example:q8}
    Consider the symmetric tensor square
    $(\sym_\bc^2(\bh),\qb/\bz_2)$ of the complex representation
    $(\bh,\qb)$.
    One can easily check that
    \begin{align}
      e_1=\frac{1\otimes 1+j\otimes j}{2},\quad
      e_2=\frac{1\otimes 1-j\otimes j}{2},\quad
      e_3=\frac{j\otimes 1+j\otimes 1}{2}
    \end{align}
    form an isotypical basis of $\sym_\bc^2(\bh)$ and
    \begin{align}
      \triangle(z_1+jz_2)=e_1({z_1}^2+{z_2}^2)+
      e_2({z_1}^2-{z_2}^2)+e_3(2z_1z_2).
    \end{align}
    Let $P_1$, $P_2$ and $P_3$ denote the
    natural $\qb$-equivariant projections onto the subspaces
    of $\sym_\bc^2(\bh)$ spanned by $\set{e_1,e_2}$
    $\set{e_2,e_3}$ and $\set{e_1,e_3}$, respectively.
    A direct computation shows that
    $\ker P_i\cap\triangle(\bh)=\set{0}$
    for $i=1,2,3$. Consequently, $f_i=P_i\circ\triangle$,
    $i=1,2,3$, are admissible $\qb$-equivariant maps.
  \end{example}

  \begin{remark}\label{rmk:congruence}
    \Cref{example:q8} shows the existence
    of an admissible 2-homogeneous $\qb$-equivariant
    map $f_1:(\bh,\qb)\to(\bc^2,\qb/\bz_2)$,
    where $\bc^2\subset\sym_\bc^2(\bh)$
    is the subrepresentation spanned by
    $\set{e_1,e_2}$.
    In addition, $\alpha(\qb,\bh)=8$ since
    $\qb$ acts freely on $S(\bh)$.
    Therefore, it follows from
    the congruence principle that for any
    admissible $\qb$-equivariant map $f:(\bh,\qb)\to(\bc^2,\bh/\bz_2)$,
    \begin{align*}
      \deg(f)\equiv\deg(f_1)=2^2=4\pmod{8}.
    \end{align*}
    In particular, $\deg(f)$ is different from 0.
  \end{remark}

  In addition to the congruence principle,
  one can also analyze \Cref{example:q8}
  by the following result (see \cite{Atiyah-Tall}).

  \begin{thm}[Atiyah-Tall]\label{thm:atiyah-tall}
    Let $G$ be a finite $p$-group
    and $V$ and $W$ two $G$-representations.
    There exists an admissible equivariant
    map $f:V\to W$ with
    $\deg(f)\not\equiv0\pmod{p}$
    if and only if the irreducible
    components of $V$ and $W$ are
    Galois conjugate in pairs.
  \end{thm}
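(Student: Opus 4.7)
My plan is to split the statement into sufficiency and necessity and, as a preliminary reduction, to observe that by \Cref{prop:alpha_func}(e) and the multiplicativity of the Brouwer degree under joins of spheres, it suffices to treat the case where $V$ and $W$ are isotypical. Write $V \cong V_0^{\oplus a}$ and $W \cong W_0^{\oplus b}$ for irreducibles $V_0, W_0$; the pairing condition then reduces to requiring $W_0 \cong V_0^{\sigma}$ for a single Galois element $\sigma$.

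For sufficiency, assume $W_0 \cong V_0^{\sigma_k}$ for some $\sigma_k \in \mathrm{Gal}(\mathbb{Q}(\zeta_{\abs{G}})/\mathbb{Q})$ sending $\zeta \mapsto \zeta^k$ with $\gcd(k, \abs{G}) = 1$. I would apply the construction of \Cref{prop:admissible}: take the $k$-th diagonal $\triangle_k: V_0 \to \sym^k(V_0)$, $v \mapsto v^{\otimes k}$, which is $G$-equivariant, and post-compose with a $G$-equivariant projection $A: \sym^k(V_0) \to V_0^{\sigma_k}$. This projection exists because the character identity $\chi_{V_0}(g^k) = \chi_{V_0}^{\sigma_k}(g)$ implies $V_0^{\sigma_k}$ appears in the decomposition of $\sym^k(V_0)$. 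Admissibility $\ker A \cap \triangle_k(V_0) = \set{\mathbf{0}}$ follows from $\triangle_k(v) \neq 0$ for $v \neq 0$ together with a Schur-type argument that $A$ does not annihilate the orbit $G \cdot v^{\otimes k}$. The resulting degree is $k^{\dim V_0}$, which is coprime to $p$ since $\gcd(k, \abs{G}) = 1$ and $G$ is a $p$-group; extending to direct sums handles the general matching condition.

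For necessity, assume $f: V \to W$ is admissible with $\deg(f) \not\equiv 0 \pmod p$. The main tool is restriction to cyclic subgroups $C \leq G$ of order $p$: by P.~A.~Smith theory for $\bz/p$-actions on spheres, $f$ restricts to an admissible map $f^C: V^C \to W^C$, the dimensions of $V^C$ and $W^C$ must agree, and the degrees satisfy $\deg(f) \equiv \deg(f^C) \pmod p$ up to contributions from the normal representations of $C$ in $V$ and $W$. Forcing $\deg(f) \not\equiv 0 \pmod p$ yields a character identity of the form $\chi_W|_C = (\chi_V|_C)^{\sigma_k}$ for some $k$ coprime to $p$. Iterating through the subgroup lattice by induction on the order of cyclic subgroups, and using that every cyclic subgroup of a $p$-group has $p$-power order (so $k$-th powers with $\gcd(k,p) = 1$ generate the full Galois action on the relevant roots of unity), promotes this to the claimed global Galois conjugacy of the irreducible constituents.

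The main obstacle will be the necessity direction: one must recover the global Galois conjugacy of irreducible components from local character identities at individual cyclic $p$-subgroups while simultaneously tracking the Euler-class contributions of normal representations at each fixed-point stratum. Atiyah and Tall bypass this through equivariant $K$-theory, identifying the congruence condition with the vanishing of a class in $J(BG)$ and invoking the computation of Adams operations; the direct topological route requires careful Smith-theoretic bookkeeping along an inductive tower of $p$-subgroups, which is the technical heart of the argument.
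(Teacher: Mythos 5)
First, a point of comparison: the paper does not prove \Cref{thm:atiyah-tall} at all — it is quoted from \cite{Atiyah-Tall} as a known external result and used only to contrast with the congruence principle in the discussion surrounding \Cref{example:q8}. So there is no in-paper proof to measure your attempt against; what follows assesses your proposal on its own terms.

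Both halves of your proposal have genuine gaps. In the sufficiency direction, the claim that $V_0^{\sigma_k}$ occurs in $\sym^k(V_0)$ "because $\chi_{V_0}(g^k)=\chi_{V_0}^{\sigma_k}(g)$" conflates the Adams operation $\psi^k$ (whose character is $\chi(g^k)$ and which is only a \emph{virtual} combination of exterior and symmetric powers) with the honest symmetric power, whose character is a Newton-identity mixture of $\chi(g)^k,\chi(g^2)\chi(g)^{k-2},\dots$; the containment does not follow from that identity. More seriously, admissibility of $A\circ\triangle_k$ means $A(v^{\otimes k})\neq 0$ for \emph{every} nonzero $v$, a condition on an entire affine variety that no ``Schur-type argument'' about a single orbit can deliver — this is precisely why the paper must invoke the Weak Nullstellensatz in \Cref{step:C} even for a single $4$-dimensional $S_5$-example. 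The clean route for $p$-groups is monomiality: every $V_0\in\irr(G)$ has the form $\mathrm{Ind}_H^G(\mathbb{C}_\lambda)$ for a linear character $\lambda$ of some $H<G$; the map $z\mapsto z^k$ is an admissible equivariant map $\mathbb{C}_\lambda\to\mathbb{C}_{\lambda^k}$, and its equivariant extension (exactly the mechanism of \Cref{fig:diagram}) yields an admissible map $V_0\to V_0^{\sigma_k}$ of degree $k^{\dim V_0}$, coprime to $p$. In the necessity direction your text is a plan rather than a proof: the opening reduction to isotypical components is unjustified (an admissible equivariant map between direct sums need not respect isotypical decompositions), the congruence $\deg(f)\equiv\deg(f^C)\pmod p$ is asserted without the correction factors coming from the action on the normal slice that you yourself flag, and you concede that the passage from character identities on cyclic subgroups to global Galois conjugacy of irreducible constituents — the actual content of the theorem — is handled by Atiyah and Tall through equivariant $K$-theory and Adams operations rather than by your outline. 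As written, the central step of the necessity half is missing.
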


  \begin{remark}\label{rmk:atiyah-tall}
    Since $(\bh,\qb)$ is irreducible
    while $(\bc^2,\qb/\bz_2)$ is not,
    the irreducible components of $(\bh,\qb)$ and
    $(\bc^2,\qb/\bz_2)$ are not Galois conjugate in
    pairs. It follows from \Cref{thm:atiyah-tall} that
    \begin{align*}
      \deg(f)\equiv0\pmod{2}
    \end{align*}
    for any admissible $\qb$-equivariant map
    $f:(\bh,\qb)\to(\bc^2,\qb/\bz_2)$.
  \end{remark}

  \noindent
  A comparison between
  \Cref{rmk:congruence}
  and \Cref{rmk:atiyah-tall}
  shows that the result for \Cref{example:q8}
  obtained from the congruence principle
  is more informative.

  Possible extensions of \Cref{example:q8} to
  arbitrary $p$-groups were suggested
  by A. Kushkuley (see \cite{AED}).
  On the other hand,
  notice that $(\bh,\qb)$ (resp.~$(\bc^2,\qb/\bz_2)$)
  is induced by the one-dimensional
  representation $(\bc,\bz_4)$ (resp.~$(\bc,\bz_4/\bz_2)$).
  Furthermore, $\psi(z)=z^2$ is an
  admissible 2-homogeneous $Z_4$-equivariant map
  from $(\bc,\bz_4)$ to $(\bc,\bz_4/\bz_2)$
  and $f_1$ (see \Cref{example:q8}) is in fact the $\qb$-equivariant
  extension of $\psi$
  (see \Cref{fig:diagram}).

    \begin{figure}[h!]
    \centering
    \begin{tikzpicture}
      \node () at (-3.2,1.5) {$(\bh,\qb)$};
      \node () at (3.6,1.5) {$(\bc^2,\qb/\bz_2)$};
      \node () at (-3.2,0) {$(\bc,\bz_4)$};
      \node () at (3.6,0) {$(\bc,\bz_4/\bz_2)$};
      \node () at (-2.4,1.5) {$\owns$};
      \node () at (2.5,1.5) {$\in$};
      \node () at (-2.4,0) {$\owns$};
      \node () at (2.5,0) {$\in$};
      \node (H) at (-2,1.5) {$gz$};
      \node (C2) at (2,1.5) {$gz^2$};
      \node (C) at (-2,0) {$z$};
      \node (CC) at (2,0) {$z^2$};

      \draw[thick,->] (H)--(C2) node[pos=0.5, below] {$f_1$};
      \draw[thick,->] (C)--(CC) node[pos=0.5, below] {$\psi$};
      \draw[thick,->] (C)--(H) node[pos=0.5, left] {induced representation};
      \draw[thick,->] (CC)--(C2) node[pos=0.5, right] {induced representation};
    \end{tikzpicture}
    \caption{$f_1$ as an extension of $\psi$}
    \label{fig:diagram}
  \end{figure}

  The above example shows that if a $G$-representation $V$
  is induced from an $H$-representation $U$ ($H<G$),
  and $f$ is an $H$-equivariant homogeneous admissible map defined on $U$,
  then $f$ can be canonically extended to a $G$-equivariant homogeneous
  admissible map defined on $V$.
  However, the construction of an admissible homogeneous
  $G$-equivariant map becomes more involved
  if we are given a representation
  which is not induced from a subgroup.
  The example considered in the next subsection
  suggests a method to deal with this problem
  in several cases.
  

  \subsection{Example: $S_5$-representations}
  \label{subsection:s5example}
  In this subsection, we will construct
  an admissible 2-homogeneous equivariant
  map from $V^-$ to $V$ (see \Cref{rmk:s5})
  following \Crefrange{step:A}{step:C}, which
  will be illustrated in detail.
  In what follows, for an
  $S_5$-representation $X$ and $\sigma\in S_5$,
  denote by $\rho_X(\sigma)$
  and $\chi_X(\sigma)$ the corresponding
  matrix representation and character,
  respectively.

  \begin{enumerate}
  \renewcommand\labelenumi{(S\arabic{enumi})}
  \item Denote $U:=\sym^2(V^-)$.
    Recall that
    $\chi_{U}(\sigma)=\frac{1}{2}(\chi_{V^-}(\sigma)^2+\chi_{V^-}(\sigma^2))$
    (see, for example, \cite{Serre})
    and using \Cref{tbl:s5_chartbl},
    one has $U=\trep{S_5}\oplus V\oplus V_5$.

  \item One can take the linear
    equivariant map $A:U\to V$
    to be the orthogonal projection,
    which is also given by
    \begin{align}\label{eq:projection}
      A=\frac{\dim(V)}{\abs{S_5}}
          \sum_{\sigma\in S_5}
          {\overline{\chi_V(\sigma)}\,\rho_U(\sigma)}.
    \end{align}
    Take basis $\mathcal{B}_{V^-}:=\set{e_i}_{i=1}^4$
    of $V^-$ and $\mathcal{B}_U:=
    \set{e_i\otimes e_j}_{1\leq i\leq j\leq 4}$ of $U$.
    To obtain $\rho_U(\sigma)$ corresponding to
    $\mathcal{B}_U$, $\sigma\in S_5$,
    it suffices to let
    \begin{align*}
      \hskip -1em
      \rho_{V^-}((12)):={\tiny
      \begin{bmatrix}
        0 & -1 & 0 & 0 \\
        -1 & 0 & 0 & 0 \\
        0 & 0 & -1 & 0 \\
        0 & 0 & 0 & -1
      \end{bmatrix}},\;
      \rho_{V^-}((12345)):={\tiny
      \begin{bmatrix}
        0 & 0 & 0 & -1 \\
        1 & 0 & 0 & -1 \\
        0 & 1 & 0 & -1 \\
        0 & 0 & 1 & -1
      \end{bmatrix}}
    \end{align*}
    be the matrices corresponding to $\mathcal{B}_{V^-}$,
    and use formula
    \begin{align*}
      \rho_U(\sigma)(e_i\otimes e_j)
      =\rho_{V^-}(\sigma)(e_i)\otimes\rho_{V^-}(\sigma)(e_j).
    \end{align*}
    Substitution of $\rho_U(\sigma)$, $\sigma\in S_5$,
    in \eqref{eq:projection} yields
    \begin{align*}
      A=
      \tiny
      \def\arraystretch{2}
      \begin{bmatrix}
        \frac{3}{5} & -\frac{1}{5} & -\frac{1}{5} & -\frac{1}{5} &
            -\frac{2}{5} & -\frac{1}{5} & -\frac{1}{5} & -\frac{2}{5} &
            -\frac{1}{5} & -\frac{2}{5} \\
        -\frac{2}{15} & \frac{4}{15} & \frac{4}{15} & \frac{4}{15} &
            -\frac{2}{15} & \frac{4}{15} & \frac{4}{15} & \frac{8}{15} &
            \frac{4}{15} & \frac{8}{15} \\
        -\frac{2}{15} & \frac{4}{15} & \frac{4}{15} & \frac{4}{15} &
            \frac{8}{15} & \frac{4}{15} & \frac{4}{15} & -\frac{2}{15} &
            \frac{4}{15} & \frac{8}{15} \\
        -\frac{2}{15} & \frac{4}{15} & \frac{4}{15} & \frac{4}{15} &
            \frac{8}{15} & \frac{4}{15} & \frac{4}{15} & \frac{8}{15} &
            \frac{4}{15} & -\frac{2}{15} \\
        -\frac{2}{5} & -\frac{1}{5} & -\frac{1}{5} & -\frac{1}{5} &
            \frac{3}{5} & -\frac{1}{5} & -\frac{1}{5} & -\frac{2}{5} &
            -\frac{1}{5} & -\frac{2}{5} \\
        \frac{8}{15} & \frac{4}{15} & \frac{4}{15} & \frac{4}{15} &
            -\frac{2}{15} & \frac{4}{15} & \frac{4}{15} & -\frac{2}{15} &
            \frac{4}{15} & \frac{8}{15} \\
        \frac{8}{15} & \frac{4}{15} & \frac{4}{15} & \frac{4}{15} &
            -\frac{2}{15} & \frac{4}{15} & \frac{4}{15} & \frac{8}{15} &
            \frac{4}{15} & -\frac{2}{15} \\
        -\frac{2}{5} & -\frac{1}{5} & -\frac{1}{5} & -\frac{1}{5} &
            -\frac{2}{5} & -\frac{1}{5} & -\frac{1}{5} & \frac{3}{5} &
            -\frac{1}{5} & -\frac{2}{5} \\
        \frac{8}{15} & \frac{4}{15} & \frac{4}{15} & \frac{4}{15} &
            \frac{8}{15} & \frac{4}{15} & \frac{4}{15} & -\frac{2}{15} &
            \frac{4}{15} & -\frac{2}{15} \\
        -\frac{2}{5} & -\frac{1}{5} & -\frac{1}{5} & -\frac{1}{5} &
            -\frac{2}{5} & -\frac{1}{5} & -\frac{1}{5} & -\frac{2}{5} &
            -\frac{1}{5} & \frac{3}{5}
      \end{bmatrix}.
    \end{align*}

  \item The column vectors of the projection
    matrix $A$ span $V$ and one can
    obtain a basis $\mathcal{B}_V$
    of $V\subset U$ using the
    {\em Gram-Schmidt process}.
    With $\mathcal{B}_{V^-}$ and
    $\mathcal{B}_V$,
    $\phi:=A\circ\triangle$
    can be viewed as a map from
    $\bc^4$ to $\bc^4$.
    To be more explicit,
    $\phi=\left[\phi_1,\dots,\phi_{4}\right]^T$,
    where $\phi_i=\phi_i(x_1,x_2,x_3,x_4)$, $i=1,\dots,4$,
    is a 2-homogeneous polynomial.
    Denote $\mathcal{P}:=\set{\phi_i}_{i=1}^4$ and
    $\mathcal{P}_k:=\set{\phi_i|_{x_k=1}}_{i=1}^4$.
    Then, $\phi$ admits no non-trivial zero, i.e.,
    $\mathcal{P}$
    admits no non-trivial common zeros,
    if and only if $\mathcal{P}_k$
    admits no common zeros, i.e.,
    the Gr\"obner basis of the ideal
    generated by $\mathcal{P}_k$ contains
    only $1$, for $k=1,2,3,4$ (see \Cref{prop:null}).
    One can use Mathematica to show that
    it is indeed the case and
    thereby $\phi$ is admissible.
  \end{enumerate}

  \noindent
  To summarize, one has

  \begin{prop}\label{prop:s5}
    There exists an admissible 2-homogeneous
    $S_5$-equivariant map $\phi:V^-\to V$.
  \end{prop}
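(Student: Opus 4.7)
The plan is to realize $\phi$ as a composition $\phi = A\circ\triangle$ in accordance with the general construction from Section~6.1, with $k=2$. The proof then reduces to the three steps (S1)--(S3) described in that section, and I would carry them out in order.

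For (S1), I would first confirm that $V$ occurs as a constituent of $U := \mathrm{Sym}^2(V^-)$. Since the character of a symmetric square is
\[
  \chi_U(\sigma)=\tfrac{1}{2}\bigl(\chi_{V^-}(\sigma)^2+\chi_{V^-}(\sigma^2)\bigr),
\]
this is a routine computation: evaluate $\chi_U$ on each conjugacy class of $S_5$ using the character table, take inner products with the irreducible characters, and read off the decomposition $U \cong \mathbf{1}\oplus V\oplus V_5$. In particular, $\dim(\mathrm{Hom}_{S_5}(V,U))=1$, so a nonzero $S_5$-equivariant map $A:U\to V$ exists and is unique up to scalar.

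For (S2), I would take the orthogonal projection onto the $V$-isotypical component, which can be written explicitly by the averaging formula
\[
  A=\frac{\dim V}{|S_5|}\sum_{\sigma\in S_5}\overline{\chi_V(\sigma)}\,\rho_U(\sigma).
\]
Concretely, I would fix a basis $\{e_1,\dots,e_4\}$ of $V^-$ on which the generators $(1\,2)$ and $(1\,2\,3\,4\,5)$ act by specific matrices (determined by a presentation of $V^-$), then propagate to the basis $\{e_i\otimes e_j\}_{i\le j}$ of $U$ via $\rho_U(\sigma)(e_i\otimes e_j)=\rho_{V^-}(\sigma)e_i\otimes \rho_{V^-}(\sigma)e_j$. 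Inserting these matrices into the averaging formula yields $A$ as an explicit $10\times 10$ matrix of rank $4$; a Gram--Schmidt step on its column space gives a basis of $V\subset U$, in which $\phi=A\circ\triangle$ is represented by four quadratic forms $\phi_1,\dots,\phi_4\in\mathbb{C}[x_1,\dots,x_4]$.

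For (S3), I would check admissibility via \Cref{prop:admissible}, which by \Cref{prop:null} reduces to verifying that the Gr\"obner basis of each dehomogenized ideal $\langle\phi_1|_{x_k=1},\dots,\phi_4|_{x_k=1}\rangle\subset\mathbb{C}[x_1,\dots,\hat x_k,\dots,x_4]$ contains the constant $1$ for $k=1,2,3,4$; dehomogenizing on each coordinate patch suffices because any nonzero common zero of the homogeneous $\phi_i$ has some nonzero coordinate. This is the main obstacle: it is not an a priori structural argument but a direct symbolic computation, and there is no evident reason beyond this calculation forcing $\ker A \cap \triangle(V^-) = \{0\}$. I would therefore carry it out in Mathematica; assuming the Gr\"obner basis returns $\{1\}$ in each of the four patches, admissibility follows and the proposition is proved, with the additional information that $\deg(\phi)=2^4=16$.
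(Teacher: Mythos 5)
Your proposal follows essentially the same route as the paper: decompose $\mathrm{Sym}^2(V^-)$ via the symmetric-square character formula, build $A$ as the averaged orthogonal projection onto the $V$-isotypical component from explicit matrices for the generators $(1\,2)$ and $(1\,2\,3\,4\,5)$, and verify admissibility of $A\circ\triangle$ by checking that the Gr\"obner bases of the four dehomogenized ideals reduce to $\set{1}$ in Mathematica. The paper's proof rests on exactly this computation (which it reports as succeeding), so your argument is correct modulo actually running the same verification.
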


  The construction of an admissible
  2-homogeneous equivariant map $\phi$
  in \Cref{prop:s5},
  which involves computing the
  orthogonal projection matrix
  and verfying the criterion provided by
  \Cref{prop:admissible},
  is an ad hoc approach;
  it is difficult to obtain
  a global result for arbitrary $S_n$
  by this kind of constructions.
  In fact, for applications
  similar to \Cref{coro:sn},
  it suffices to know that
  an admissible homogeneous
  equivaraint map exists while its
  explicit formula is not necessary.
  In the next two subsections,
  we will employ a more universal
  technique to extend \Cref{prop:s5}
  to $S_n$ for arbitrary odd $n$.

  \subsection{Extension of \Cref{prop:s5}}
  Let us describe explicitly the
  setting to which we want to extend
  \Cref{prop:s5}.

  \begin{thm}\label{thm:sn}
    Let $(S_n;[n])$ be the natural action of
    the symmetric group $S_n$ on the set
    $[n]=\set{1,\dots,n}$
    and $V,V^-$ be the modules corresponding to the
    irreducible representations $\rho^a_{(S_n,[n])},
    \rho^a_{(S_n,[n])}\otimes 1_{S_n}^-$, repectively
    ($\sgnrep{S_n}$ is the sign representation).
    Assume that $n$ is odd.
    Then, there exists
    an admissible 2-homogeneous equivariant
    map from $V^-$ to $V$.
  \end{thm}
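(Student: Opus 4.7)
My plan is to construct $\phi$ via a Norton-algebra-type projection, using that $V$ appears with multiplicity one in $\sym^2(V)$, and then verify admissibility by an explicit coordinate computation that exploits the parity of $n$.

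First, I would realize the two representations concretely: take $V=\set{(x_1,\dots,x_n)\in\bc^n : \sum_i x_i=0}$ with $S_n$ acting by permutation of coordinates, so that $V^-$ has the same underlying vector space equipped with the twisted action $\sigma\cdot_{-}x = \mathrm{sgn}(\sigma)\,\sigma(x)$. Because $\sgnrep{S_n}\otimes\sgnrep{S_n}\cong\trep{S_n}$, the two sign twists cancel on the symmetric square, giving a canonical identification $\sym^2(V^-)\cong\sym^2(V)$ as $S_n$-representations; this is the key reduction.

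Next, I would decompose $\sym^2(V)$. Writing $\bc^n=\trep{S_n}\oplus V$ and counting $S_n$-orbits on the set of unordered pairs from $[n]$ (with and without repetition), one obtains $\sym^2(\bc^n)\cong 2\,\trep{S_n}\oplus 2V\oplus Y_{(n-2,2)}$ by Young's rule, and stripping off the $\trep{S_n}\oplus V$ coming from $\trep{S_n}\otimes(\trep{S_n}\oplus V)$ yields $\sym^2(V)\cong\trep{S_n}\oplus V\oplus Y_{(n-2,2)}$. Thus $V$ appears with multiplicity one and, by Schur's lemma, the equivariant projection $A:\sym^2(V)\to V$ is unique up to scalar. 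Composing with the diagonal $\triangle(v)=v\otimes v$ and normalizing yields the commutative equivariant product
\[
(x*y)_i \;=\; x_i y_i \;-\; \frac{1}{n}\sum_{j=1}^{n} x_j y_j,
\]
which one checks directly is $S_n$-equivariant, takes values in $V$, and is non-zero. Setting $\phi(x):=x*x$, equivariance as a map $V^-\to V$ follows from $\phi(\sigma\cdot_{-}x)=\mathrm{sgn}(\sigma)^2(\sigma x)*(\sigma x)=\sigma\phi(x)$.

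Finally, I would verify admissibility. The equation $\phi(x)=0$ reads $x_i^2=\tfrac{1}{n}\sum_j x_j^2=:c$ for every $i$, forcing $x_i\in\set{+\sqrt{c},-\sqrt{c}}$, and the constraint $\sum_i x_i=0$ then requires equally many $+\sqrt{c}$ and $-\sqrt{c}$ entries. When $n$ is odd this is impossible unless $c=0$, in which case $x=0$; admissibility follows. The main conceptual step is not the calculation but the observation that squaring cancels the sign twist on the domain, reducing the existence question to the absence of $2$-nilpotents in the standard Norton algebra on $V$; the oddness of $n$ is the sharp obstruction, since for even $n$ a balanced $\pm 1$ vector furnishes a genuine non-trivial $2$-nilpotent and the construction fails.
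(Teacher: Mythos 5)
Your proof is correct and follows the same two-step strategy as the paper: first, squaring cancels the sign twist, so a $2$-homogeneous equivariant map $V\to V$ serves equally as one from $V^-$ to $V$ (this is exactly \Cref{prop:reduce}); second, one exhibits a commutative equivariant product on $V$ with no $2$-nilpotents, the oddness of $n$ entering precisely through the impossibility of a balanced $\pm\sqrt{c}$ vector with zero coordinate sum. The one genuine difference is the model for the Norton algebra: the paper builds it inside the permutation module on two-element subsets of $[n]$, using the basis $f_i=\sum_{j\neq i}e_{ij}$ of an embedded copy of the natural module, whereas you build it directly inside $\bc^n$, which yields the cleaner closed formula $(x*x)_i=x_i^2-\tfrac{1}{n}\sum_j x_jy_j\big|_{y=x}$ and a $2$-nilpotent computation with no degenerate cases (the paper's version has a spurious special case at $n=4$, harmless since that $n$ is even). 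Two minor remarks: the Young's-rule decomposition of $\sym^2(V)$ is only motivational and is not quite right for $n=3$ (the $Y_{(n-2,2)}$ summand is absent there), but your explicit admissibility verification is independent of it and covers every odd $n$; and since you verify $\phi^{-1}(0)=\set{0}$ directly, you do not actually need the multiplicity-one/Schur argument at all.
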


  \noindent
  The following statement is a starting
  point for proving \Cref{thm:sn}.

  \begin{prop}\label{prop:reduce}
    Let $V$, $V^-$ be as in \Cref{thm:sn}
    and $W$ an arbitrary $S_n$-representation.
    Then, there exists an admissible
    2-homogeneous equivariant map
    from $V^-$ to $W$ if and only
    if there exists an admissible
    2-homogeneous equivariant map
    from $V$ to $W$.
  \end{prop}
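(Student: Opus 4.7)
My plan is to exploit the fact that $V^- = V \otimes \sgnrep{S_n}$, together with the observation that tensoring by a one-dimensional representation behaves very cleanly under $\sym^2$. The key point is that $\sgnrep{S_n} \otimes \sgnrep{S_n} \cong \trep{S_n}$, so that $\sym^2(V^-)$ and $\sym^2(V)$ are canonically isomorphic as $S_n$-representations, and this isomorphism carries the diagonal image onto itself.

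First I would establish the identification. Since $\sgnrep{S_n}$ is one-dimensional, under the swap $\tau$ acting on $(V^-)^{\otimes 2} = V^{\otimes 2} \otimes \sgnrep{S_n}^{\otimes 2}$, the tensor factor $\sgnrep{S_n}^{\otimes 2}$ sits inside the symmetric part (its alternating square vanishes for dimensional reasons). Hence
\begin{equation*}
   \sym^2(V^-) \;\cong\; \sym^2(V) \otimes \sgnrep{S_n}^{\otimes 2} \;\cong\; \sym^2(V)
\end{equation*}
as $S_n$-representations; concretely, fixing a non-zero $\ell$ in the underlying line of $\sgnrep{S_n}$, the iso sends $(v\otimes\ell)(v'\otimes\ell)\mapsto vv'$. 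Under the underlying vector-space identification $V^-\leftrightarrow V$ given by $v\otimes\ell\leftrightarrow v$, the diagonal map $\triangle_{V^-}$ matches $\triangle_V$ pointwise, so $\triangle_{V^-}(V^-)$ and $\triangle_V(V)$ coincide as subsets of the common space $\sym^2(V^-)=\sym^2(V)$.

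Next I would transfer linear operators. The space of $S_n$-equivariant linear maps $\sym^2(V)\to W$ and the corresponding space $\sym^2(V^-)\to W$ are identified via the equivariant iso above; call this correspondence $A\leftrightarrow A^-$. Then $\ker A$ and $\ker A^-$ are the same subspace of $\sym^2(V)=\sym^2(V^-)$, so the admissibility condition in \Cref{prop:admissible},
\begin{equation*}
   \ker A\cap\triangle_V(V)=\{\mathbf 0\}
   \quad\Longleftrightarrow\quad
   \ker A^-\cap\triangle_{V^-}(V^-)=\{\mathbf 0\},
\end{equation*}
is preserved in both directions. Combining with \Cref{prop:admissible} (which characterizes admissibility of $\phi = A\circ\triangle$ in terms of precisely this intersection) yields the desired equivalence.

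I do not expect a serious obstacle; the whole argument is essentially the structural observation $\sym^2(V\otimes L)\cong\sym^2(V)\otimes L^{\otimes 2}$ for one-dimensional $L$, applied to $L=\sgnrep{S_n}$ with $L^{\otimes 2}=\trep{S_n}$. The only thing to handle carefully is book-keeping: verifying that the isomorphism of symmetric squares is $S_n$-equivariant (not just a vector-space isomorphism), and that the (non-equivariant) identification $V^-\leftrightarrow V$ used to compare the two diagonal maps does send one image onto the other. Neither point requires $n$ to be odd—this parity hypothesis enters only later, presumably in producing an actual admissible map from $V$ (or equivalently $V^-$) to the specified target $W$.
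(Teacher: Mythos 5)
Your argument is correct, and it reaches the paper's conclusion by a formally different route that rests on the same one--line fact. The paper works directly at the level of the map: writing $\rho_{V^-}(\sigma)=\pm\rho_V(\sigma)$ according to the parity of $\sigma$ (formula \eqref{eq:Vminus_action}), it observes that any $2$-homogeneous $\phi$ satisfies $\phi(-x)=\phi(x)$, so the \emph{very same} map $\phi$, viewed on the common underlying space $\bc^{n-1}$, is equivariant for both module structures, and admissibility is then literally the same condition on the same map. You instead pass through the polarization $\phi=A\circ\triangle$ and the equivariant isomorphism $\sym^2(V\otimes L)\cong\sym^2(V)\otimes L^{\otimes 2}\cong\sym^2(V)$ for $L=\sgnrep{S_n}$, match the diagonal images and the kernels, and invoke \Cref{prop:admissible}. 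Both proofs hinge on the single fact that the sign character squares to the trivial one; the paper's version is shorter and applies verbatim to any (not necessarily polynomial) map satisfying $\phi(-x)=\phi(x)$, while yours makes the structural reason transparent and shows at once that the equivalence persists for $k$-homogeneous maps whenever $L^{\otimes k}$ is trivial. Two small bookkeeping points you should make explicit: first, your transfer presupposes that every $2$-homogeneous equivariant map is of the form $A\circ\triangle$ with $A$ linear and equivariant, which holds because $\triangle(V)$ spans $\sym^2(V)$ (as the paper notes after introducing $\triangle$); second, your closing remark that the parity of $n$ plays no role here is accurate --- it enters only in \Cref{prop:norton}.
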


  \begin{proof}
    By taking the standard basis in
    $V$ (resp.~$V^-$), any map defined on
    $V$ (resp.~$V^-$) can be identified with
    a map on $\bc^{n-1}$.
    Let $\rho_V(\sigma)$ ($\sigma\in S_n$)
    be matrices representing the $S_n$-action
    on $V$.  Since $V^-=V\otimes\sgnrep{S_n}$,
    one can use the simple character argument to show that
    the formula
    \begin{align}
      \label{eq:Vminus_action}
      \rho_{V^-}(\sigma):=\begin{cases}
        \rho_V(\sigma),&\mbox{if }\sigma\in A_n,\\
        -\rho_V(\sigma),&\mbox{if }\sigma\in S_n\setminus A_n,
      \end{cases}
    \end{align}
    defines matrices representing the $S_n$-action
    on $V^-$.

    Assume that $\phi:V\to W$ is an admissible
    2-homogeneous equivariant map.  Then,
    \begin{align*}
      \phi(\rho_{V^-}(\sigma)v)=\begin{cases}
        \phi(\rho_V(\sigma)v),&\sigma\in A_n\\
        \phi(-\rho_V(\sigma)v),&\sigma\in S_n\setminus A_n
      \end{cases}
      =\phi(\rho_V(\sigma)v)=\rho_W(\sigma)\phi(v).
    \end{align*}
    Therefore, $\phi$ can be viewed as an admissible
    $2$-homogeneous equivariant map from $V^-$ to $W$
    as well.
    Similarly, one can show that
    if $\psi:V^-\to W$ is an admissible
    2-homogeneous equivariant map, then
    $\psi$ can be viewed as an admisslbe 2-homogeneous
    equivariant map from $V$ to $W$ as well.
    The result follows.
  \end{proof}

  \noindent
  \Cref{prop:reduce} reduces the
  proof of \Cref{thm:sn} to providing
  an admissible 2-homogeneous
  equivariant map from $V$ to $V$.
  Clearly, the later problem is equivalent
  to the existence of a bi-linear
  commutative (not necessarily associative)
  mulitplication
  $*:V\times V\to V$ commuting with
  the $G$-action on $V$ such that
  the algebra $(V,*)$ does not have
  2-nilpotents.
  In the next subsection,
  the existence
  of such multiplication will be
  studied using the Norton algebra
  techniques.
  
  \subsection{Norton Algebras without 2-nilpotents}\label{subsec:Norton}
  In this subsection, we will recall
  the construction of the Norton Algebra
  (see also \cite{Cameron-Goethals-Seidel})
  and apply related techniques to prove \Cref{thm:sn}.

  Let $\Omega$ be a finite $G$-set
  ($\abs{\Omega}=n$) and $U$ the
  associated permutation representation.
  With the standard basis
  $\set{e_g}_{g\in G}$,
  $u\in U$ can be viewed as
  a vector in $\bc^n$ and,
  hence, $U$ is endowed
  with the natural
  componentwise multiplication
  $u\cdot v:=[u_1v_1,\dots,u_nv_n]^T$
  and the scalar product
  $\left<u,v\right>:=\sum_{i=1}^n{u_i\overline{v_i}}$
  for $u=[u_1,\dots,u_n]^T$ and $v=[v_1,\dots,v_n]^T$.
  Then, $(U,\cdot)$ is a commutative and
  associative algebra with the $G$-action
  commuting with the multiplication ``$\cdot$".

  Let $W\subset U$ be a non-trivial $G$-invariant
  subspace.  Denote by $P:U\to W$ the orthogonal
  projection with respect to $\left<\cdot,\cdot\right>$
  and define the Norton algebra $(W,\star)$
  as follows: $w_1\star w_2:=P(w_1\cdot w_2)$
  for any $w_1,w_2\in W$.
  It is clear that the Norton algebra is
  commutative but not necessarily associative
  complex algebra with the $G$-action commuting
  with the multiplication $\star$.
  In particular, the quadratic map $w\mapsto w\star w$
  is $G$-equivariant on $W$.

  In connection to \Cref{thm:sn}, consider
  $G=S_n$.  Recall that $S_n$ acts naturally on
  $\bn_n:=\set{1,\dots n}$ by permutation.
  Let $\Omega$ be the set of all two-element
  subsets of $\bn_n$, i.e.,
  $\Omega:=\set{\set{i,j}:1\leq i<j\leq n}$,
  on which $S_n$ acts by
  $\sigma(\set{i,j})=\set{\sigma(i),\sigma(j)}$
  for any $\sigma\in S_n$ and $\set{i,j}\in\Omega$.
  In this case, the permutation representation
  associated to $\Omega$ is $U=\trep{S_n}\oplus W\oplus W'$,
  where $\trep{S_n}$, $W$ and $W'$ are
  irreducible $S_n$-representations
  with $\dim(\trep{S_n})=1$,
  $\dim(W)=n-1$ and $\dim(W')=n(n-3)/2$.

  Denote by $\mathcal{B}:=\set{e_{ij}:1\leq i<j\leq n}$
  the standard basis of $U$ and
  let $f_i:=\sum_{j\neq i}{e_{ij}}$.
  Since $\set{f_i:i\in\bn_n}$ is linearly independent
  and $\sigma(f_i)=f_{\sigma(i)}$
  for any $\sigma\in S_n$ and $i\in\bn_n$,
  $\set{f_i:i\in\bn_n}$ forms a basis
  of an $S_n$-subrepresentation $F$.
  To be more explicit, $F=\trep{S_n}\oplus W$,
  where $\trep{S_n}=\set{z\sum_{i=1}^n{f_i}:z\in\bc}$
  and $W=\set{\sum_{i=1}^n{z_if_i}:z_i\in\bc,
  \sum_i{z_i}=0}$.
  Note that $\set{f_n-f_j:j\in\bn_{n-1}}$ forms
  a basis of $W$.
  Then, $(W,\star)$ is a Norton algebra
  satisfying the following property.

  \begin{prop}\label{prop:norton}
    The Norton algebra $(W,\star)$ admits
    $2$-nilpotents if and only if $n$ is even.
    In such a case, $w\star w=0$ if and only
    if $w=\alpha(\sum_{i\in I}{f_i}-\sum_{i\notin I}{f_i})$
    for some $\alpha\in\bc$ and $I\subset\bn_n$
    with $\abs{I}=n/2$.
  \end{prop}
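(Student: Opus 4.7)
The plan is to derive an explicit formula for $w\star w$ in the basis $\set{f_k}$ and then read off the $2$-nilpotents. I write $w=\sum_{k=1}^n z_k f_k$ with $\sum_k z_k=0$ and, using $f_k=\sum_{j\ne k}e_{kj}$, unpack $w$ in the standard basis $\set{e_{ij}}$ of $U$ as $w=\sum_{i<j}(z_i+z_j)e_{ij}$. Since the componentwise product on $U\cong\bc^\Omega$ gives $w\cdot w=\sum_{i<j}(z_i+z_j)^2 e_{ij}$, the task reduces to computing the orthogonal projection $P(w\cdot w)\in W$.

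I would factor $P$ by first projecting onto $F:=\trep{S_n}\oplus W=\myspan\set{f_k:k\in\bn_n}$ and then subtracting the $\trep{S_n}$-component inside $F$. The Gram matrix of $\set{f_k}$ is $(n-2)I+J$ (since $\langle f_i,f_j\rangle$ equals $n-1$ for $i=j$ and $1$ otherwise); solving the resulting normal equations by $S_n$-equivariance gives
\[
P_F(e_{ij})=\frac{1}{n-1}(f_i+f_j)-\frac{1}{(n-1)(n-2)}\sum_{k\ne i,j} f_k.
\]
Expanding $(z_i+z_j)^2$, summing, and repeatedly using $\sum_k z_k=0$ (so that $\sum_{j\ne k} z_j=-z_k$ and $\sum_{i<j,\,i,j\ne k} z_iz_j=z_k^2-S/2$, where $S:=\sum_k z_k^2$), I expect the bookkeeping to collapse to the closed form
\[
w\star w=\frac{n-4}{n(n-2)}\sum_{k=1}^n (n z_k^2-S)\,f_k,
\]
whose coefficient vector indeed sums to zero, confirming $w\star w\in W$.

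The remainder is combinatorial. Assuming $n\ne 4$, the equation $w\star w=\mathbf{0}$ is equivalent to $z_k^2=S/n$ for every $k$, i.e., $z_k=\pm c$ for a common $c\in\bc$; combined with $\sum_k z_k=0$ this forces equal numbers of positive and negative signs. When $n$ is odd this is impossible unless $c=0$, giving $w=\mathbf{0}$ and hence no $2$-nilpotents. When $n$ is even the admissible sign patterns are parametrized exactly by subsets $I\subset\bn_n$ with $\abs{I}=n/2$, yielding $w=c(\sum_{i\in I}f_i-\sum_{i\notin I}f_i)$; conversely, every such $w$ satisfies $n z_k^2=S$ and is therefore a $2$-nilpotent.

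I expect the main obstacle to be the projection computation itself---specifically the algebraic simplification producing the prefactor $(n-4)/[n(n-2)]$---rather than anything conceptual. One caveat worth flagging: when $n=4$ this prefactor vanishes, so the formula shows that $\star\equiv 0$ on $W$ and every element of $W$ is a $2$-nilpotent, which is not captured by the stated characterization; this degenerate case should either be handled separately or excluded by tacitly taking $n\ge 5$ (which is the regime actually used in \Cref{thm:sn}).
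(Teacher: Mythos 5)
Your proof is correct, and it reaches the same core identity as the paper --- namely that the obstruction to $w\star w=0$ is the quantity $\sum_{i\neq k}(z_i+z_k)^2=\sigma_2+(n-4)z_k^2$ being independent of $k$, followed by the same sign-pattern combinatorics under the constraint $\sum_k z_k=0$. The difference is in how you get there: the paper never computes the projection $P$ explicitly, but instead observes that $P(w^2)=0$ iff $w^2\perp W$, i.e.\ iff $\left<w^2,f_n-f_i\right>=0$ for all $i$, which requires only the inner products $\left<w^2,f_k\right>$; you instead invert the Gram matrix $(n-2)I+J$ to obtain $P_F(e_{ij})$ and derive the closed form $w\star w=\frac{n-4}{n(n-2)}\sum_k(nz_k^2-\sigma_2)f_k$ (I checked this formula; it is correct). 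Your route costs more bookkeeping but buys an explicit formula for the Norton product, which the paper's argument does not produce. You also correctly flag that the case $n=4$ is degenerate: there the prefactor $n-4$ vanishes and $\star\equiv0$ on $W$, so every element is a $2$-nilpotent and the stated parametrization of the nilpotents fails. The paper's proof silently makes the same slip --- the step ``hence $z_k^2$ is independent of $k$'' from $\sigma_2+(n-4)z_k^2=c$ requires $n\neq4$ --- so your caveat is a genuine (if minor, since only odd $n$ is used in \Cref{thm:sn}) improvement.
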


  \begin{proof}
    Let $P:U\to W$ be the orthogonal projection.
    We have to solve the equation $P(w^2)=0$
    for $w\in W$.  Note that $P(w^2)=0$
    if and only if $\left<w^2,f_n-f_i\right>=0$
    for any $i\in\bn_{n-1}$
    or, equivalently,
    \begin{align}\label{eq:cond}
      \left<w^2,f_i\right>=c
    \end{align}
    for any $i\in\bn_n$ and some $c\in\bc$.
    Set $w=\sum_{i=1}^n{z_if_i}$ with
    $\sum_{i=1}^n{z_i}=0$.
    Then,
    \begin{align*}
      w=\;&\sum_{1\leq i<j\leq n}{(z_i+z_j)e_{ij}},\\
      w^2=\;&\sum_{1\leq i<j\leq n}{(z_i+z_j)^2e_{ij}}.
    \end{align*}
    On the other hand, since
    \begin{align*}
      \left<e_{ij},f_k\right>=\begin{cases}
        1,&k\in\set{i,j},\\
        0,&k\notin\set{i,j},
      \end{cases}
    \end{align*}
    $\left<w^2,f_k\right>=\sum_{i\neq k}{(z_i+z_k)^2}$
    and it follows from \eqref{eq:cond} that
    \begin{align*}
      c=\;&\sum_{i\neq k}(z_i+z_k)^2=\sum_{i=1}^n(z_i+z_k)^2
      -4{z_k}^2=\sum_{i=1}^n{z_i}^2+2z_k\sum_{i=1}^nz_i+
      (n-4){z_k}^2\\
      =\;&\sigma_2+(n-4){z_k}^2,
    \end{align*}
    where $k\in\bn_n$ and $\sigma_2:=\sum_{i=1}^n{z_i}^2$.
    Hence, ${z_k}^2$ is independent of $k$ and
    ${z_k}^2=\sigma_2/n$.
    Let $\alpha$ be an arbitrary (complex)
    root of the equation $z^2=\sigma_2/n$.
    Then, $z_k=\pm\alpha$. Denote by
    $I\subset\bn_n$ the set of indices $k$
    such that $z_k=\alpha$.
    Since $\sum_{i=1}^n{z_i}=0$,
    one has either

    \begin{itemize}
    \item $\alpha=0$, or
    \item $\alpha\neq0$, $n$ is even and $\abs{I}=n/2$.
    \end{itemize}

    \noindent
    The result follows.
  \end{proof}

  \begin{proof}[Proof of \Cref{thm:sn}]
    The result simply follows from \Cref{prop:reduce}
    and \Cref{prop:norton}.
  \end{proof}

  \section{Applications to Congruence Principle}\label{sec:7}

  \subsection{The Brouwer Degree}
  Recall the construction of the Brouwer degree.
  Let $M$ and $N$ be compact, connected,
  oriented $n$-dimensional manifolds (without boundary),
  and let $f:M\to N$ be a smooth map.
  Let $y\in N$ be a regular value of $f$.
  Then, $f^{-1}(y)$ is either empty
  (in which case, define the Brouwer degree of $f$ to be zero),
  or consists of finitely many points, say $x_1,\dots,x_k$.
  In the latter case, for each $i=1,\dots,k$,
  take the tangent spaces $T_{x_i}M$ and $T_{y}N$
  with the corresponding orientations.
  Then, the derivative $D_{x_i}f:T_{x_i}M\to T_{y}N$
  is an isomorphism.
  Define the Brouwer degree by the formula
  \begin{equation}\label{eq:Brouwer-degree}
    \deg(f)=\deg(f,y):=\sum_{i=1}^k\mathrm{sign}(\det(D_{x_i}f)).
  \end{equation}
  It is possible to show that $\deg(f,y)$ is independent of
  the choice of a regular value $y\in N$
  (see, for example, \cite{Nirenberg,Deimling,Milnor}). 
  If $f:M\to N$ is continuous,
  then one can approximate $f$ by a smooth map
  $g:M\to N$ and take $\deg(g)$ to be the Brouwer degree
  of $f$ (denoted $\deg(f)$).
  Again, $\deg(f)$ is independent of a close approximation.

  Finally, let $M$ be as above and let $W$ be the
  oriented Euclidean space such that $\dim M=\dim W-1$.
  Given a continuos map $f:M\to W\setminus\set{0}$,
  define the map $\widetilde{f}:M\to S(W)$ by
  $\widetilde{f}(x)={f(x)\over\norm{f(x)}}$ ($x\in M$).
  Then, $\deg(\widetilde{f})$ is correctly defined.
  Set $\deg(f):=\deg(\widetilde{f})$ and call it the Brouwer degree of $f$.
  In particular, if $V$ and $W$ are oriented
  Euclidean spaces of the same dimension and $f:V\to W$ is admissible,
  then $f$ takes $S(V)$ to $W\setminus\set{0}$.
  Define the Brouwer degree of $f$ by $\deg(f):=\deg(f|_{S(V)})$. 
      
  \subsection{Congruence Principle for Solvable Groups}
  Combining \Cref{thm:solvable_groups} and
  the congruence principle, one immediately obtains
  the following result.

  \begin{coro}\label{coro:maps-represent-solv}
  Let $G$ be a solvable group and let $V$ and $W$ be two
  $n$-dimensional representations. Assume, in addition, that $V$ is 
  non-trivial and irreducible, and suppose that there exists an equivariant map
  $\Phi:S(V)\to W\setminus\set{0}$.
  Then, $\alpha(V)>1$ and for any  equivariant map
  $\Psi:S(V)\to W\setminus\set{0}$, one has  
  \begin{align}
    \deg(\Psi)\equiv\deg(\Phi)\pmod{\alpha(V)}
  \end{align}
  \end{coro}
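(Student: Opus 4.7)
The plan is to assemble the conclusion directly from two ingredients already available in the paper: the characterization of solvable groups via $\alpha$-characteristic (\Cref{thm:solvable_groups}) and the congruence principle stated in the introduction.

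First, I would observe that the hypothesis ``$G$ solvable and $V$ non-trivial irreducible'' puts us exactly in the scope of \Cref{thm:solvable_groups}, whose conclusion gives $\alpha(V) = \alpha(S(V)) > 1$. This settles the first assertion of the corollary and, crucially, ensures that the modulus in the congruence is non-trivial, so that the statement carries genuine information.

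Next, I would apply the congruence principle with $M = S(V)$ (viewed as a compact, connected, oriented, smooth $(n-1)$-dimensional $G$-manifold, on which $G$ acts smoothly by restricting the linear action) and with the given $(n-1+1) = n$-dimensional orthogonal $G$-representation $W$. The assumption that an equivariant map $\Phi : S(V) \to W \setminus \{0\}$ exists is precisely the hypothesis needed to invoke \eqref{eq:congruence-principle-general}. Conclusion: for any equivariant $\Psi : S(V) \to W \setminus \{0\}$,
\[
  \deg(\Psi) \equiv \deg(\Phi) \pmod{\alpha(S(V))},
\]
and since $\alpha(V) := \alpha(S(V))$ by our convention in Section~\ref{sec:alpha-cracteristic}, the asserted congruence follows.

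The only minor point worth flagging is a bookkeeping one: the corollary speaks of $V$ and $W$ as ``$n$-dimensional representations,'' while the congruence principle in the introduction is phrased with $M$ of dimension $n$ and $W$ of dimension $n+1$. One should therefore read ``$n$-dimensional'' in the corollary as the common complex dimension (so $S(V)$ has real dimension $2n-1$ and $W$ has real dimension $2n$), which is consistent with the equality of dimensions needed for a Brouwer degree to be defined on $S(V) \to S(W)$. There is no genuine obstacle here; the entire content of the corollary is an immediate specialization, and no additional argument beyond invoking the two cited results is required.
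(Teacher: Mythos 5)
Your proposal is correct and matches the paper's own treatment exactly: the paper introduces this corollary with the single line ``Combining \Cref{thm:solvable_groups} and the congruence principle, one immediately obtains the following result,'' which is precisely the two-step argument you give. Your remark on the real-versus-complex dimension bookkeeping is a sensible clarification but does not change the argument.
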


  \noindent
  In addition, one has the following 

  \begin{coro}
  Let $G$ be a solvable group and $V,W\in\irrnt(G)$.
  If $V$ and $W$ are Galois-equivalent, then
  $\alpha(V)>1$ and
  $\deg(f)\not\equiv0\pmod{\alpha(V)}$
  for any $G$-equivariant map $f:S(V)\to S(W)$.
  \end{coro}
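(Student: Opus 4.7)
My plan is as follows.

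The first claim, $\alpha(V) > 1$, is immediate from \Cref{thm:solvable_groups}, since $V \in \irrnt(G)$ and $G$ is solvable; \Cref{prop:alpha_func}(d) additionally gives $\alpha(W) = \alpha(V)$.

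For the second claim, my strategy is to produce a single equivariant map $\phi : S(V) \to S(W)$ with $\gcd(\deg(\phi), \alpha(V)) = 1$, and then to invoke the congruence principle as formulated in \Cref{coro:maps-represent-solv}. Once such a $\phi$ is in hand, every other equivariant $f : S(V) \to S(W)$ satisfies $\deg(f) \equiv \deg(\phi) \pmod{\alpha(V)}$, which forces $\deg(f) \not\equiv 0 \pmod{\alpha(V)}$. The construction of $\phi$ exploits the Galois equivalence $W \cong \sconj{V}{\sigma}$: writing $\sigma$ on the splitting field $\ff = \bq(\zeta_{\abs{G}})$ as $\sigma : \zeta \mapsto \zeta^k$ with $\gcd(k, \abs{G}) = 1$, my candidate is a $G$-equivariant polynomial ``$k$-th power'' map whose Brouwer degree is $k^{\dim_{\bc} V}$; this is coprime to $\abs{G}$, and hence to $\alpha(V)$, because $\alpha(V)$ divides $\abs{G}$. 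For $V$ one-dimensional (the abelian case) the map is literally $z \mapsto z^k$.

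The main obstacle is producing this ``$k$-th power'' map for higher-dimensional irreducibles together with the degree computation. My plan is to induct on $\abs{G}$: pick a minimal normal subgroup $N \trianglelefteq G$, which is elementary abelian by \Cref{prop:solvable_minimal_normal}; use Clifford theory to decompose $\resrep{V}{N}$ and $\resrep{W}{N}$ into Galois-conjugate $N$-characters, yielding presentations $V \cong \indrep{U}{G}$ and $W \cong \indrep{\sconj{U}{\sigma}}{G}$, where $U$ is an irreducible representation of the stabilizer $H \leq G$ of an $N$-isotypic component; apply the inductive hypothesis to the pair $(H, U)$ to obtain an $H$-equivariant ``$k$-th power'' map $U \to \sconj{U}{\sigma}$; and finally lift it to the induced representations. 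The delicate point is verifying that induction preserves the ``Brouwer degree coprime to $\abs{G}$'' property, i.e.\ that the lift has degree of the form $k^{\dim_{\bc} V}$ (or at least a power of $k$); once this is secured, the argument concludes as above.
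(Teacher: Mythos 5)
The first assertion and the overall logical frame are fine: if you can produce \emph{one} equivariant map $\phi:S(V)\to S(W)$ whose degree is coprime to $\alpha(V)$, then the congruence principle finishes the job, and $\alpha(V)>1$ is indeed immediate from \Cref{thm:solvable_groups}. The gap is in the construction of $\phi$, and it is not merely a technical loose end. Your induction presents $V$ as $\indrep{U}{T}$ where $T$ is the inertia group of an irreducible constituent of $\resrep{V}{N}$ for $N$ a minimal normal subgroup; but Clifford theory only gives $T\lneq G$ when the constituents of $\resrep{V}{N}$ are not all $G$-fixed. If $N$ is central (or, more generally, if $\resrep{V}{N}$ is isotypic with full inertia group), then $T=G$, $U=V$, and the induction does not descend. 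This is unavoidable for some solvable groups: $SL(2,\bz_3)$ has a faithful $2$-dimensional irreducible representation that is not induced from any $1$-dimensional representation of any subgroup (solvable groups need not be monomial), and its minimal normal subgroup is the central $\bz_2$, on which the restriction is isotypic. So no chain of proper inductions reaches your $1$-dimensional base case, and the ``$k$-th power map'' has no construction in that case. (This difficulty is essentially why \Cref{thm:atiyah-tall} is a genuine theorem rather than an elementary induction.) A secondary, fixable issue: even where the induction applies, you must check that the componentwise extension of an admissible $T$-map to the induced module is admissible and multiplies degrees, and that induction commutes with the Galois twist.

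The paper avoids all of this by never constructing a map in the forward direction. Galois equivalence gives $\dim V^H=\dim W^H$ for every $H<G$, which by standard equivariant obstruction theory guarantees the existence of \emph{some} equivariant map $g:S(W)\to S(V)$. For an arbitrary equivariant $f:S(V)\to S(W)$, the composite $g\circ f:S(V)\to S(V)$ is compared with the identity via the congruence principle, yielding $\deg(g)\deg(f)\equiv 1\pmod{\alpha(V)}$; hence $\deg(f)$ is a unit modulo $\alpha(V)$, which is stronger than $\deg(f)\not\equiv 0$. If you want to salvage your approach, you would need either to restrict to monomial groups or to import the full strength of the Atiyah--Tall machinery; the composition trick is the cheaper route.
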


  \begin{proof}
  Take a $G$-equivariant map $f:S(V)\to S(W)$.
  Since $V$ and $W$ are Galois-equivalent,
  one has $\dim V^H=\dim W^H$ for any $H<G$.
  So, there exists a $G$-equivariant map
  $g:S(W)\to S(V)$
  (see, for example, \cite{Bartsch,tomDieck,Kushkuley-Balanov}).
  In this case, $g\circ f:S(V)\to S(V)$ is a $G$-equivariant map
  and, by the congruence principle,
  $\deg(g\circ f)\equiv 1\pmod{\alpha(V)}$.
  Since $\alpha(V)>1$ (see \Cref{thm:solvable_groups}),
  the result follows.
  \end{proof}
  
  \begin{coro}\label{coro:maps-manifold-solv}
  Let $G$ be a solvable group,
  $W$ an $n$-dimensional irreducible
  (complex) $G$-representation and
  $M$ a (real) compact, connected, oriented
  smooth $2n-1$-dimensional $G$-manifold. 
  Assume, in addition,  that 
  \begin{equation}\label{eq:existence}
    \dim_\br M^H\leq\dim_\br W^H-1\mbox{ for any }(H)\in\Phi(G,M).
  \end{equation}
  Then: 
  \begin{enumerate}
  \item[(i)] there exists an equivariant map $f:M\to W\setminus\set{0}$;
  \item[(ii)] $\alpha(M):=\gcd\set{\abs{G(x)}:x\in M}>1$;
  \item[(iii)] for any equivariant map $g:M\to W\setminus\set{0}$, one has
    \begin{align}\label{eq:congruence}
      \deg(g)\equiv\deg(f)\pmod{\alpha(M)}.
    \end{align}
  \end{enumerate}
  \end{coro}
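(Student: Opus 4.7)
The plan is to handle the three conclusions in the natural order (i), (ii), (iii), with (i) coming from equivariant obstruction theory, (ii) from \Cref{thm:solvable_groups} plus a Sylow-theoretic argument using the dimension hypothesis, and (iii) being an immediate consequence of (i), (ii) and the congruence principle quoted in the Introduction.

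For part (i), I would invoke standard equivariant obstruction theory. The hypothesis
\[
\dim_\br M^H \leq \dim_\br W^H - 1 \quad \text{for every } (H)\in\Phi(G,M)
\]
says precisely that on each fixed-point stratum, the target sphere $S(W^H) \subset W^H\setminus\{0\}$ has at least one more real dimension than the source stratum $M^H$, and is therefore sufficiently highly connected (indeed, $(\dim_\br W^H - 2)$-connected) to allow equivariant extension across that stratum. Building $f:M\to W\setminus\{0\}$ by induction over the (finite) poset of orbit types, from the smallest isotropy upwards, the obstruction to extending at each stage vanishes for dimensional reasons; see \cite{tomDieck,Bartsch}.

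For part (ii), the key observation is that \Cref{thm:solvable_groups} applies to $W$: since $G$ is solvable and $W$ is a non-trivial irreducible $G$-representation, $\alpha(W)>1$. Pick a prime $p$ dividing $\alpha(W)$ and let $P\in\syl_p(G)$. By \Cref{lemma:solvable_group1} the condition $p\mid\alpha(W)$ forces $W^P=\{0\}$, and the same holds for every $G$-conjugate of $P$. Now suppose, for contradiction, that $M^P\neq\emptyset$ and pick $x\in M^P$; then $P\leq G_x$, so $(G_x)\in\Phi(G,M)$ and $W^{G_x}\subseteq W^P=\{0\}$. The hypothesis gives $\dim_\br M^{G_x}\leq\dim_\br W^{G_x}-1 = -1$, contradicting $x\in M^{G_x}$. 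Hence $M^P=\emptyset$ for every Sylow $p$-subgroup $P$ of $G$. Consequently, for each $x\in M$ any Sylow $p$-subgroup of $G_x$ is a proper subgroup of a Sylow $p$-subgroup of $G$, so $p$ divides $|G:G_x|=|G(x)|$. Taking the gcd over all $x\in M$ yields $p\mid\alpha(M)$, so $\alpha(M)\geq p>1$.

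For part (iii), with (i) providing an equivariant map $f:M\to W\setminus\{0\}$ and the hypotheses of the corollary matching the setup of the congruence principle recalled in the Introduction (note that $\dim_\br M = 2n-1$ and $\dim_\br W = 2n$, so the dimension requirement is satisfied), we may apply that principle verbatim to conclude $\deg(g)\equiv\deg(f)\pmod{\alpha(M)}$ for every equivariant $g:M\to W\setminus\{0\}$. The main conceptual content of the corollary therefore sits in parts (i) and (ii); the latter is the novel algebraic input, linking solvability of $G$ (via \Cref{thm:solvable_groups} and \Cref{lemma:solvable_group1}) to non-triviality of $\alpha(M)$, and I expect the obstruction-theoretic step in (i) to be the most technical (though entirely standard) piece of the argument.
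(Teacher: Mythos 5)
Your proposal is correct, and parts (i) and (iii) match the paper exactly (existence via the standard obstruction-theoretic extension argument under \eqref{eq:existence}, then a verbatim application of the congruence principle). Where you genuinely diverge is part (ii). The paper's proof of (ii) is shorter and piggybacks on (i): given the equivariant map $f:M\to W\setminus\set{0}$, equivariance gives $G_x\leq G_{f(x)}$ for every $x\in M$, so $\abs{G(f(x))}$ divides $\abs{G(x)}$; since $\alpha(W)>1$ by \Cref{thm:solvable_groups} and $\alpha(W)$ divides $\abs{G(f(x))}$, this yields the stronger divisibility $\alpha(W)\mid\alpha(M)$ in two lines. Your argument instead bypasses $f$ entirely: you extract a single prime $p\mid\alpha(W)$, use \Cref{lemma:solvable_group1} to get $W^P=\set{0}$ for $P\in\syl_p(G)$, and then deduce $M^P=\emptyset$ directly from the dimension hypothesis \eqref{eq:existence}, whence $p\mid\abs{G(x)}$ for all $x$. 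This is valid (and has the mild virtue of making (ii) logically independent of (i), relying only on \eqref{eq:existence}), but it is longer and, as written, only certifies $p\mid\alpha(M)$ rather than $\alpha(W)\mid\alpha(M)$; the paper's orbit-pushforward argument is the more economical route and is worth knowing as the standard trick for transferring $\alpha$-divisibility along equivariant maps.
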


  \begin{proof}
  By condition \eqref{eq:existence},
  there exists an equivariant map  $f:M\to W\setminus\set{0}$
  (see, for example, \cite{tomDieck,Kushkuley-Balanov}). 
  Therefore, $G_{f(x)}\geq G_x$ so that $\abs{G(f(x))}$ divides
  $\abs{G(x)}$ for any $x\in M$.
  Since $\alpha(W)>1$ (see \Cref{thm:solvable_groups}),
  it follows that $\alpha(M)>1$.
  Finally, the congruence principle shows that
  $\eqref{eq:congruence}$ is true.
  \end{proof}


  \begin{remark}\label{rem:possible-extensions}
  By combining \Cref{thm:solvable_groups} with other versions
  of the congruence principle given in \cite{Kushkuley-Balanov},
  one can easily obtain many other results on degrees of
  equivariant maps for solvable groups.
  We leave this task to a reader.
  \end{remark}
  \subsection{Congruence Principle for $S_n$-representations}
  In this subsection, we will study the Brower degree
  of equivariant maps from $S(V^-)$ to $S(V)$,
  where $V$ and $V^-$ are as in \Cref{thm:sn}.
  To this end, we need the following proposition.

  \begin{prop}\label{prop:alpha_V}
  Let $V$ and $V^-$ be $S_n$-modules as in \Cref{thm:sn}
  with $n=p^k>3$ being an odd prime power.  Then:

  \begin{enumerate}
  \renewcommand{\labelenumi}{(\roman{enumi})}
  \item $\alpha(V)=p$;
  \item $\alpha(V^-)=2p$.
  \end{enumerate}
  \end{prop}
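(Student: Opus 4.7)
The plan is to analyze each prime divisor of $\alpha(V)$ and $\alpha(V^-)$ separately via the fixed-point criterion of \Cref{lemma:solvable_group1}(iii), and then pin down the exact $p$- and $2$-adic valuations by exhibiting orbits. For any prime $q \neq p$ with $q \mid |S_n|$, the orbit sizes of a Sylow $q$-subgroup $R$ on $[n]$ are given by the base-$q$ digits of $n = p^k$; since $q \nmid p$ there is always a fixed point and at least two orbits, so $\dim V^R \geq 1$ and $q \nmid \alpha(V)$. For odd $q$ every element of $R$ is a product of odd-length cycles, hence lies in $A_n$, so $\chi_{V^-}|_R = \chi_V|_R$ and the conclusion transfers to $V^-$; for $q=2$, oddness of $n$ gives a fixed point of $Q$, so $2 \nmid \alpha(V)$. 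For the lower bound on the $p$-side, a Sylow $p$-subgroup $P \in \syl_p(S_{p^k})$ is transitive on $[n]$, forcing $V^P = 0$; since $p$ is odd, $P \subset A_n$ and $\chi_{V^-}|_P = \chi_V|_P$, so $p$ divides both $\alpha(V)$ and $\alpha(V^-)$.

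The heart of the proof is showing $2 \mid \alpha(V^-)$. I would compute
\[
  \iprod{\chi_{V^-}|_Q}{\trep{Q}}
  = \frac{1}{|Q|}\Big(\sum_{i \in [n]} \sum_{\sigma \in Q_i} \mathrm{sgn}(\sigma) \;-\; \sum_{\sigma \in Q} \mathrm{sgn}(\sigma)\Big),
\]
and note that the second sum vanishes since $Q \not\subset A_n$. Each inner sum vanishes iff the stabilizer $Q_i$ meets both cosets of $A_n$. Decomposing $Q \cong \prod_j Q^{(j)}$ along its orbits $O_j$ of sizes $2^{a_j}$ (the binary digits of $n$), one has $Q_i = Q^{(j_0)}_i \times \prod_{j \neq j_0} Q^{(j)}$ whenever $i \in O_{j_0}$; any transposition sitting in a factor $Q^{(j)}$ with $a_j \geq 1$ and $j \neq j_0$ then lies in $Q_i$ and is odd. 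The only obstruction arises when $a_{j_0}$ is the unique exponent $\geq 1$ and equals $1$, which forces $n=3$ and is excluded. The hard part will be marshalling this binary-expansion case analysis cleanly; it is precisely the step where the hypothesis $n > 3$ enters.

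For the upper bounds I would exhibit two orbits. Taking $v = (n-p^{k-1})\chi_A - p^{k-1}\chi_{[n]\setminus A}$ with $|A| = p^{k-1}$, the $V$-stabilizer is $S_A \times S_{[n]\setminus A}$ with orbit size $\binom{p^k}{p^{k-1}}$; Kummer's theorem gives $v_p\!\left(\binom{p^k}{p^{k-1}}\right) = 1$, and since the stabilizer contains odd elements while no permutation sends $v$ to $-v$, the $V^-$-orbit is twice as large and still has $v_p = 1$. For the $2$-side, fix $a \geq 1$ at which the binary expansion of $n$ has a $1$, pick a $Q$-orbit $O$ of size $2^a$ together with the unique $Q$-fixed point $\{n\}$, and set $v_0 = \chi_O - 2^a e_n \in V^Q$. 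Since $2^a \neq 1$, the coefficient multisets of $v_0$ and $-v_0$ are incompatible, so $-v_0 \notin S_n \cdot v_0$ and the $V^-$-orbit has size $2 \cdot n!/((2^a)!\,(n - 2^a - 1)!)$; applying Legendre's formula together with the identity $s_2(n - 2^a - 1) = s_2(n) - 2$ (valid for such $a$, as subtracting $1$ from the odd number $n$ clears the zeroth bit and subtracting $2^a$ clears the $a$-th) yields $v_2 = 3 - s_2(n) + (s_2(n) - 2) = 1$. Combining with the lower bounds gives $\alpha(V) = p$ and $\alpha(V^-) = 2p$.
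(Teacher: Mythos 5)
Your proof is correct in substance but follows a genuinely different and considerably longer route than the paper's. The paper gets $p\mid\alpha(V)$ directly from \Cref{lemma:2trans_groups} (the action on $[n]$ is $2$-transitive of prime-power degree), gets $p\mid\alpha(V^-)$ by citing the admissible equivariant map $V^-\to V$ of \Cref{thm:sn} (which forces every $V^-$-orbit length to be a multiple of a $V$-orbit length), and --- most strikingly --- disposes of $2\mid\alpha(V^-)$ with a two-line involution argument: on any orbit $O\subset V^-\setminus\set{0}$ either $(12)$ acts without fixed points, or $O$ contains a vector $x=(a,-a,0,\dots,0)$, in which case $-x\in O$ (here $n>3$ enters) and the fixed-point-free involution $v\mapsto-v$ makes $\abs{O}$ even. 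For the upper bound the paper simply takes the gcd of the two orbit lengths $2n$ and $2\binom{p^k}{p^{k-1}}$, which is already exactly $2p$, so no preliminary elimination of other primes is needed. Your Sylow-by-Sylow analysis buys self-containedness: you never invoke \Cref{thm:sn}, and you prove directly that no prime other than $2$ and $p$ divides either invariant, so a single orbit per prime suffices for the upper bounds. The price is that all the difficulty is concentrated in the $q=2$ computation for $V^-$, which you leave as an acknowledged sketch: in the case $n=2^a+1$ with $a\ge 2$ (which includes $n=5$ and $n=9$, squarely within the scope of the proposition) you still must verify that the point stabilizer of the large orbit inside the iterated wreath product contains an odd permutation --- it contains the full second wreath factor, hence a transposition, precisely because $a\ge 2$. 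That step is fillable and your identification of $n=3$ as the only obstruction is right, but it is exactly the work the paper's involution trick avoids. Finally, you could spare yourself the Legendre-formula computation for the $2$-adic bound by using the orbit of $(n-1,-1,\dots,-1)$, whose $V^-$-length is $2n$ and already has $2$-adic valuation $1$; this is in effect what the paper does.
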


  \begin{proof}
  (i) According to \Cref{lemma:2trans_groups},
  one has that $p$ divides $\alpha(V)$.
  Hence, it suffices to show that $V\setminus\set{0}$ admits two $S_n$-orbits,
  say $O_1$ and $O_2$, such that $\gcd\set{\abs{O_1},\abs{O_2}}=p$.
  Indeed, let $O_1=S_n(x)$ and $O_2=S_n(y)$ be two $S_n$-orbits
  in $V\setminus\set{0}$ with
  \begin{align}
    \label{eq:x}
    x=\;&(n-1,\underbrace{-1,\dots,-1}_{n-1}),\\
    \label{eq:y}
    y=\;&(\underbrace{p-1,\dots,p-1}_{p^k},
        \underbrace{-1,\dots,-1}_{(p-1)p^{k-1}}).
  \end{align}
  Then, $\abs{O_1}=p^k$, $\abs{O_2}={p^k\choose p^{k-1}}$,
  from which it follows that $\gcd\set{\abs{O_1},\abs{O_2}}=p$.\\

  \noindent
  (ii)
  Since there exists an admissible
  equivariant map from $V^-$ to $V$ (see \Cref{thm:sn}),
  one has that $p$ divides $\alpha(V^-)$.
  Hence, it suffices to show that

  \begin{enumerate}
  \renewcommand{\labelenumi}{(\alph{enumi})}
  \item any $S_n$-orbit in $V^-\setminus\set{0}$ is of even length;
  \item $V^-\setminus\set{0}$ admits two $S_n$-orbits, say $O_1$ and $O_2$,
    such that $\gcd\set{\abs{O_1},\abs{O_2}}=2p$.
  \end{enumerate}

  For (a), take an $S_n$-orbit $O\subset V^-\setminus\set{0}$.
  If the transposition $(12)$ acts on $O$ without fixed points,
  then $\abs{O}$ is even.
  Otherwise, let $x$ be a vector fixed by $(12)$.
  Then, $x=(a,-a,0,\dots,0)$ for some $a\neq0$ (see \eqref{eq:Vminus_action}).
  Since $n>3$, one has $-x\in O$, from which it follows that $-O=O$.
  Thus, the involution $x\mapsto-x$ acting on $O$ is
  without fixed points, which implies that $\abs{O}$ is even
  (note that this argument does not work when $n=3$,
  in which case $\abs{O}=3$).

  For (b), let $O_1=S_n(x)$ and $O_2=S_n(y)$ be two $S_n$-orbits in
  $V^-\setminus\set{0}$ with $x$ and $y$ given in
  \eqref{eq:x} and \eqref{eq:y}, respectively.
  Observe that $-x\in O_1$ and $-y\in O_2$ (by transposing two
  $-1$ components), from which it follows that $\abs{O_1}=2p$ and
  $\abs{O_2}=2{p^k\choose p^{k-1}}$.
  Therefore, $\gcd\set{\abs{O_1},\abs{O_2}}=2p$.
  \end{proof}

  Combining the congruence principle and \Cref{prop:alpha_V} yields:
  
  \begin{coro}\label{coro:sn}
  Suppose that $n=p^k>3$ is an odd prime power.
  For any $S_n$-equivariant map $\Psi:S(V^-)\to V\setminus\set{0}$,
  \begin{align*}
    \deg(\Psi)\equiv2^{n-1}\pmod{2p}
  \end{align*}
  (in particular, $\deg(\Psi)\neq0$).
  \end{coro}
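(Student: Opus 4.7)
The plan is to apply the congruence principle to a specific canonical equivariant map supplied by \Cref{thm:sn}, with the modulus computed by \Cref{prop:alpha_V}. More precisely, \Cref{thm:sn} produces an admissible $2$-homogeneous $S_n$-equivariant map $\Phi\colon V^-\to V$. Since $\Phi$ is admissible, its restriction to $S(V^-)$ lands in $V\setminus\{0\}$, so the Brouwer degree $\deg(\Phi|_{S(V^-)})$ is well defined, and both $\Phi$ and any given $\Psi\colon S(V^-)\to V\setminus\{0\}$ qualify as equivariant maps in the setting of the congruence principle stated in the Introduction (with $M=S(V^-)$ and $W=V$, both of real dimension $2(n-1)$ as required).

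The first step is to identify $\deg(\Phi)$. Because $\Phi$ is $2$-homogeneous and admissible between complex vector spaces of the same complex dimension $n-1$, the remark following \Cref{prop:null} gives $\deg(\Phi)=2^{n-1}$; alternatively, one can verify this directly by observing that $\Phi$ extends to a proper map $V^-\to V$ whose generic fiber over a regular value has cardinality $2^{n-1}$ (counted with sign, all $+1$ by complex orientability).

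The second step is to compute the modulus. By \Cref{prop:alpha_V}(ii), $\alpha(V^-)=\alpha(S_n,S(V^-))=2p$. Applying the congruence principle to the pair $(\Phi,\Psi)$ then yields
\begin{equation*}
  \deg(\Psi)\equiv\deg(\Phi)=2^{n-1}\pmod{2p}.
\end{equation*}

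The final step is the non-vanishing claim. Since $n=p^k$ with $p$ an odd prime, we have $\gcd(2p,2^{n-1})=2$; in particular $2p\nmid 2^{n-1}$, so $2^{n-1}\not\equiv 0\pmod{2p}$ and therefore $\deg(\Psi)\neq 0$. I do not anticipate any real obstacle here: everything of substance has been done in \Cref{thm:sn} and \Cref{prop:alpha_V}, and the only thing to double-check is the normalization giving $\deg(\Phi)=2^{n-1}$, which is a standard consequence of the complex-homogeneous structure and is already invoked in the discussion preceding \Cref{example:q8}.
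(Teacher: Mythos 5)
Your proposal is correct and follows essentially the same route as the paper: take the admissible $2$-homogeneous equivariant map $\Phi$ from \Cref{thm:sn}, note $\deg(\Phi)=2^{n-1}$ and $\alpha(V^-)=2p$ from \Cref{prop:alpha_V}, and apply the congruence principle. The extra justifications you supply (the degree of a complex $2$-homogeneous admissible map, and $2p\nmid 2^{n-1}$ for odd $p$) are sound and merely make explicit what the paper leaves implicit.
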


  \begin{proof}
  Let $\Phi:S(V^-)\to V\setminus\set{0}$ be the
  $2$-homogeneous equivariant map provided by \Cref{thm:sn}.
  Since $\alpha(S_n,V^-)=2p$ (see \Cref{prop:alpha_V})
  and $\deg(\Phi)=2^{n-1}$, it follows from the congruence principle that
  \begin{align*}
    \deg(\Psi)\equiv\deg(\Phi)=2^{n-1}\pmod{2p}.
  \end{align*}
  \end{proof}


%
%
    
  \clearpage
  \appendix
  \section{Tables}
    \begin{table}[h!]
    \centering
    \begin{tabularx}{\textwidth}{@{}LCCCYYYYY@{}}
      \toprule
      \multirow{2}{*}{$\rho$} & \multirow{2}{*}{\quad$\alpha(\rho)$\quad} &
          \multicolumn{2}{c}{(2-transitivity)} &
          \multicolumn{5}{c}{\multirow{2}{*}{character}} \\
      & & H\quad & \subgind{G}{H}\quad & & & & & \\
      \midrule
      \psi_0 & 1 & & & 1 & 1 & 1 & 1 & 1 \\
      \psi_1 & 2 & & & 3 & -1 & . &
          \frac{1-\sqrt{5}}{2} &
          \frac{1+\sqrt{5}}{2} \\
      \psi_2 & 2 & & & 3 & -1 & . &
          \frac{1+\sqrt{5}}{2} &
          \frac{1-\sqrt{5}}{2} \\
      \psi_3 & 5 & A_4 & 5 & 4 & . & 1 & -1 & -1 \\
      \psi_4 & 1 & D_5 & 6 & 5 & 1 & -1 & . & . \\
      \bottomrule
    \end{tabularx}
    \caption{character table of $G=A_5$}
    \label{tbl:a5_chartbl}
  \end{table}

    \begin{table}[h!]
    \centering
    \begin{tabularx}{\textwidth}{@{}LCCCYYYYYYY@{}}
      \toprule
      \multirow{2}{*}{$\rho$} & \multirow{2}{*}{\quad$\alpha(\rho)$\quad} &
          \multicolumn{2}{c}{(2-transitivity)} &
          \multicolumn{7}{c}{\multirow{2}{*}{character}} \\
      & & H\quad & \subgind{G}{H}\quad & & & & & & & \\
      \midrule
      \xi_1 & 2 & A_5 & 2 &
          1 & -1 &  1 &  1 & -1 & -1 &  1 \\
      \xi_2 & 10 & & &
          4 & -2 &  . &  1 &  1 &  . & -1 \\
      \xi_3 & 1 & \aff(\bz_5) & 6 &
          5 & -1 &  1 & -1 & -1 &  1 &  . \\
      \xi_4 & 2 & & &
          6 &  . & -2 &  . &  . &  . &  1 \\
      \xi_5 & 1 & & &
          5 &  1 &  1 & -1 &  1 & -1 &  . \\
      \xi_6 & 5 & S_4 & 5 &
          4 &  2 &  . &  1 & -1 &  . & -1 \\
      \xi_0 & 1 & & &
          1 &  1 &  1 &  1 &  1 &  1 &  1 \\
      \bottomrule
    \end{tabularx}
    \caption{character table of $G=S_5$}
    \label{tbl:s5_chartbl}
  \end{table}

    \begin{table}[h!]
    \centering
    \begin{tabularx}{\textwidth}{@{}CCCYYYYYYYYYYY@{}}
      \toprule
      \rho_i=\rho_{(G;X_i)}^a & \alpha(\rho_i) & \abs{X_i} & \multicolumn{11}{c}{character} \\
      \midrule
      \rho_1 & 7 & 7 & 6 & 6 & 2 & 2 & 2 & . & . & . & . & -1 & -1 \\
      \rho_2 & 2 & 2^3 & 7 & -1 & 3 & -1 & -1 & 1 & -1 & 1 & -1 & . & . \\
      \rho_3 & 2 & 2^3 & 7 & 7 & -1 & -1 & -1 & -1 & -1 & 1 & 1 & . & . \\
      \rho_4 & 2 & 2^3 & 7 & -1 & -1 & 3 & -1 & -1 & 1 & 1 & -1 & . & . \\
      \bottomrule
    \end{tabularx}
    \caption{irreducible $(V\rtimes\gl(3,2))$-representations
        associated to 2-transitive actions}
    \label{tbl:gl_chartbl}
  \end{table}

    \begin{table}[h!]
    \centering
    \begin{tabularx}{\textwidth}{@{}LCYYYYYYYYY@{}}
      \toprule
      \rho & \alpha(\rho)\quad &
          \multicolumn{9}{c}{character} \\
      \midrule
      \psi_0 & 1 & 1 &  1 &  1 &  1 &  1 &  1 & 1 & 1 & 1 \\
      \psi_1 & 2 & 7 & -1 & -2 &  1 &  1 &  1 & . & . & . \\
      \psi_2 & 2 & 7 & -1 &  1 &  A &  C &  B & . & . & . \\
      \psi_3 & 2 & 7 & -1 &  1 &  B &  A &  C & . & . & . \\
      \psi_4 & 2 & 7 & -1 &  1 &  C &  B &  A & . & . & . \\
      \psi_5 & 3 & 8 &  . & -1 & -1 & -1 & -1 & 1 & 1 & 1 \\
      \psi_6 & 1 & 9 &  1 &  . &  . &  . &  . & D & F & E \\
      \psi_7 & 1 & 9 &  1 &  . &  . &  . &  . & E & D & F \\
      \psi_8 & 1 & 9 &  1 &  . &  . &  . &  . & F & E & D \\
      \bottomrule
    \end{tabularx}
    \caption{character table of $\mathrm{PSL}(2,8)$}
    \label{tbl:psl28_chartbl}
  \end{table}

    \begin{table}[h!]
    \centering
    \begin{tabularx}{\textwidth}{@{}LCYYYYYYYYYYY@{}}
      \toprule
      \rho & \alpha(\rho)\quad &
          \multicolumn{11}{c}{character} \\
      \midrule
      \xi_0    & 1 &  1 &  1 &  1 &  1 &   1 &   1 &    1 &    1 &  1 &    1 &    1 \\
      \xi_1    & 3 &  1 &  1 &  1 &  1 &   A & A^* &  A^* &    A &  1 &    A &  A^* \\
      \xi_2    & 3 &  1 &  1 &  1 &  1 & A^* &   A &    A &  A^* &  1 &  A^* &    A \\
      \xi_3    & 2 &  7 & -2 &  1 & -1 &   1 &   1 &   -1 &   -1 &  . &    1 &    1 \\
      \xi_4    & 6 &  7 & -2 &  1 & -1 & A^* &   A &   -A & -A^* &  . &  A^* &    A \\
      \xi_5    & 6 &  7 & -2 &  1 & -1 &   A & A^* & -A^* &   -A &  . &    A &  A^* \\
      \xi_6    & 3 &  8 & -1 & -1 &  . &   2 &   2 &    . &    . &  1 &   -1 &   -1 \\
      \xi_7    & 3 &  8 & -1 & -1 &  . &   B & B^* &    . &    . &  1 &   -A & -A^* \\
      \xi_8    & 3 &  8 & -1 & -1 &  . & B^* &   B &    . &    . &  1 & -A^* &   -A \\
      \xi_9    & 2 & 21 &  3 &  . & -3 &   . &   . &    . &    . &  . &    . &    . \\
      \xi_{10} & 1 & 27 &  . &  . &  3 &   . &   . &    . &    . & -1 &    . &    . \\
      \bottomrule
    \end{tabularx}
    \caption{character table of $\mathrm{P\Gamma L}(2,8)$}
    \label{tbl:pgammal28_chartbl}
  \end{table}

  \clearpage

  \section{Figures}\
    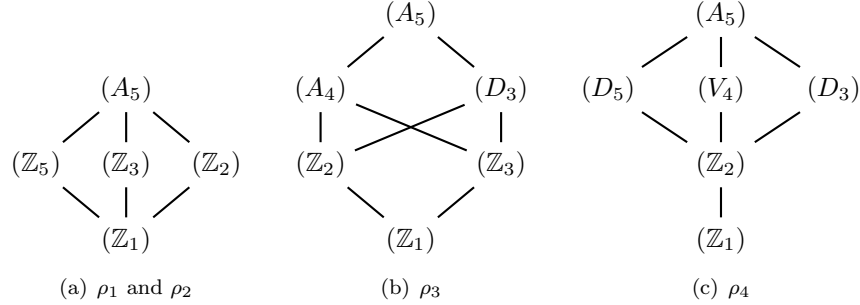
\begin{figure}[h!]
    \centering
    \subfigure[$\rho_1$ and $\rho_2$]{\begin{tikzpicture}
      \node (A5) at (0,2) {$(A_5)$};
      \node (Z5) at (-1.2,1) {$(\bz_5)$};
      \node (Z3) at (0,1) {$(\bz_3)$};
      \node (Z2) at (1.2,1) {$(\bz_2)$};
      \node (Z1) at (0,0) {$(\bz_1)$};

      \draw[thick] (A5)--(Z5);
      \draw[thick] (A5)--(Z3);
      \draw[thick] (A5)--(Z2);
      \draw[thick] (Z5)--(Z1);
      \draw[thick] (Z3)--(Z1);
      \draw[thick] (Z2)--(Z1);
    \end{tikzpicture}}
    \quad
    \subfigure[$\rho_3$]{\begin{tikzpicture}
      \node (A5) at (0,3) {$(A_5)$};
      \node (A4) at (-1.2,2) {$(A_4)$};
      \node (D3) at (1.2,2) {$(D_3)$};
      \node (Z2) at (-1.2,1) {$(\bz_2)$};
      \node (Z3) at (1.2,1) {$(\bz_3)$};
      \node (Z1) at (0,0) {$(\bz_1)$};

      \draw[thick] (A5)--(A4);
      \draw[thick] (A5)--(D3);
      \draw[thick] (A4)--(Z2);
      \draw[thick] (A4)--(Z3);
      \draw[thick] (D3)--(Z2);
      \draw[thick] (D3)--(Z3);
      \draw[thick] (Z2)--(Z1);
      \draw[thick] (Z3)--(Z1);
    \end{tikzpicture}}
    \quad
    \subfigure[$\rho_4$]{\begin{tikzpicture}
      \node (A5) at (0,3) {$(A_5)$};
      \node (D5) at (-1.5,2) {$(D_5)$};
      \node (D3) at (1.5,2) {$(D_3)$};
      \node (V4) at (0,2) {$(V_4)$};
      \node (Z2) at (0,1) {$(\bz_2)$};
      \node (Z1) at (0,0) {$(\bz_1)$};

      \draw[thick] (A5)--(D5);
      \draw[thick] (A5)--(D3);
      \draw[thick] (A5)--(V4);
      \draw[thick] (D5)--(Z2);
      \draw[thick] (D3)--(Z2);
      \draw[thick] (V4)--(Z2);
      \draw[thick] (Z2)--(Z1);
    \end{tikzpicture}}
    \caption{lattices of orbit types of $\rho\in\irrnt(A_5)$}
    \label{fig:a5_irr_lat_orbtypes}
  \end{figure}




  \bibliographystyle{abbrv}
  \bibliography{congruence_principle}

  \end{document}